\documentclass[12pt]{article}
\usepackage{amssymb,amsmath}
\usepackage[numbers,sort&compress]{natbib}
\def\a{\alpha}
\def\b{\beta}
\def\d{\delta}

\def\l{\lambda}

\def\ssc{\scriptscriptstyle}

\def\cl{\centerline}

\def\vs{\vspace*}

\def\Z{\mathbb{Z}{\ssc\,}}

\def\C{\mathbb{C}{\ssc\,}}

\numberwithin{equation}{section}
\newtheorem{theo}{Theorem}[section]
\newtheorem{coro}[theo]{Corollary}
\newtheorem{defi}[theo]{Definition}
\newtheorem{rema}[theo]{Remark}
\newtheorem{lemm}[theo]{Lemma}
\newtheorem{prop}[theo]{Proposition}

\def\adddot{$\!\!\!${\bf.}\ \ }

\begin{document}
\cl{{\Large \bf
Simple Restricted modules for $N=2$ Neveu-Schwarz algebra}} \vs{6pt}

\cl{Yuezhu Wu, Wei Jiang, Yan He}

 \cl{\small School of Mathematics and statistics, Suzhou University of Technology, Jiangsu 215500, China}

 \cl{\small E-mails: yuezhuwu@szut.edu.cn, \ jiangwei@szut.edu.cn, \ 202000001@szut.edu.cn} \vs{6pt}

 \vs{6pt}

{\small
\parskip .005 truein
\baselineskip 3pt \lineskip 3pt

\noindent{{\bf Abstract.} In this paper, we first construct many simple restricted modules for the $N=2$ Neveu-Schwarz algebra $\mathcal{L}$. These modules contain the highest weight modules and Whittaker modules. Then we   precisely characterize the simple restricted modules over $\mathcal{L}$ under certain conditions,  and establish several equivalent conditions for a simple \(\mathcal{L}\)-module to be restricted. Finally, we prove that if one specific positive root element of $\mathcal{L}$ acts locally finitely on a simple $\mathcal{L}$-module $V$, then $V$  must be   restricted.

\vs{5pt}

\noindent{\bf Key words:} $N=2$ Neveu-Schwarz algebra, Restricted modules, Simple modules.}

\noindent{\it Mathematics Subject Classification (2000):} 17B65,
17B68, 17B70.}
\parskip .001 truein\baselineskip 8pt \lineskip 8pt

\vs{6pt}

\section{Introduction}

Super-Virasoro algebras, i.e., superconformal algebras may be viewed as a nontrivial $\Z_2$-graded extension of the
Virasoro algebra and their roots in theoretical physics can be traced in the 1970's \cite{ABdN}. Virasoro algebras and super-Virasoro algebras supply the underlying symmetries of string theory. These strings seem to be related to M-theory.
The $N=2$ superconformal algebras, constructed independently by Kac \cite{K1} and by Ademollo {\it et
al.}\cite{ABdN}, whose structure and representation theories were investigated in a series of papers (e.g., Refs.
\cite{CDLP1,CDLP2,CSX, DG,FJS,FSST,FST,Io,IK,Kir,KS,KV,LPX, LSZ,ST,YS,YYX}, etc.), have attracted more and more attentions in recent years. By choosing the
standard Virasoro generators, one observes that the  $N=2$ superconformal algebras fall into the following
four types: (i) the Neveu-Schwarz algebra resulting from antiperiodic boundary conditions, (ii) the
Ramond algebra corresponding to periodic boundary conditions, (iii) the topological $N=2$ algebra
being as the symmetry algebra of topological conformal field theory, and (iv) the twisted superconformal algebra possessing mixed boundary conditions. It is known that the first three superconformal algebras are isomorphic to each other.

It is well known that highest weight modules and Whittaker modules are most popular
weight modules and non-weight modules respectively, for many infinite-dimensional Lie (super)algebras such as Virasoro algebras, super-Virasoro algebras and other related (super)algebras.
Since highest weight modules and Whittaker modules have a same property: the actions of elements in the positive part of the (super)algebra are locally finite, thus it is natural to
investigate such class of modules in a uniform way.  Such modules are referred to as restricted modules (or smooth modules), which have been investigated for the Virasoro algebra \cite{MZ2,MNTZ},
 the Heisenberg-Virasoro algebras
 \cite{CG, G, GXZ}, affine Virasoro algebra \cite{GXZ, MNTZ}, Neveu-Schwarz algebra \cite{LPX20}, twisted $N=2$ superconformal algebra \cite{CSX}, etc.
In the present paper, we study the simple restricted modules for the $N=2$ Neveu-Schwarz algebra.

The paper is organized as follows. In Section 2, we recall some notations and collect
some known results related to the $N=2$ Neveu-Schwarz algebra $\mathcal{L}$ for later use. In Section
3, we use simple modules over some subalgebras of $\mathcal{L}$ to construct simple restricted $\mathcal{L}$-modules. These modules contain the highest weight modules and Whittaker modules. In Section 4, we give  precise characterizations
of the simple restricted modules over $\mathcal{L}$ under certain conditions (see Theorem \ref{MainTh} and Theorem \ref{local}), and  give several equivalent conditions for a simple $\mathcal{L}$-module to be  restricted.  Finally, we  prove that if $V$ is a simple $\mathcal{L}$-module and $L_k$ acts locally finitely on  $V$ for a fixed positive integer $k$, then $V$  must be  a restricted module  (see Theorem \ref{last}).


Throughout this paper, denote by $\Z$ and $\C$ the sets of integers and complex numbers, respectively. Denote by $\Z_{\ge i}$ (resp. $\Z_{>i}$) the set of all integers $k$ such that $k\ge i$ (resp. $k>i$). For a Lie superalgebra $\mathfrak{a}$, denote by $U(\mathfrak{a})$  the universal enveloping algebra of $\mathfrak{a}$. All vector spaces and algebras are over $\C$.

\section{Preliminaries}
In this section, we recall some definitions and notations for later use.
\begin{defi}\adddot
 The $N=2$ Neveu-Schwarz algebra $\mathcal{L}:=\mathcal{L}_{\bar 0}\oplus \mathcal{L}_{\bar 1}$ is the infinite-dimensional Lie superalgebra
 whose even part is spanned by $\{L_m,T_n, C\,|\,m,n\in\Z\}$,  whose odd part is spanned by
 $\{G_{r+\frac{1}{2}}, Q_{s+\frac{1}{2}}\,|\,r,s\in\Z\}$, and  whose defining relations  are given by
\begin{eqnarray*}\label{NS}
&&[L_m,L_n]=(m-n)L_{m+n}+\frac{C}{12}(m^3-m)\d_{m+n,0},\\
&&[L_m,T_n]=-nT_{m+n},\\
&&[T_m,T_n]=\frac{C}{3}m\d_{m+n,0},\\
&&[L_m,G_{r+\frac{1}{2}}]=(\frac{m}{2}-r-\frac{1}{2})G_{m+r+\frac{1}{2}},\quad [L_m,Q_{r+\frac{1}{2}}]=(\frac{m}{2}-r-\frac{1}{2})Q_{m+r+\frac{1}{2}},\\
&&[T_m,G_{r+\frac{1}{2}}]=G_{m+r+\frac{1}{2}},\quad [T_m,Q_{r+\frac{1}{2}}]=-Q_{m+r+\frac{1}{2}},\\
&&[G_{m+\frac{1}{2}},Q_{n+\frac{1}{2}}]=2L_{m+n+1}+(m-n)T_{m+n+1}+\frac{C}{3}\left((m+\frac{1}{2})^2-\frac{1}{4}\right)\d_{m+n+1,0},\\
&&[\mathcal{L},C]=0=[G_{m+\frac{1}{2}},G_{n+\frac{1}{2}}]=[Q_{m+\frac{1}{2}},Q_{n+\frac{1}{2}}].
\end{eqnarray*}
\end{defi}

Obviously, $\mathcal{L}$ is $\frac{1}{2}\Z$-graded, and the Cartan subalgebra  is $\eta: = {span}\{L_0, T_0, C\}$. One can easily
 see that $\mathcal{L}$ contains the Virasro algebra $\mathcal{V}:={span}\{L_i, C \,|\,i\in \Z\}$,
 the Heisenberg algebra $\mathcal{H}={span}\{T_n,C\,|\,n\in \Z\}$, and the Heisenberg-Virasoro algebra $\mathcal{HV}=\mathcal{V}+\mathcal{H}.$
Set $\mathcal{L}^{+}=\sum\limits_{i>0}(\C L_i+\C T_i)+\sum\limits_{j\ge 0}(\C G_{j+\frac{1}{2}}+\C Q_{j+\frac{1}{2}})$, $\mathcal{L}^{-}=\sum\limits_{i<0}(\C L_i+\C T_i+\C G_{i+\frac{1}{2}}+\C Q_{i+\frac{1}{2}})$. Then we have the triangular decomposition:
$$\mathcal{L}=\mathcal{L}^- \oplus \eta \oplus \mathcal{L}^+.$$

 \begin{defi} Let $\mathfrak{g}=\oplus_{i\in \frac{1}{2}\Z}\mathfrak{g}_i$ be a Lie superalgebra. A $\mathfrak{g}$-module $V$ is called a
 {\bf restricted module} if  for any $v\in V$ there exists $n\in\Z_{\ge 1}$ such that $\mathfrak{g}_iv=0$ for all  $i>n$.
 \end{defi}

\begin{defi} Let $\mathfrak{g}$ be a Lie superalgebra and $V$ a $\mathfrak{g}$-module. Let $x\in \mathfrak{g}$,
if for any $v\in V$ there exists $n\in \Z_{\ge 1}$ such that $x^nv=0$, then we say that  $x$ acts
 {\bf  locally nilpotently}  on $V$. We say that $\mathfrak{g}$ acts {\bf locally nilpotently} on $V$
if for any $v\in V$ there exists $n\in \Z_{\ge 1}$ such that $x_1x_2\cdots x_nv=0$ for any $x_1,x_2,\cdots, x_n\in \mathfrak{g}$.
\end{defi}

\begin{defi} Let $\mathfrak{g}$ be a Lie superalgebra and $V$ a $\mathfrak{g}$-module. Let $x\in \mathfrak{g}$,
if for any $v\in V$ we have ${ dim}(\sum_{n\in\Z_{\ge 0}}\C x^nv)<\infty$,  then we say that  $x$ acts
{\bf  locally finitely} on $V$. We say  that  $\mathfrak{g}$  acts {\bf locally finitely} on $V$
if for any $v\in V$  there exists a finite-dimensional $\mathfrak{g}$-submodule $W$ of $V$ such that $v\in W$.
\end{defi}

\begin{rema}It is clearly that $x$ acts locally nilpotently on $V$ is stronger than $x$ acts locally finitely on $V$. If $\mathfrak{g}$ is a finitely generated Lie superalgebra, then
 $\mathfrak{g}$ acts locally nilpotently on $V$ implies that $\mathfrak{g}$ acts locally finitely on $V$.
 But it is not true in general. Clearly, the $N=2$ Neveu-Schwarz algebra $\mathcal{L}$ is a finitely generated Lie superalgebra.
\end{rema}

For latter use, we introduce some subalgebras of $\mathcal{L}$. Let
$$\mathfrak{b}=\sum\limits_{i\in\Z_{\ge 0}}(\C L_i+\C T_{i}+\C G_{i+\frac{1}{2}}+\C Q_{i+\frac{1}{2}})+\C C.$$
For $d\in \Z_{\ge 0}$, $p,q\in\Z$ with $p+q+3\ge  d$, let
$$\mathcal{L}_{d,p,q}=\sum\limits_{i\in\Z_{\ge 0}}(\C L_i+\C T_{i+d}+\C G_{i+p+\frac{3}{2}}+\C Q_{i+q+\frac{3}{2}})+\C T_0+\C C.$$
When $d=0$, denote $\mathcal{L}_{d,p,q}$ by $\mathcal{L}_{0,p,q}$.
Clearly, $\mathfrak{b}$, $\mathcal{L}_{d,p,q}$ and $\mathcal{L}_{0,p,q}$  are  subalgebras of $\mathcal{L}$.

Let $\mathfrak{a}\in\{\mathfrak{b},\mathcal{L}_{d,p,q},\mathcal{L}_{0,p,q}\}$ be fixed. For any $\mathfrak{a}$-module $V$ and scalar $\ell\in\C$,  define the the  induced module
$Ind(V)=U(\mathcal{L})\otimes_{U(\mathfrak{a})}V,$ and let
$$Ind_{\ell}(V):=Ind(V)/(C-\ell)Ind(V).$$

Let  $\mathbb{S}_0$  be  the set of all (infinite) vectors of the form ${\bf i}:=(\ldots, i_2,i_1)$ with entries in $\Z_{\ge 0}$ such that the number of nonzero entries is finite. Let
$\mathbb{S}_1:=\{{\bf{i}}\in \mathbb{S}_0\,|\,i_k=0,1, \ k\in \Z_{\ge 1}\}$.
Let ${\bf 0}$ denote the element $(\ldots,0,0)\in \mathbb{S}_0$. For $i\in\Z_{\ge 1}$, let $\epsilon_i$ denote the element $(\ldots,0,1,0,\ldots,0)\in\mathbb{S}_0$, where $1$ is
in the $i$th position from the right.
For any ${\bf i, j}\in \mathbb{S}_0$ and ${\bf m,n}\in \mathbb{S}_1$, we denote
\begin{eqnarray*}
&&Q^{\bf n}G^{\bf m}T^{\bf j}L^{\bf i}:=\cdots Q_{-2+\frac{1}{2}}^{n_{2}} Q_{-1+\frac{1}{2}}^{n_{1}}\cdots G_{-2+\frac{1}{2}}^{m_{2}} G_{-1+\frac{1}{2}}^{m_{1}}
\cdots T_{-2}^{j_{2}} T_{-1}^{j_{1}}\cdots L_{-2}^{i_{2}} L_{-1}^{i_{1}}\in U({\mathcal{L}}),\\
&&{\bf w(n,m,j,i)}=\sum\limits_{t\in\Z_{\ge 1}}n_t\cdot (t-\frac{1}{2})+\sum\limits_{t\in\Z_{\ge 1}}m_t\cdot (t-\frac{1}{2})+\sum\limits_{t\in\Z_{\ge 1}}t\cdot j_t+\sum\limits_{t\in\Z_{\ge 1}}t\cdot i_t,\\
&& {\bf d(n,m,j,i)}=\sum\limits_{t\in\Z_{\ge 1}} (n_t+m_t+j_t+i_t).
\end{eqnarray*}
Clearly, ${\bf w(n,m,j,i)}$ and ${\bf d(n,m,j,i)}$ are  nonnegative integers.
${\bf w(n,m,j,i)}$ is called the length of ${\bf (n,m,j,i)}$ (or the length of $ Q^{\bf n}G^{\bf m}T^{\bf j}L^{\bf i}$, denoted by ${\bf w}\left(Q^{\bf n}G^{\bf m}T^{\bf j}L^{\bf i}\right)$);  ${\bf d(n,m,j,i)}$ is called the degree of  ${\bf (n,m,j,i)}$ (or the degree of $Q^{\bf n}G^{\bf m}T^{\bf j}L^{\bf i}$, denoted by  ${\bf d}\left(Q^{\bf n}G^{\bf m}T^{\bf j}L^{\bf i}\right)$).

Denote by $<$ the reverse lexicographic total order on $\mathbb{S}_0$ (resp. $\mathbb{S}_1$), which is defined recursively  as follows (see \cite{MZ1,LPX20,G}): $\bf 0$ is the minimum element; and for any different nonzero ${\bf i,j}\in \mathbb{S}_0$ (resp. $\mathbb{S}_1)$, ${\bf i}<{\bf j}$ if and only if the following conditions are satisfied:

(1) $\min\{s\,|\,i_s\ne 0\}>\min\{s\,|\,j_s\ne 0\}$;

(2) $\min\{s\,|\,i_s\ne 0\}=\min\{s\,|\,j_s\ne 0\}=k$ and ${\bf i}-\epsilon_k<{\bf j}-\epsilon_k.$

Similarly, one can  define the reverse lexicographic total order on $\Z^m_{\ge 0}$ and $\Z_2^m$, where $\Z_2=\{0,1\}$ and $ m\in\Z_{\ge 1}.$

Now define the principal total order $\succ $ on $\mathbb{S}_1\times \mathbb{S}_1\times\mathbb{S}_0\times \mathbb{S}_0$ as follows: for different ${\bf (n,m,j,i)},{\bf (n',m',j',i')}\in \mathbb{S}_1\times \mathbb{S}_1\times\mathbb{S}_0\times \mathbb{S}_0$, ${\bf (n,m,j,i)}\succ {\bf (n',m',j',i')}$ if and only if the following conditions are satisfied:

(1) $ {\bf w(n,m,j,i)}>{\bf w(n',m',j',i')}$;

(2) $ {\bf w(n,m,j,i)} = {\bf w(n',m',j',i')}$ and ${\bf d(n,m,j,i)}>{\bf d(n',m',j',i')}$;

(3) $ {\bf w(n,m,j,i)} = {\bf w(n',m',j',i')},$ ${\bf d(n,m,j,i)}={\bf d(n',m',j',i')}$, and ${\bf i}>{\bf i'}$;

(4)  $ {\bf w(n,m,j,i)} = {\bf w(n',m',j',i')},$ ${\bf d(n,m,j,i)}={\bf d(n',m',j',i')}$, ${\bf i}={\bf i'}$, and ${\bf j}>{\bf j'}$;

(5) $ {\bf w(n,m,j,i)} = {\bf w(n',m',j',i')},$  ${\bf d(n,m,j,i)}={\bf d(n',m',j',i')}$, ${\bf i}={\bf i'}$, ${\bf j}={\bf j'}$, and ${\bf m}>{\bf m'}$;

(6) $ {\bf w(n,m,j,i)} = {\bf w(n',m',j',i')},$ ${\bf d(n,m,j,i)}={\bf d(n',m',j',i')}$, ${\bf i}={\bf i'}$, ${\bf j}={\bf j'}$, ${\bf m}={\bf m'}$, and ${\bf n}>{\bf n'}$.

Similarly, one can define the principal total order on $\Z_2^m\times\Z_2^n$, where $m,n\in\Z_{\ge 1}$.

\section{Construction of simple restricted modules}
 In this section, we will construct some simple restricted modules for the $N=2$  Neveu-Schwarz algebra $\mathcal{L}$.
\begin{lemm}\label{Tb-irr}
Let $S$ be a simple $\mathfrak{b}$-module. Assume that there exists $s\in\Z_{\ge 1}$ such that $T_s$ acts injectively on $S$, and $L_iS=T_jS=G_{i+\frac{1}{2}}S=Q_{i+\frac{1}{2}}S=0$ for all $i\ge s,j>s$.
Then for all $\ell\in\C$, $Ind_\ell(S)$ is a simple $\mathcal{L}$-module.
\end{lemm}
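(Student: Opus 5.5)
By the PBW theorem, every element of $Ind_\ell(S)$ can be written uniquely as a finite sum
$$v=\sum_{({\bf n,m,j,i})} Q^{\bf n}G^{\bf m}T^{\bf j}L^{\bf i}v_{\bf n,m,j,i},\qquad v_{\bf n,m,j,i}\in S .$$
For $v\neq0$ we let $\deg(v)$ be the $\succ$-maximal index $({\bf n,m,j,i})$ with $v_{\bf n,m,j,i}\ne0$. The plan is to show that any nonzero submodule $W$ meets $1\otimes S$ nontrivially. Since $S$ is a simple $\mathfrak b$-module and $1\otimes S$ generates $Ind_\ell(S)$, this gives $W=Ind_\ell(S)$.

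To this end, take $0\ne v\in W$ with $\deg(v)=({\bf n,m,j,i})$. If $\deg(v)\ne{\bf 0}$, I will produce an element $x\in\mathcal L^+\cup\{T_0,L_0\}$, built from the generators $L_{k},T_k,G_{k-\frac12},Q_{k-\frac12}$ with $k$ shifted by $s$, such that $xv\ne0$ and $\deg(xv)\prec\deg(v)$. Iterating strictly decreases the degree in a well-ordered set, since lengths are bounded below and there are finitely many indices of each length. The iteration therefore ends at a nonzero element of $W\cap(1\otimes S)$.

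The choice of $x$ follows the standard Mathieu--Zhao / Liu--Pei--Xia scheme (\cite{MZ1,LPX20}), using the hypotheses that $T_s$ acts injectively and that $L_i,T_j,G_{i+\frac12},Q_{i+\frac12}$ kill $S$ for $i\ge s$, $j>s$. I organise it in four cases, following the order of the principal total order.

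First, if ${\bf i}\ne{\bf 0}$, let $p=\min\{t: i_t\ne0\}$ and use $x=T_{p+s}$. The bracket $[T_{p+s},L_{-p}]=pT_s$ replaces one $L_{-p}$ by $T_s$, and $T_s$ acts injectively on $S$. All other contributions either have strictly smaller length or involve $T_{k}$ with $k>s$, $L_k$, $G,Q$ with large index, which annihilate $S$ or yield smaller indices. Hence the leading term of $T_{p+s}v$ is $p\,i_p\,Q^{\bf n}G^{\bf m}T^{\bf j}L^{{\bf i}-\epsilon_p}T_sv_{\bf n,m,j,i}\ne0$.

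Second, if ${\bf i}={\bf 0}$ but ${\bf j}\ne{\bf 0}$, let $p=\min\{t:j_t\ne0\}$ and use $x=L_{p+s}$. The bracket $[L_{p+s},T_{-p}]=pT_s$ again produces the injective $T_s$, while the brackets among the $T$'s are central and only involve $T_{\pm}$ of matching index, so they lower the degree.

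Third, if ${\bf i}={\bf j}={\bf 0}$ and ${\bf m}\ne{\bf 0}$, let $p=\min\{t:m_t\ne0\}$ and use $x=Q_{p+s-\frac12}$. The bracket $[Q_{p+s-\frac12},G_{-p+\frac12}]$ equals $2L_s$ plus a multiple of $T_s$ (and no central term, since $s\ge1$). Here $L_s$ kills $S$, so the $T_s$-coefficient is what remains, and I must check that this coefficient is nonzero. The sign bookkeeping from the odd elements also has to be handled.

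Fourth, if only ${\bf n}\ne{\bf 0}$, the analogous choice is $x=G_{p+s-\frac12}$.

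The main obstacle is the verification in each case that no other term of $xv$ reaches or exceeds the claimed leading index. This requires tracking how commuting $x$ past the monomial shifts the length and degree, in particular when $x$ hits a lower-index factor and creates $L_k$ or $T_k$ with $0<k<s$. Such factors act nontrivially on $S$, but they come from monomials of smaller length. I would handle this by bounding the length of each resulting monomial, exactly as in the corresponding lemma for the $N=1$ Neveu-Schwarz algebra in \cite{LPX20}, rather than computing every term.

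A secondary point is to confirm the nonvanishing of the $T_s$-coefficient in cases three and four; I expect it to be $\pm(p+s-1)$-type and nonzero for $s\ge1$. If it vanished, I would instead use a combination of $x$ with $T_0$.
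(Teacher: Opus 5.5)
Your proposal is correct and follows the paper's proof essentially verbatim: the same PBW leading-term argument with respect to the principal order, using $T_{s+a}$, $L_{s+b}$, $Q_{s+c-\frac12}$, $G_{s+d-\frac12}$ to strictly lower the degree until you land in $1\otimes S$. Two small corrections: $[T_{p+s},L_{-p}]=(p+s)T_s$ (not $pT_s$), and the coefficient you left unchecked is explicit, namely $[Q_{p+s-\frac12},G_{-p+\frac12}]=2L_s-(2p+s-1)T_s$ and $[G_{p+s-\frac12},Q_{-p+\frac12}]=2L_s+(2p+s-1)T_s$, which is nonzero for $p,s\ge 1$, so no fallback involving $T_0$ is needed.
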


{\it Proof.}   Let $\succ$ be the principal  total order on $\mathbb{S}_1\times \mathbb{S}_1\times \mathbb{S}_0\times \mathbb{S}_0$,  which is introduced in Section 2. By the PBW Theorem, each $v\in Ind_\ell(S)$ can be uniquely written in  the form
\begin{equation}\label{PBWform1}
\sum\limits_{({\bf n',m',j',i'})\in \mathbb{S}_1\times \mathbb{S}_1\times \mathbb{S}_0\times \mathbb{S}_0}Q^{\bf n'}G^{\bf m'}T^{\bf j'}L^{\bf i'}v_{\bf n',m',j',i'},
\end{equation}
 where $v_{{\bf n',m',j',i'}}\in S,$ and $$Q^{{\bf n'}}=\cdots Q_{-2+\frac{1}{2}}^{n'_{2}} Q_{-1+\frac{1}{2}}^{n'_{1}}, \quad
 G^{{\bf m'}}=\cdots G_{-2+\frac{1}{2}}^{m'_{2}} G_{-1+\frac{1}{2}}^{m'_{1}},\quad
 T^{{\bf j'}}=\cdots T_{-2}^{j'_{2}} T_{-1}^{j'_{1}},\quad
 L^{{\bf i'}}=\cdots L_{-2}^{i'_{2}} L_{-1}^{i'_{1}}.$$

For any $v\in Ind_\ell(S)$, let $supp(v)$ be the set of all $({\bf n',m',j',i'})$ with $v_{{\bf n',m',j',i'}}\ne 0$, and $deg(v)$ be the maximal element of $supp(v)$ with respect to the  order $\succ$.  Let $deg(v)=({\bf n,m,j,i})$, and call $Q^{\bf n}G^{\bf m}T^{\bf j}L^{\bf i}v_{\bf n,m,j,i}$ the leading term of $v$.

In order to prove the simplicity of $Ind_\ell(S)$, we need the following Claim.

{\bf Claim 1.} (1) If ${\bf i}\ne 0$, let $a:=\min\{s\,|\,i_s\ne 0\}.$ Then $deg(T_{s+a}v)=({\bf n,m,j,i}-\epsilon_a).$

(2) If $ {\bf j}\ne 0$, let $b:=\min\{s\,|\,j_s\ne 0\}.$ Then  $deg(L_{s+b}v)=({\bf n,m,j}-\epsilon_b,{\bf i}).$

(3) If ${\bf m}\ne 0$, let $c:=\min\{s\,|\,m_s\ne 0\}.$ Then $deg(Q_{s+c-\frac{1}{2}}v)=({\bf n,m}-\epsilon_c,{\bf j,i}).$

(4) If $ {\bf n}\ne 0$, let $d:=\min\{s\,|\,n_s\ne 0\}.$ Then $deg(G_{s+d-\frac{1}{2}}v)=({\bf n}-\epsilon_d,{\bf m,j,i}).$

We prove Claim 1 (1) as an example, as the others can be proved similarly. Assume  ${\bf i}\ne 0$ and  let $a:=\min\{s\,|\,i_s\ne 0\}.$
 Note that $T_{s+a}v_{\bf n',m',j',i'}=0$ for all $({\bf n',m',j',i'})\in supp(v).$ Then

\begin{eqnarray*}&&T_{s+a}(Q^{\bf n'}G^{\bf m'}T^{\bf j'}L^{\bf i'}v_{\bf n',m',j',i'})\\
&=&\left([T_{s+a},Q^{\bf n'}]G^{\bf m'}T^{\bf j'}L^{\bf i'}+Q^{\bf n'}[T_{s+a},G^{\bf m'}]T^{\bf j'}L^{\bf i'}\right)v_{\bf n',m',j',i'}\\
&&+\left(Q^{\bf n'}G^{\bf m'}[T_{s+a},T^{\bf j'}]L^{\bf i'}+Q^{\bf n'}G^{\bf m'}T^{\bf j'}[T_{s+a},L^{\bf i'}]\right)v_{\bf n',m',j',i'}.
\end{eqnarray*}
Write this as the sum of some elements written in the form \eqref{PBWform1}. Then one can see that all
the lengths of summands coming from

$$[T_{s+a},Q^{\bf n'}]G^{\bf m'}T^{\bf j'}L^{\bf i'}v_{\bf n',m',j',i'},\ Q^{\bf n'}[T_{s+a},G^{\bf m'}]T^{\bf j'}L^{\bf i'}v_{\bf n',m',j',i'},\ Q^{\bf n'}G^{\bf m'}[T_{s+a},T^{\bf j'}]L^{\bf i'}v_{\bf n',m',j',i'}$$ are strictly smaller than ${\bf w(n',m',j',i')}-s$.

Now consider the summands coming from $Q^{\bf n'}G^{\bf m'}T^{\bf j'}[T_{s+a},L^{\bf i'}]v_{\bf n',m',j',i'}$. Let  $a':=\min\{s\,|\,i_s'\ne 0\}.$   If $a<a'$, all the lengths of summands coming from
$$Q^{\bf n'}G^{\bf m'}T^{\bf j'}[T_{s+a},L^{\bf i'}]v_{\bf n',m',j',i'}$$ are strictly smaller than ${\bf w(n',m',j',i')}-s.$

If  $a>a'$, all the lengths of summands coming from
$$Q^{\bf n'}G^{\bf m'}T^{\bf j'}[T_{s+a},L^{\bf i'}]v_{\bf n',m',j',i'}$$ are  smaller than ${\bf w(n',m',j',i')}-s,$ and there may be some summands whose lengths are equal to ${\bf w(n',m',j',i')}-s.$ But the degrees of these summands, whose lengths are equal to ${\bf w(n',m',j',i')}-s$, must be smaller than   ${\bf d(n',m',j',i')}-2$.

If $a=a'$,  all the lengths of summands coming from
$$Q^{\bf n'}G^{\bf m'}T^{\bf j'}[T_{s+a},L^{\bf i'}]v_{\bf n',m',j',i'}$$ are  smaller than ${\bf w(n',m',j',i')}-s,$ and there exists a  summand whose length is equal to ${\bf w(n',m',j',i')}-s,$ and whose degree is equal to
${\bf d(n',m',j',i')}-1$. Furthermore, one can see that, up to a  nonzero scalar,    the leading term of $Q^{\bf n'}G^{\bf m'}T^{\bf j'}[T_{s+a},L^{\bf i'}]v_{\bf n',m',j',i'}$ is equal to
$$Q^{\bf n'}G^{\bf m'}T^{\bf j'}L^{{\bf i'}-\epsilon_a}v_{\bf n',m',j',i'}.$$

From the above argument, we obtain that, up to a  nonzero scalar,  the leading term of $T_{s+a}v$ is equal to
$$Q^{\bf n}G^{\bf m}T^{\bf j}L^{{\bf i}-\epsilon_a}v_{\bf n,m,j,i},$$ and $deg(T_{s+a}v)=({\bf n,m,j,i}-\epsilon_a).$ This proves  Claim 1 (1).

By applying Claim 1 repeatedly,  one can obtain a nonzero element in $U(\mathcal{L})v\cap S$ from any nonzero element $v\in Ind_\ell(S)$.  This proves the simplicity of $Ind_\ell(S)$.

\begin{lemm}\label{Lb-irr} Let   $S$ be a simple $\mathfrak{b}$-module. Assume that there exists $r\in\Z_{\ge 1}$ such that $L_r$ acts injectively on $S$, and  $L_iS=T_jS=G_{j+\frac{1}{2}}S=Q_{j+\frac{1}{2}}S=0$ for all $i>r,j\ge r$.
Then for all $\ell\in\C$, $Ind_\ell(S)$ is a simple $\mathcal{L}$-module.
\end{lemm}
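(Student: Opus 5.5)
We follow the scheme of Lemma \ref{Tb-irr}, with $L_r$ in place of $T_s$ as the injective operator. We use the same PBW basis $Q^{\bf n}G^{\bf m}T^{\bf j}L^{\bf i}v_{\bf n,m,j,i}$, $v_{\bf n,m,j,i}\in S$, for $Ind_\ell(S)$ and define $supp(v)$, $deg(v)$ and the leading term as before. The principal order may need to be modified so that the $T$-exponent ${\bf j}$ is compared before ${\bf i}$; we discuss this below. The aim is to show that any nonzero $v\in Ind_\ell(S)$ can be lowered, by repeated application of positive elements, to a nonzero element of $U(\mathcal{L})v\cap S$. Simplicity then follows, since $S$ is simple and generates $Ind_\ell(S)$.

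The key step is an analogue of Claim 1, with shifts adapted to the new annihilation conditions. Here $L_i S=0$ for $i>r$, and $T_j S=G_{j+\frac12}S=Q_{j+\frac12}S=0$ for $j\ge r$. Let $deg(v)=({\bf n,m,j,i})$. The lowering operators are as follows.

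(a) If ${\bf j}\ne0$ and $b=\min\{t\,|\,j_t\ne0\}$, apply $L_{r+b}$. The bracket $[L_{r+b},T_{-b}]=bT_r$ produces $T_r$. Since $T_r$ kills $S$, we must instead use $[L_{r+b},T_{-b}]$ moving the $T$ part to $L_r$.

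The expected route is therefore to use $T_{r+b}$ against $L^{\bf i}$. Since $[T_{r+a},L_{-a}]=(r+a)T_r$ kills $S$, this is useless. The correct pairing is the reverse one: $[L_{r+a},L_{-a}]=(r+2a)L_r$ plus central terms. So for ${\bf i}\ne0$ with $a=\min\{t\,|\,i_t\ne 0\}$ we apply $L_{r+a}$. The resulting leading term is a nonzero multiple of $Q^{\bf n}G^{\bf m}T^{\bf j}L^{{\bf i}-\epsilon_a}L_rv_{\bf n,m,j,i}$, which is nonzero because $L_r$ is injective on $S$. For ${\bf j}\ne0$ we apply $T_{r+b}$: here $[T_{r+b},T_{-b}]$ is central, so this does not work directly. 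Instead we use $L_{r+b}$, giving $[L_{r+b},T_{-b}]=bT_r$, and $T_r$ commutes past to act on $S$ as $0$. So ${\bf j}$ has to be handled differently.

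Because $T_r$ acts by zero on $S$, the $T$-modes are removed by applying $L_{r+b}$ only after all $L$'s have been cleared. More carefully, we order so that ${\bf j}$ is compared before ${\bf i}$ and remove $T$'s last. In fact the cleanest pairing for the fermions is $Q_{r+c-\frac12}$ for ${\bf m}$, using $[Q,G]$ which contains $2L_r$ plus a $T_r$ term; the $T_r$ term kills $S$, leaving an injective $L_r$. Similarly $G_{r+d-\frac12}$ handles ${\bf n}$. For ${\bf j}$, applying $L_{r+b}$ replaces $T_{-b}$ by $bT_r$, which annihilates $S$. This drops the term entirely rather than giving a lower leading term, so we first clear ${\bf i}$, ${\bf m}$ and ${\bf n}$.

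To reach $S$ when only $T^{\bf j}$ remains, we use the $\mathfrak b$-module structure as follows. For $w=\sum_{\bf j}T^{\bf j}w_{\bf j}$, apply $L_{r+b}$ with $b$ maximal among the indices present, rather than minimal. The commutator $[L_{r+b},T_{-k}]=kT_{r+b-k}$ with $k\le b$ gives $T_{r+(b-k)}$, which kills $S$ whenever $b-k\ge0$. So $L_{r+b}$ annihilates everything, and this fails. Instead we use $L_{r-b}$ for $b<r$, or compare via $[L_{r+b}, T_{-b}T_{-b'}]$.

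We expect this step, removing the pure Heisenberg part ${\bf j}$, to be the main obstacle. The first thing to try is to pair $T_{-b}$ with $T_b$ when $\ell\ne0$, using $[T_b,T_{-b}]=\frac{\ell}{3}b$. If $\ell=0$, we would instead bring in $G$ and $Q$: apply $G_{r+b-\frac12}$ followed by $Q_{-\frac12}$-type manipulations, converting $T_{-b}$ into an $L_r$ contribution via $[T_{-b},G_{r+b-\frac12}]=G_{r-\frac12}$ and then $[Q_{\frac12},G_{r-\frac12}]\ni 2L_r$. Here $Q_{\frac12}$ lies in $\mathfrak b$ and acts on $S$, so this requires bookkeeping similar to Claim 1.

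Once every exponent vector has been reduced, repeated application yields $0\ne u\in U(\mathcal{L})v\cap S$, and hence $Ind_\ell(S)=U(\mathcal L)u$ is simple.
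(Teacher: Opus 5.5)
Your treatment of ${\bf i},{\bf m},{\bf n}$ is the paper's Claim 2. The operators $L_{r+a}$, $Q_{r+c-\frac12}$ and $G_{r+d-\frac12}$ annihilate $S$, and since $T_rS=0$ the relevant brackets act on $S$ as nonzero multiples of the injective $L_r$. No reordering of the principal order is needed; the paper just treats ${\bf i}={\bf m}={\bf n}=0$ last.

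The genuine gap is the step you flag yourself: ${\bf i}={\bf m}={\bf n}=0\ne{\bf j}$. The proposal leaves it open, and neither suggested route works as stated.
\begin{itemize}
\item \emph{Pairing $T_{-b}$ with $T_b$.} Here $b=\min\{t\mid j_t\ne 0\}$ may be smaller than $r$, and then $T_b$ need not kill $S$. So $T_bT^{\bf j}v_{\bf j}=T^{\bf j}T_bv_{\bf j}+\frac{\ell}{3}bj_bT^{{\bf j}-\epsilon_b}v_{\bf j}$ keeps the top term $T^{\bf j}T_bv_{\bf j}$, and nothing is lowered, whatever $\ell$ is.
\item \emph{The fermionic route.} $G_{r+b-\frac12}$ does kill $S$, but the coefficient it creates at ${\bf j}-\epsilon_b$ is $-j_bG_{r-\frac12}v_{\bf j}$, which may vanish. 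A subsequent $Q_{\frac12}$ cannot isolate $2L_rv_{\bf j}$, because $Q_{\frac12}\in\mathfrak b$ acts on $S$: $Q_{\frac12}G_{r-\frac12}v_{\bf j}=2L_rv_{\bf j}-G_{r-\frac12}Q_{\frac12}v_{\bf j}$.
\end{itemize}

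The missing idea is to use $[G_{k+\frac12},Q_{l+\frac12}]=2L_r+(k-l)T_r$ (with $k,l\ge 0$, $k+l+1=r$) as an existence statement, not as a way to manufacture $L_r$. If every $G_{k+\frac12}$ and $Q_{k+\frac12}$ with $k\ge 0$ killed some $0\ne w\in S$, then $2L_rw=0$, contradicting injectivity.

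The paper therefore takes $p\ge 0$ maximal with $G_{p+\frac12}v_{\bf j}\ne 0$ (or the $Q$-analogue) and applies $G_{d+p+\frac12}$, where $d=\min\{t\mid j_t\ne 0\}$. This operator kills $v_{\bf j}$, while $[G_{d+p+\frac12},T_{-d}]=-G_{p+\frac12}$ produces the new leading term $-j_dT^{{\bf j}-\epsilon_d}G_{p+\frac12}v_{\bf j}\ne 0$. This works uniformly in $\ell$, so your split into $\ell\ne 0$ and $\ell=0$ is unnecessary.

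Your own operator can also be rescued.
\begin{itemize}
\item The case $(k,l)=(r-1,0)$ shows $G_{r-\frac12}S\ne 0$.
\item Acting on $v$ by $u\in U(\mathfrak b)$ keeps the leading index $({\bf 0},{\bf 0},{\bf j},{\bf 0})$ and replaces $v_{\bf j}$ by $uv_{\bf j}$, with $L_0$ shifted by the constant $\sum_t tj_t$.
\item By simplicity of $S$ you may therefore assume $G_{r-\frac12}v_{\bf j}\ne 0$, and then apply $G_{r+b-\frac12}$.
\end{itemize}
This variant has the small advantage that $G_{r+b-\frac12}$ annihilates all of $S$, so its direct action on the other coefficients of $v$ vanishes. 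That is not automatic for the paper's $G_{d+p+\frac12}$ when $d+p<r$.
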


{\it Proof.}   Let $\succ$ be the principal  total order on $\mathbb{S}_1\times \mathbb{S}_1\times \mathbb{S}_0\times \mathbb{S}_0$, which is introduced in Section 2. By the PBW Theorem, each $v\in Ind_\ell(S)$ can be uniquely written in  the form
\begin{equation}\label{PBWform}
\sum\limits_{({\bf n',m',j',i'})\in \mathbb{S}_1\times \mathbb{S}_1\times \mathbb{S}_0\times \mathbb{S}_0}Q^{\bf n'}G^{\bf m'}T^{\bf j'}L^{\bf i'}v_{\bf n',m',j',i'},
\end{equation}
 where $v_{{\bf n',m',j',i'}}\in S,$ and $$Q^{{\bf n'}}=\cdots Q_{-2+\frac{1}{2}}^{n'_{2}} Q_{-1+\frac{1}{2}}^{n'_{1}}, \quad
 G^{{\bf m'}}=\cdots G_{-2+\frac{1}{2}}^{m'_{2}} G_{-1+\frac{1}{2}}^{m'_{1}},\quad
 T^{{\bf j'}}=\cdots T_{-2}^{j'_{2}} T_{-1}^{j'_{1}},\quad
 L^{{\bf i'}}=\cdots L_{-2}^{i'_{2}} L_{-1}^{i'_{1}}.$$

For any $v\in Ind_\ell(S)$, let $supp(v)$ be the set of all $({\bf n',m',j',i'})$ with $v_{{\bf n',m',j',i'}}\ne 0$, and $deg(v)$ be the maximal element of $supp(v)$ with respect to the above  total order $\succ$.  Let $deg(v)=({\bf n,m,j,i})$.

 Similarly to Claim 1, we have the following Claim.

{\bf Claim 2.} (1) If ${\bf i}\ne 0$, let $a:=\min\{s\,|\,i_s\ne 0\}.$ Then $deg(L_{r+a}v)=({\bf n,m,j,i}-\epsilon_a).$

(2) If ${\bf m}\ne 0$, let $b:=\min\{s\,|\,m_s\ne 0\}.$ Then $deg(Q_{r+b-\frac{1}{2}}v)=({\bf n,m}-\epsilon_b,{\bf j,i}).$

(3) If ${\bf n}\ne 0$, let $c:=\min\{s\,|\,n_s\ne 0\}.$ Then $deg(G_{r+c-\frac{1}{2}}v)=({\bf n}-\epsilon_c,{\bf m,j,i}).$

Now we consider the remaining case ${\bf i}={\bf m}={\bf n}=0,{\bf j}\ne 0$. Let $d:=\min\{s\,|\,j_s\ne 0\}.$
If there exists $0\ne w\in S$   such that
$  G_{k+\frac{1}{2}}w= Q_{k+\frac{1}{2}}w=0, k\ge 0.$
One can choose $k,l\ge 0$ with $k+l+1=r$. Then
$$[G_{k+\frac{1}{2}}, Q_{l+\frac{1}{2}}]w=(2L_r+(k-l)T_r)w=2L_rw=0,$$
which contradicts the injectivity of $L_r$.
Therefore, for $v_{\bf n,m,j,i}$, there exists $p\in\Z_{\ge 0}$  such that
\begin{equation}\label{G} G_{p+\frac{1}{2}}v_{\bf n,m,j,i}\ne 0,\ G_{k+\frac{1}{2}}v_{\bf n,m,j,i}=0, k>p,\end{equation} or
\begin{equation} Q_{p+\frac{1}{2}}v_{\bf n,m,j,i}\ne 0,\ Q_{k+\frac{1}{2}}v_{\bf n,m,j,i}=0, k>p.\end{equation}
Without loss of  generality, we assume that \eqref{G} holds. Now, one can see that $deg(G_{d+p+\frac{1}{2}} v)=({\bf n,m},{\bf j}-\epsilon_d,{\bf i})$.

By applying this and  Claim 2 repeatedly,  one can obtain a nonzero element in $U(\mathcal{L})v\cap S$ from any nonzero element $v\in Ind_\ell(S)$. This proves the simplicity of $Ind_\ell(S)$.

By an analogous argument,  we arrive at  the following Lemma.
\begin{lemm}\label{LTb-irr} Let   $S$ be a simple $\mathfrak{b}$-module. Assume that there exists $r\in\Z_{\ge 1}$ such that $L_r,$ $T_r$, $L_r+(-2a-r+1)T_r, L_r+(r+2b-1)T_r,\, \forall a,b\in\Z_{\ge 1}$ act injectively on $S$, and  $L_iS=T_iS=G_{j+\frac{1}{2}}S=Q_{j+\frac{1}{2}}S=0$ for all $i>r,j\ge r$. Then for all $\ell\in\C$, $Ind_\ell(S)$ is a simple $\mathcal{L}$-module.
\end{lemm}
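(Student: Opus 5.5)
The plan is to follow the leading-term argument of Lemma \ref{Tb-irr} and Lemma \ref{Lb-irr}. We use the same PBW basis \eqref{PBWform} of $Ind_\ell(S)$ and the same principal total order $\succ$ on $\mathbb{S}_1\times \mathbb{S}_1\times\mathbb{S}_0\times \mathbb{S}_0$. For $0\neq v\in Ind_\ell(S)$ with $deg(v)=({\bf n,m,j,i})$, we want to show that each nonzero component can be stripped off one factor at a time. Each step applies a suitable positive element whose commutator with the lowest-index creation operator lands on one of the injective operators on $S$. The hypotheses are tailored to this. For $a,b,c,d\in\Z_{\ge 1}$ we compute
\begin{eqnarray*}
&&[L_{r+a},L_{-a}]=(r+2a)L_r+\text{(central only if $r=0$)},\qquad [L_{r+b},T_{-b}]=bT_r,\\
&&[G_{-c+\frac12},Q_{r+c-\frac12}]=2L_r+(-2c-r+1)T_r,\qquad [G_{r+d-\frac12},Q_{-d+\frac12}]=2L_r+(r+2d-1)T_r,
\end{eqnarray*}
and all four right-hand sides act injectively on $S$ by assumption. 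Note also that $L_{r+k},T_{r+k},G_{r+k-\frac12},Q_{r+k-\frac12}$ annihilate $S$ for $k\ge1$.

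The claim to establish, in the form of Claims 1 and 2, has four parts. (1) If ${\bf i}\ne0$ with $a=\min\{s\,|\,i_s\neq0\}$, then $deg(L_{r+a}v)=({\bf n,m,j,i}-\epsilon_a)$. (2) If ${\bf i}=0$, ${\bf j}\neq0$ with $b=\min\{s\,|\,j_s\neq0\}$, then $deg(L_{r+b}v)=({\bf n,m,j}-\epsilon_b,{\bf 0})$. (3) If ${\bf m}\neq 0$ with $c$ minimal, then $deg(Q_{r+c-\frac12}v)=({\bf n,m}-\epsilon_c,{\bf j,i})$. (4) If ${\bf n}\neq0$ with $d$ minimal, then $deg(G_{r+d-\frac12}v)=({\bf n}-\epsilon_d,{\bf m,j,i})$.

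In each case we expand $x\,Q^{\bf n'}G^{\bf m'}T^{\bf j'}L^{\bf i'}v_{\bf n',m',j',i'}$ by moving $x$ to the right, where it kills $S$. The argument then mirrors the proof of Claim 1. Commutators with factors of index different from the chosen minimal one either drop the length below ${\bf w}-(r+\text{index})$, or keep that length but drop the degree by at least $2$. Commutators that produce a positive element (or $L_0,T_0$) can be pushed right, and they only decrease length. The unique surviving top term is, up to a nonzero scalar, the monomial with one factor removed. Its coefficient is obtained by applying the corresponding injective operator ($L_r$, $T_r$, or one of the two combinations) to $v_{\bf n,m,j,i}$, so it is nonzero. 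Applying (1)--(4) repeatedly reduces any nonzero $v$ to a nonzero element of $U(\mathcal{L})v\cap S$, and simplicity of $S$ then gives simplicity of $Ind_\ell(S)$.

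The main obstacle is the bookkeeping in the tie-breaking of $\succ$. When we remove, say, an $L_{-a}$ factor with $L_{r+a}$, the same operator hitting a $T_{-a}$ factor produces a $T_r$ term of the same length and degree. We must check that this term yields a monomial strictly smaller in $\succ$. Since ${\bf i}$ is compared before ${\bf j}$, the term with ${\bf i}$ reduced dominates; this is why we remove $L$-factors first and handle ${\bf j}$ only once ${\bf i}=0$. Similarly, in (3) and (4) the $G$/$Q$ annihilators can also hit $L$ or $T$ factors, and we must ensure that the odd factor's removal gives the lexicographically top term. I would first try ordering the reductions as ${\bf i}$, then ${\bf j}$, then ${\bf m}$, then ${\bf n}$, matching the order of conditions (3)--(6) of $\succ$. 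If some tie survives, I would instead reduce ${\bf n}$ and ${\bf m}$ first, as in Claim 1.
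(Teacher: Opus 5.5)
Your overall plan is the one the paper intends; its proof is only ``an analogous argument'' to Lemmas \ref{Tb-irr} and \ref{Lb-irr}. Your steps (2)--(4) go through. In (3) and (4), the only factor of weight $c-\frac12$ (resp.\ $d-\frac12$) that pairs nontrivially with $Q_{r+c-\frac12}$ (resp.\ $G_{r+d-\frac12}$) is $G_{-c+\frac12}$ (resp.\ $Q_{-d+\frac12}$). In (2), the assumption ${\bf i}={\bf 0}$ forces every support element of the same length and degree to have ${\bf i}'={\bf 0}$, so no $L_{-b}$ competes.

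Step (1), however, is false. The operator $L_{r+a}$ pairs with both $L_{-a}$ and $T_{-a}$, producing $(r+2a)L_r$ and $aT_r$, and both act nontrivially on $S$. Your tie-break also runs the wrong way. Since ${\bf i}-\epsilon_a<{\bf i}$ and ${\bf i}$ is compared before ${\bf j}$, deleting a $T_{-a}$ from the leading monomial gives $({\bf n,m,j}-\epsilon_a,{\bf i})\succ({\bf n,m,j,i}-\epsilon_a)$. Worse, a lower monomial can cancel the top coefficient outright. Take $v=L_{-1}v_0+T_{-1}v_1$: its leading term is $L_{-1}v_0$, so $a=1$, yet $L_{r+1}v=(r+2)L_rv_0+T_rv_1$, which vanishes as soon as $T_rv_1=-(r+2)L_rv_0$. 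The hypotheses do not exclude $L_rS\cap T_rS\neq 0$. In fact, in the very application in Theorem \ref{LT-th}, $2L_r+(p-q)T_r$ kills $V\subset S$. So for $0\neq v_0\in V$ and $v_1=\frac{(r+2)(p-q)}{2}v_0$ you get $L_{r+1}v=0$. Your fallback of reordering the reductions does not help, since this $v$ has no odd factors and its leading monomial has ${\bf j}={\bf 0}$.

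The repair is to strip $L$-factors with $T_{r+a}$, as in Claim 1(1) of Lemma \ref{Tb-irr}. There $[T_{r+a},L_{-a}]=(r+a)T_r$, while $[T_{r+a},T_{-a}]=\frac{C}{3}(r+a)\delta_{r,0}=0$ because $r\ge 1$. Then, at length ${\bf w}-a$ and degree ${\bf d}-1$:
\begin{itemize}
\item the only contributions come from deleting a single $L_{-a}$;
\item each resulting monomial has a unique preimage;
\item ${\bf i}'\mapsto{\bf i}'-\epsilon_a$ preserves the order, because the order on $\mathbb{S}_0$ is lexicographic starting from the rightmost entry with the larger entry winning, hence translation invariant;
\item the top coefficient is $(r+a)i_aT_rv_{\bf n,m,j,i}\neq 0$.
\end{itemize}

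One further point: your odd steps need injectivity of $2L_r+\alpha T_r$, whereas the statement as printed lists $L_r+\alpha T_r$. You should say explicitly that you read the hypothesis with the factor $2$, which is what the proof of Theorem \ref{LT-th} actually provides.
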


\begin{rema}\label{no-irr} If we drop the simplicity of $S$ in Lemmas \ref{Tb-irr}-\ref{LTb-irr}, one  may still obtain a nonzero element in $U(\mathcal{L})v\cap S$ from any nonzero element $v\in Ind_\ell(S)$.
\end{rema}

\begin{theo}\label{T-th} Let $V$ be a simple $\mathcal{L}_{d,p,q}$-module. Assume that there exist $r,s\in\Z_{\ge 0},\, p,q\in\Z$ with $\max\{p,q\}<\max\{r,s\}$ such that $L_r,T_s$ act injectively on $V$, and $L_iV=T_jV=G_{k+\frac{1}{2}}V=Q_{l+\frac{1}{2}}V=0$ for all $i>r,j>s,k>p,l>q$.
If $0\le r<s,\ p+q+3=s,s+1$ and $d=s-r$,  the following results hold.

{\rm (1)}  Set $M:=Ind^{\mathcal{L}_{0,p,q}}_{\mathcal{L}_{d,p,q}}(V)$.  Then $M$ is a simple $\mathcal{L}_{0,p,q}$-module. Furthermore,
\begin{equation}\label{M}\left\{\begin{array}{l} T_s \mbox{ acts injectively on } M,\\
T_j(M)=0,j>s, \quad  L_j(M)=0,j>s-1,\\
 G_{k+\frac{1}{2}}(M)=0,k>p,\quad  Q_{l+\frac{1}{2}}(M)=0, l> q.\end{array}\right.
\end{equation}

{\rm (2)}  For all $\ell\in\C$, $Ind_\ell(V)$ is a simple $\mathcal{L}$-module.
\end{theo}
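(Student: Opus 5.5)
The plan is to factor the induction into three stages,
$$\mathcal{L}_{d,p,q}\subset \mathcal{L}_{0,p,q}\subset \mathfrak{b}':=\mathcal{L}_{0,p,q}+\textstyle\sum_{k\le p}\C G_{k+\frac{1}{2}}+\sum_{l\le q}\C Q_{l+\frac{1}{2}}\ (\text{inside }\mathfrak{b})\subset\mathcal{L},$$
and to show simplicity at each stage by a leading-term argument in the spirit of Claim 1 and Claim 2. Transitivity of induction then gives $Ind(V)\cong Ind^{\mathcal{L}}_{\mathfrak{b}}(N)$, where $N:=Ind^{\mathfrak{b}}_{\mathcal{L}_{0,p,q}}(M)$. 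The last step will be an application of Lemma \ref{Tb-irr}, with the integer $s$ of the theorem in the role of $s$ there.

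\textbf{Part (1).} By PBW, every $v\in M$ is uniquely $\sum_{\bf j} T_1^{j_1}\cdots T_{d-1}^{j_{d-1}}v_{\bf j}$ with $v_{\bf j}\in V$, since $\mathcal{L}_{0,p,q}$ is obtained from $\mathcal{L}_{d,p,q}$ by adding $T_1,\dots,T_{d-1}$. I would order the monomials by total degree first and then by a (reverse) lexicographic order. Take the leading term and let $k$ be the largest index with $j_k\neq 0$. I would apply $L_{s-k}$. Since $k\le d-1$, we have $s-k\ge r+1$, so $L_{s-k}V=0$. The commutator $[L_{s-k},T_k]=-kT_s$, and $T_s$ commutes with all $T_{k'}$ for $k'>0$ and is injective on $V$. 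Hence the top term becomes $-kj_k\,T^{{\bf j}-\epsilon_k}T_sv_{\bf j}\neq0$.

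The other commutators $[L_{s-k},T_{k'}]=-k'T_{s-k+k'}$ fall into two cases. If $k'<k$, they land either in $\mathcal{L}_{d,p,q}$ or among $T_1,\dots,T_{d-1}$, and in both cases they strictly lower the total degree or the order. Any terms with index $>s$ simply vanish on $V$. Iterating this reduction produces a nonzero element of $U(\mathcal{L}_{0,p,q})v\cap V$, and simplicity of $V$ gives simplicity of $M$.

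The properties \eqref{M} follow from the same PBW expression:
\begin{itemize}
\item $T_s$ commutes with $T_1,\dots,T_{d-1}$, so it acts coefficientwise and is therefore injective on $M$.
\item For $j>s$, $T_j$ commutes with $T_k$ ($k>0$) and kills $V$.
\item For $j\ge s$, $[L_j,T_k]=-kT_{j+k}$ has index $>s$, so $L_j$ kills $M$.
\item $[T_k,G_{m+\frac12}]=G_{m+k+\frac12}$ only raises the index, and similarly for $Q$, so the vanishing of $G_{k+\frac12}$ for $k>p$ and $Q_{l+\frac12}$ for $l>q$ propagates from $V$ to $M$.
\end{itemize}

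\textbf{Part (2).} I would form $N=Ind^{\mathfrak{b}}_{\mathcal{L}_{0,p,q}}(M)$. Its PBW basis consists of products of distinct $G_{k+\frac12}$ ($0\le k\le p$) and $Q_{l+\frac12}$ ($0\le l\le q$) applied to $M$; this is a finite exterior-type expansion. The next step is to check the hypotheses of Lemma \ref{Tb-irr} for $N$. For $i\ge s$, commuting $G_{i+\frac12}$ past a $Q_{l+\frac12}$ produces $2L_{i+l+1}+(i-l)T_{i+l+1}$, whose index is $>s$, so it kills $M$; commuting past a $G$ gives $0$. The same holds for $Q_{i+\frac12}$, $L_i$ and $T_j$ with $j>s$. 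Injectivity of $T_s$ on $N$ follows because $[T_s,G_{k+\frac12}]=G_{s+k+\frac12}$ has $s+k>p$ and so kills $M$, and likewise for $Q$. Hence $T_s$ acts coefficientwise up to lower terms.

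For the simplicity of $N$, remove a top-index factor $Q_{l+\frac12}$ from the leading monomial by applying $G_{k+\frac12}$ with $k=s-1-l$. The hypothesis $p+q+3\in\{s,s+1\}$ gives $k\ge s-1-q\ge p+1$, so $G_{k+\frac12}$ kills $M$. The bracket $[G_{k+\frac12},Q_{l+\frac12}]$ then contributes $(k-l)T_s$, which is injective, plus $2L_s$, which acts as $0$. The $G$ factors are removed symmetrically using $Q_{s-1-k+\frac12}$. Once $N$ is shown to be simple, Lemma \ref{Tb-irr} yields that $Ind_\ell(V)\cong Ind_\ell(N)$ is simple.

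\textbf{Main obstacle.} I expect the hard step to be this last reduction inside $N$, on two counts. First, one must ensure the coefficient $k-l=s-1-2l$ is nonzero. When it vanishes, I would instead exploit the other $Q$-index, or use the freedom $p+q+3=s$ versus $s+1$ to choose a different pair. Second, one must control the cross terms from the remaining odd factors, so that the leading term really drops under the chosen order.
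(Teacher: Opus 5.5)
Your architecture matches the paper's: form $N=Ind^{\mathfrak{b}}_{\mathcal{L}_{0,p,q}}(M)$ (the paper's $W$), prove it simple, then apply Lemma \ref{Tb-irr}. Your checks of \eqref{M} and of the hypotheses of Lemma \ref{Tb-irr} are fine. The reduction steps, however, are set up wrongly.

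\textbf{The index choice.} In part (1), removing the \emph{largest} index $k$ with $L_{s-k}$ does not make $T^{{\bf j}-\epsilon_k}T_sv_{\bf j}$ the leading term. For $k'<k$ with $s-k+k'\le d-1$, the bracket $[L_{s-k},T_{k'}]=-k'T_{s-k+k'}$ produces a monomial of the same total degree as ${\bf j}$, still containing $T_k$, and it outranks your term. For example, with $r=0$, $s=d=5$ and $v_0\in V$, one gets $L_1T_4T_1v_0=-4T_1T_5v_0-T_4T_2v_0$.

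The paper instead takes the \emph{smallest} index $a$ with $j_a\ne0$, together with the reverse lexicographic order. Then every monomial in the support involves only $T_{k'}$ with $k'\ge a$. For $k'>a$ the brackets are $T_{>s}$ and vanish, so one gets exactly $\deg(L_{s-a}v)={\bf j}-\epsilon_a$.

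The same correction is needed in part (2). If you remove the top-index $Q_{l+\frac12}$ with $G_{s-1-l+\frac12}$, the lower-index factors produce $2L_j+(\ast)T_j$ with $j<s$, which act nontrivially on $M$; these are the cross terms you worry about. If the index is chosen minimal over the whole support, every cross bracket has degree $>s$ and dies.

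\textbf{The vanishing coefficient.} The coefficient problem you flag is genuine, and your remedies do not address it. The integers $p,q$ are fixed by $V$, so there is no choice between $p+q+3=s$ and $s+1$. Also, $Q_{s-a-\frac12}$ is the only odd element of $\mathcal{L}_{0,p,q}$ whose bracket with $G_{a+\frac12}$ lands in degree $s$, which is where $T_s$ supplies injectivity. If $2a=s-1$ for some $0\le a\le p$ (or $2b=s-1$ for some $0\le b\le q$), that bracket reduces to $2L_s$, which kills $M$.

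The paper's step ``by straight calculations, $\deg(Q_{s-a-\frac12}v)=({\bf n},{\bf m}-\epsilon_a)$'' silently ignores this, and the conclusion can actually fail. Take $(r,s,p,q)=(2,3,1,0)$ and $d=1$, so $M=V$ and the new odd generators are $G_{\frac12},G_{\frac32},Q_{\frac12}$. Then:
\begin{itemize}
\item $[Q_{\frac32},G_{\frac32}]=2L_3$, and $[Q_{l+\frac12},G_{\frac32}]$ has degree $\ge4$ for $l\ge2$; all of these annihilate $M+G_{\frac12}M$ (note $[L_3,G_{\frac12}]=G_{\frac72}$).
\item Positive-degree even elements send $G_{\frac32}$ to some $G_{\ge\frac52}$, which dies.
\item $Q_{\frac12}$ never has to pass $G_{\frac32}$ in normal order.
\end{itemize}
Hence the span of the PBW monomials containing $G_{\frac32}$ is a proper $\mathfrak{b}$-submodule of $N$ meeting $M$ trivially. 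So $Ind_\ell(V)$ is reducible, for $\ell$ the central charge of $V$.

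Such simple $V$ exist. Induce from a character of the abelian ideal $\mathrm{span}\{L_2,T_2,T_3,T_0,C\}$, with nonzero values on $L_2$ and $T_3$. Here the ambient algebra is $\mathcal{L}_{1,1,0}$ modulo $L_{\ge3}$, $T_{\ge4}$ and its odd part.

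So part (2) needs an extra nondegeneracy hypothesis. With that hypothesis, your plan using minimal indices goes through.

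Finally, routing everything through $\mathfrak{b}$ requires $\mathcal{L}_{0,p,q}\subset\mathfrak{b}$, i.e.\ $p,q\ge-1$. Admissible cases such as $(s,p,q)=(4,3,-2)$ are therefore not covered by your plan; the paper also only asserts them.
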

{\it Proof.}
(1) Let $\succ$ be the reverse lexicographic total order on $\Z_{\ge 0}^{d-1}$. For any $v\in M$, by the PBW Theorem we can write $v$ in the form
$\sum_{{\bf k}\in\Z_{\ge 0}^{d-1}}T^{\bf k}v_{\bf k}$, where $v_{\bf k}\in V$ and
$$T^{\bf k}=T^{k_{d-1}}_{d-1}T^{k_{d-2}}_{d-2}\cdots T^{k_{1}}_{1}.$$

 For any $v\in M\setminus V$, let $supp(v)$ be the set of all $\bf k$ with $v_{\bf k}\ne 0$, and $deg(v)$ be the maximal element of $supp(v)$ with respect to the above   order $\succ$.  Let $deg(v)=\bf j$ and $a=\min\{s\,|\,j_s\ne 0\}$.

We aim to show  that
\begin{equation}\label{T-irr}deg(L_{s-a}v)={\bf j}-\epsilon_a,\end{equation} from which one may readily deduce the simplicity of   $M$.

Since $1\le a<d$ and $d=s-r$, we have $s-a>r$ and $L_{s-a}v_{\bf k}=0, \forall \, {\bf k}\in supp(v)$. For any ${\bf k}\in supp(v)$, if $k_a\ne 0$, then
$$deg(L_{s-a}T^{\bf k}v_{\bf k})={\bf k}-\epsilon_a\preceq {\bf j}-\epsilon_a,$$
where the equality holds if and only if ${\bf k}={\bf j}$. If $k_a=0$, then $k_1=\cdots =k_a=0$ by the definition of the order $\succ$. Thus one may readily verify that $L_{s-a}T^{\bf k}v_{\bf k}=0.$  Hence, \eqref{T-irr} holds, and $M$ is a simple $\mathcal{L}_{0,p,q}$-module.

By straightforward observations, one may verify that \eqref{M} holds.

(2) The proof for   the simplicity of $Ind_\ell(V)$ will be divided into four cases: (i) $p\ge 0,q\ge 0$; (ii) $p\ge 0,q< 0$; (iii) $p\le 0,q\ge 0$; (iv) $p<0,q<0$. We prove   (i) $p\ge 0,q\ge 0$ as an example, as the others can be proved similarly.

 Let $\succ$ be the principal total order on $\Z_2^{q+1}\times \Z_2^{p+1}$  introduced in Section 2. Set
 $$W=\sum\limits_{({\bf l,k})\in \Z_2^{q+1}\times \Z_2^{p+1}}Q^{\bf l}G^{\bf k}v_{\bf l,k},$$
 where $v_{{\bf l,k}}\in M$, $$Q^{{\bf l}}=Q_{q+\frac{1}{2}}^{l_{q}}Q_{q-1+\frac{1}{2}}^{l_{q-1}}\cdots Q_{1+\frac{1}{2}}^{l_{1}} Q_{\frac{1}{2}}^{l_{0}}, \quad
 G^{{\bf k}}=G_{p+\frac{1}{2}}^{k_{p}}G_{p-1+\frac{1}{2}}^{k_{p-1}}\cdots G_{1+\frac{1}{2}}^{k_{1}} G_{\frac{1}{2}}^{k_{0}}.$$
It is clear that $W$ is a $\mathfrak{b}$-module.

 For any $v\in W\setminus M$, let $supp(v)$ be the set of all $(\bf l,k)$ with $v_{\bf l,k}\ne 0$, and $deg(v)$ be the maximal element of $supp(v)$ with respect to the above  total order $\succ$.  Let $deg(v)=(\bf n,m)$.
If ${\bf m}\ne 0$, let $a=\min\{s\,|\,m_s\ne 0\}$. It is follows from $p+q+3=s,s+1$ that  $s-a-\frac{1}{2}\ge q+\frac{3}{2}$. Thus $Q_{s-a-\frac{1}{2}}M=0$ by \eqref{M}. By straight calculations, one can see that $deg(Q_{s-a-\frac{1}{2}}v)=({\bf n,m}-\epsilon_a)$. If ${\bf m}=0$, let $b=\min\{s\,|\,n_s\ne 0\}$. Note that $s-b-\frac{1}{2}\ge p+\frac{3}{2}$. Thus $G_{s-b-\frac{1}{2}}M=0$ by \eqref{M}. One can see that $deg(G_{s-b-\frac{1}{2}}v)=({\bf n}-\epsilon_b,{\bf m})$. Hence,  $W$ is a simple $\mathfrak{b}$-module. Since  $p\ge 0,q\ge 0$, we have $p<s$ and $q<s$.
Then, one can further obtain that
\begin{equation}\label{W-irr}\left\{\begin{array}{l}
 T_s \mbox{ acts injectively on } W,\\
T_j(W)=0,j>s, \quad  L_j(W)=0,j\ge s,\\
 G_{k+\frac{1}{2}}(W)= Q_{k+\frac{1}{2}}(W)=0, k\ge s.\end{array}
\right.
\end{equation}
Now the simplicity of $Ind_\ell(V)$  follows from Lemma \ref{Tb-irr}.

\begin{rema}  To prove the simplicity of $Ind_\ell(V)$ for cases (ii) $p\ge 0,q< 0$; (iii) $p\le 0,q\ge 0$, we  need the condition $\max\{p,q\}<\max\{r,s\}$.
\end{rema}

\begin{theo}\label{L-th} Let $V$ be a simple $\mathcal{L}_{0,p,q}$-module.  Assume that there exist $r,s\in\Z_{\ge 0}, p,q\in\Z$ with  $\max\{p,q\}<\max\{r,s\}$ such that $L_r,T_s$ act injectively on $V$, and  $L_iV=T_jV=G_{k+\frac{1}{2}}V=Q_{l+\frac{1}{2}}V=0$ for all $i>r,j>s,k>p,l>q$.
If $0\le s<r,\ p+q+3=r+1,$ then for all $\ell\in\C$, $Ind_\ell(V)$ is a simple $\mathcal{L}$-module.
\end{theo}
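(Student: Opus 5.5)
Since here $V$ is already an $\mathcal{L}_{0,p,q}$-module, no intermediate induction in the $T$-direction is needed. The plan is to mimic part (2) of Theorem \ref{T-th}: build a simple $\mathfrak{b}$-module $W$ inside $Ind_\ell(V)$ and apply Lemma \ref{Lb-irr} with the injective element $L_r$ (in place of $T_s$ and Lemma \ref{Tb-irr}). Because $Ind(V)\cong U(\mathcal{L})\otimes_{U(\mathfrak{b})}W$ by transitivity of induction, simplicity of $Ind_\ell(V)$ then follows from the lemma.

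As in Theorem \ref{T-th}, I would split into the four cases (i) $p,q\ge 0$, (ii) $p\ge 0>q$, (iii) $p<0\le q$, (iv) $p,q<0$, and treat (i) in detail. In case (i) set
$$W=\sum_{({\bf l,k})\in \Z_2^{q+1}\times\Z_2^{p+1}}Q^{\bf l}G^{\bf k}V,$$
with $Q^{\bf l}$, $G^{\bf k}$ the ordered products of $Q_{i+\frac12}$ ($0\le i\le q$) and $G_{i+\frac12}$ ($0\le i\le p$) as before. This is $U(\mathfrak{b})\otimes_{U(\mathcal{L}_{0,p,q})}V$, a $\mathfrak{b}$-module.

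To prove $W$ is simple, take $v\in W\setminus V$ with leading term $(\bf n,m)$ in the principal order on $\Z_2^{q+1}\times\Z_2^{p+1}$.
\begin{itemize}
\item If ${\bf m}\neq0$ with $a=\min\{t\,|\,m_t\ne0\}$, act by $Q_{r-a-\frac12}$. Since $p+q+3=r+1$ gives $r-a-1\ge r-p-1=q+1>q$, this element kills $V$.
\item The bracket $[Q_{r-a-\frac12},G_{a+\frac12}]$ is $2L_r+(\text{const})T_r$ plus terms that vanish on $V$ (they have index $>r$, or equal $T_r$ with $r>s$). It therefore acts as $2L_r$ on $V$, which is injective.
\item Hence $deg(Q_{r-a-\frac12}v)=({\bf n,m}-\epsilon_a)$ with a nonzero leading coefficient. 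Symmetrically, if ${\bf m}=0$ use $G_{r-b-\frac12}$.
\end{itemize}
Iterating lands in $V$, which is simple; so any nonzero submodule of $W$ contains $V$ and hence equals $W$.

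Next I would verify the hypotheses of Lemma \ref{Lb-irr}:
\begin{itemize}
\item $L_r$ acts injectively on $W$. Use the same leading-term argument: $L_r$ commutes past $Q^{\bf l}G^{\bf k}$ up to terms of strictly smaller order, and $L_r$ is injective on $V$.
\item $L_iW=0$ for $i>r$.
\item $T_jW=0$ for $j\ge r$: $[T_j,G_{k+\frac12}]=G_{j+k+\frac12}$ has index $>p$ and kills $V$, and $s<r$.
\item $G_{j+\frac12}W=Q_{j+\frac12}W=0$ for $j\ge r$. Supercommute past the odd generators: the brackets $[G_{j+\frac12},Q_{i+\frac12}]$ give $L,T$ of index $>r$, killing $V$, while $G_{j+\frac12}$ itself kills $V$ since $j\ge r>p$.
\end{itemize}
For cases (ii)–(iv), drop the generators with negative bound. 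Note that if $q<0$, then $Q_{l+\frac12}$ with $l>q$ includes negative modes, so $V$ is already stable under more of $\mathcal{L}^-$. There one should build the $\mathfrak{b}$-module from the appropriate $\mathcal{L}_{0,p,q}\cap\mathfrak{b}$-structure, using $\max\{p,q\}<r$ to ensure the needed vanishings.

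The main obstacle is checking that the leading-term reduction in the simplicity of $W$ genuinely isolates $L_r$. The exact condition $p+q+3=r+1$ is what forces the paired odd indices to sum to exactly $r$ and no more. One must also confirm that the cross terms from the ordering of the odd generators drop strictly in the principal order; I would check this first for the lowest $a$. The secondary difficulty is cases (ii)/(iii), where the construction of $W$ needs care; I would try restricting $V$ to $\mathcal{L}_{0,p,q}\cap\mathfrak{b}$ and invoking Remark \ref{no-irr} to avoid needing simplicity of that restriction.
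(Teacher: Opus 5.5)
Your route is the paper's: the same $\mathfrak{b}$-module $W=\sum Q^{\bf l}G^{\bf k}V\cong U(\mathfrak{b})\otimes_{U(\mathcal{L}_{0,p,q})}V$, the same reducing operators $Q_{r-a-\frac{1}{2}}$ and $G_{r-b-\frac{1}{2}}$, the same vanishing conditions, and Lemma \ref{Lb-irr} with the injective element $L_r$. Your check of that lemma's hypotheses is correct. In fact $[L_r,Q^{\bf l}G^{\bf k}]V=0$ exactly, so $L_r$ acts on $W$ as $1\otimes L_r$.

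\textbf{The leading-term step fails as stated.} The step you flagged as the main obstacle does fail if the leading term is taken in the length-first principal order, which is what you and the paper both use. Take a support term containing $G_{i+\frac{1}{2}}$ with $i<a$. Its bracket with $Q_{r-a-\frac{1}{2}}$ is $2L_j+(\cdot)T_j$ with $0<j=r-a+i<r$. This does not kill $V$, and it can push a $G$ further to the right back into indices $\le p$, producing a monomial longer than the one you expect.

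Concretely, take $p=3$, $q=0$, $r=5$ and $v=G_{\frac{7}{2}}x+G_{\frac{3}{2}}G_{\frac{1}{2}}y$ with $0\ne x,y\in V$. The leading term is $G_{\frac{7}{2}}x$, so $a=3$, and
\[ Q_{\frac{3}{2}}v=2L_5x+2G_{\frac{7}{2}}y+2G_{\frac{1}{2}}L_3y-G_{\frac{3}{2}}(2L_2-T_2)y. \]
Its leading monomial is again $G_{\frac{7}{2}}$, because $[L_3,G_{\frac{1}{2}}]=G_{\frac{7}{2}}$. So the degree does not drop.

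\textbf{The repair.} Let $a$ be the smallest $G$-index occurring anywhere in the support of $v$, not just in the leading term. Equivalently, make the reverse-lexicographic comparison of the $G$-part primary. Then:
\begin{itemize}
\item no $G_{i+\frac{1}{2}}$ with $i<a$ occurs;
\item brackets with $G_{i+\frac{1}{2}}$, $i>a$, give $L_j,T_j$ with $j>r$, and their further brackets with the remaining $G$'s have index $>p$, so all of these vanish.
\end{itemize}
One then gets exactly
\[ Q_{r-a-\frac{1}{2}}v=\sum_{({\bf l},{\bf k}):\,k_a=1}\pm 2\,Q^{\bf l}G^{{\bf k}-\epsilon_a}L_rv_{{\bf l},{\bf k}}, \]
which is nonzero by injectivity of $L_r$ and has one fewer $G$ in every term. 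When no $G$ is left, do the same with $G_{r-b-\frac{1}{2}}$, where $b$ is the smallest $Q$-index in the support. Here $G_{r-b-\frac{1}{2}}V=0$ because $r-b-1\ge p+1$.

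\textbf{Cases (ii)--(iv).} Your secondary difficulty does not arise. From $\max\{p,q\}<r$ and $p+q+3=r+1$ you get $p,q\ge -1$. Hence $\mathcal{L}_{0,p,q}\subseteq\mathfrak{b}$ always, and it contains no negative modes. Case (iv) is empty, and cases (ii) and (iii) are case (i) with one family of odd generators absent. Restricting $V$ to $\mathcal{L}_{0,p,q}\cap\mathfrak{b}$ would not have helped anyway, since inducing from a smaller subalgebra does not produce $Ind(V)$.
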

{\it Proof.} The proof is similar to that of Theorem \ref{T-th}. We divid the proof into three cases: (i) $p\ge 0,q\ge 0$; (ii) $p\ge 0,q< 0$; (iii) $p\le 0,q\ge 0$. We prove  (i)  $p\ge 0,q\ge 0$ as an example, as the others can be proved similarly.

Let $\succ$ be the principal total order on $\Z_2^{q+1}\times \Z_2^{p+1}$  introduced in Section 2. Set
 $$W=\sum\limits_{({\bf l,k})\in \Z_2^{q+1}\times \Z_2^{p+1}}Q^{\bf l}G^{\bf k}v_{\bf l,k},$$
 where $v_{{\bf l,k}}\in V$, $$Q^{{\bf l}}=Q_{q+\frac{1}{2}}^{l_{q}}Q_{q-1+\frac{1}{2}}^{l_{q-1}}\cdots Q_{1+\frac{1}{2}}^{l_{1}} Q_{\frac{1}{2}}^{l_{0}}, \quad
 G^{{\bf k}}=G_{p+\frac{1}{2}}^{k_{p}}G_{p-1+\frac{1}{2}}^{k_{p-1}}\cdots G_{1+\frac{1}{2}}^{k_{1}} G_{\frac{1}{2}}^{k_{0}}.$$
It is clear that $W$ is a $\mathfrak{b}$-module.

 For any $v\in W\setminus V$, let $supp(v)$ be the set of all $(\bf l,k)$ with $v_{\bf l,k}\ne 0$, and $deg(v)$ be the maximal element of $supp(v)$ with respect to the above  total order $\succ$.  Let $deg(v)=(\bf n,m)$.
If ${\bf m}\ne 0$, let $a=\min\{s\,|\,m_s\ne 0\}$. It is follows from $p+q+3=r+1$ that  $r-a-\frac{1}{2}\ge q+\frac{3}{2}$. Thus $Q_{r-a-\frac{1}{2}}V=0$. By straight calculations, one can see that $deg(Q_{r-a-\frac{1}{2}}v)=({\bf n,m}-\epsilon_a)$. If ${\bf m}=0$, let $b=\min\{s\,|\,n_s\ne 0\}$. Note that $r-b-\frac{1}{2}\ge p+\frac{3}{2}$. Thus $G_{r-b-\frac{1}{2}}V=0$. One can see that $deg(G_{r-b-\frac{1}{2}}v)=({\bf n}-\epsilon_b,{\bf m})$. Hence,  $W$ is a simple $\mathfrak{b}$-module. Since  $p\ge 0,q\ge 0$, we have $p<r-1$ and $q<r-1$.
Then, one can further see that
\begin{equation}\label{W-irr}\left\{\begin{array}{l}
 L_r \mbox{ acts injectively on } W,\\
L_j(W)=0,j>r, \quad  T_j(W)=0,j\ge r,\\
 G_{k+\frac{1}{2}}(W)= Q_{k+\frac{1}{2}}(W)=0,\ k\ge r.\end{array}
\right.
\end{equation}
Now the simplicity of $Ind_\ell(V)$  follows from Lemma \ref{Lb-irr}.

\begin{theo} \label{LT-th}Let $V$ be a simple $\mathcal{L}_{0,p,q}$-module. Assume that there exist $r\in\Z_{\ge 1}, p,q\in\Z$ with $\max\{p,q\}<\max\{r,s\}$ such that $L_r,T_r$ act injectively on $V$, and  $L_iV=T_iV=G_{k+\frac{1}{2}}V=Q_{l+\frac{1}{2}}V=0$ for all $i>r,k>p,l>q$.
If  $p+q+3=r$, then for all $\ell\in\C$, $Ind_\ell(V)$ is a simple $\mathcal{L}$-module.
\end{theo}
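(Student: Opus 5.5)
The plan mirrors the proofs of Theorems \ref{T-th}(2) and \ref{L-th}: first induce $V$ up to a $\mathfrak{b}$-module, then invoke Lemma \ref{LTb-irr}. The work splits into cases by the signs of $p,q$ ($p\ge 0,q\ge 0$; $p\ge0,q<0$; $p<0,q\ge 0$). Since $p+q+3=r$ forces $\max\{p,q\}<r$, at least one of $p,q$ is nonnegative when $r\ge 2$. We treat $p,q\ge 0$ in detail; in the other cases one of the two odd families is absent or partly already in $\mathcal{L}_{0,p,q}$, and the same argument applies with fewer factors.

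\emph{Step 1: build $W$.} Set $W=Ind^{\mathfrak{b}}_{\mathcal{L}_{0,p,q}}(V)$, which by the PBW theorem is
$$W=\sum_{({\bf l,k})\in \Z_2^{q+1}\times \Z_2^{p+1}}Q^{\bf l}G^{\bf k}V .$$
Here $Q^{\bf l},G^{\bf k}$ are the ordered products of $Q_{j+\frac12}$ $(0\le j\le q)$ and $G_{j+\frac12}$ $(0\le j\le p)$, exactly as in the proof of Theorem \ref{L-th}. We order supports by the principal total order on $\Z_2^{q+1}\times\Z_2^{p+1}$.

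\emph{Step 2: $W$ is simple.} Take $v\in W\setminus V$ with $deg(v)=({\bf n,m})$.
\begin{itemize}
\item If ${\bf m}\neq0$, let $a=\min\{t\mid m_t\ne0\}$ and apply $Q_{r-a-\frac12}$. Since $p+q+3=r$, we get $r-a-\frac12\ge q+\frac32$, so $Q_{r-a-\frac12}V=0$.
\item If ${\bf m}=0$, apply $G_{r-b-\frac12}$ with $b=\min\{t\mid n_t\neq 0\}$.
\end{itemize}
In either case the bracket $[Q_{r-a-\frac12},G_{a-\frac12}]$ (respectively $[G,Q]$) equals
$$2L_r+(\text{coefficient})\,T_r+\text{central term}.$$
The other terms either annihilate $V$ or have strictly smaller degree.

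This is the main obstacle. Unlike Theorem \ref{L-th}, where only $L_r$ appeared, the leading coefficient is now the operator $2L_r+cT_r$. Its coefficient $c$ depends on $a$ (or $b$), and by the defining relation $[G_{m+\frac12},Q_{n+\frac12}]$ it has the form $\pm(2a-r+\dots)$. We therefore need this combination to act injectively on $V$, not merely $L_r$ and $T_r$ separately.

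I would first check whether these combinations coincide with the operators $L_r+(-2a-r+1)T_r$ and $L_r+(r+2b-1)T_r$ listed in Lemma \ref{LTb-irr}. If the hypotheses as stated do not supply this injectivity, I would strengthen the assumption to match Lemma \ref{LTb-irr}. Alternatively, since $V$ is simple, $L_r$ and $T_r$ commute, and both act injectively, I would try to use a Schur-type argument to show that $L_r$ and $T_r$ act as scalars $\lambda,\mu$ with $\mu\neq0$. Then $2L_r+cT_r$ is injective unless $2\lambda+c\mu=0$, which excludes only finitely many values of $c$; those few $a$ would need to be handled with a different element, such as $T_{r}$-brackets. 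Granting this, $deg$ drops strictly at each step, so $U(\mathfrak{b})v\cap V\neq0$ and $W$ is simple.

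\emph{Step 3: conditions of Lemma \ref{LTb-irr}.} Using $p,q<r$, one checks that $L_iW=T_iW=0$ for $i>r$ and $G_{j+\frac12}W=Q_{j+\frac12}W=0$ for $j\ge r$. One also checks that $L_r$, $T_r$ and the listed combinations $L_r+(-2a-r+1)T_r$, $L_r+(r+2b-1)T_r$ act injectively on $W$. The argument is by leading terms: modulo lower terms these elements commute past $Q^{\bf l}G^{\bf k}$, because $[L_r,Q_{j+\frac12}]$ and similar brackets land in degrees $\ge r+\frac12>q+\frac12$ and hence kill $V$ after reordering. Injectivity on $V$ then transfers to $W$.

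Finally, $Ind_\ell(V)\cong Ind_\ell(W)$ by transitivity of induction, so Lemma \ref{LTb-irr} gives the simplicity of $Ind_\ell(V)$ for all $\ell\in\C$.
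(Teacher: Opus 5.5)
\textbf{Verdict: there is a genuine gap.} Your overall architecture matches the paper's: induce $V$ to the $\mathfrak{b}$-module $W$, prove $W$ simple by a leading-term argument, verify the hypotheses of Lemma \ref{LTb-irr}, and conclude by transitivity of induction. But the step you flag as the main obstacle, injectivity of $2L_r+cT_r$ on $V$, is left open, and neither of your fixes works.

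\begin{itemize}
\item Strengthening the hypotheses does not prove the stated theorem.
\item The Schur-type argument fails because $L_r$ and $T_r$ are not $\mathcal{L}_{0,p,q}$-module endomorphisms of $V$: $L_0\in\mathcal{L}_{0,p,q}$, and $[L_0,L_r]=-rL_r$, $[L_0,T_r]=-rT_r$.
\item In fact $T_r$ can never act on $V$ as a nonzero scalar $\mu$, since then $-r\mu v=[L_0,T_r]v=0$. So the conclusion you hoped for is impossible.
\item The fallback for ``the finitely many bad $a$'' is not an argument.
\end{itemize}

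The missing idea is a one-line consequence of $p+q+3=r$. Both $G_{p+\frac32}$ and $Q_{q+\frac32}$ annihilate $V$, hence so does
$$[G_{p+\frac32},Q_{q+\frac32}]=2L_r+(p-q)T_r .$$
There is no central term since $r\ge 1$. Thus on $V$ you have $2L_r+\alpha T_r=(\alpha-(p-q))T_r$, which is injective for every $\alpha\ne p-q$ because $T_r$ is.

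The coefficients that actually occur never equal $p-q$:
\begin{itemize}
\item From $[Q_{r-a-\frac12},G_{a+\frac12}]$ with $0\le a\le p$ you get $2a-r+1$, and $2a-r+1=p-q$ iff $a=p+1$.
\item From $[G_{r-b-\frac12},Q_{b+\frac12}]$ with $0\le b\le q$ you get $r-2b-1$, and $r-2b-1=p-q$ iff $b=q+1$.
\end{itemize}
This is exactly what makes the degree drop in your Step 2. It also supplies the injectivity of the combinations $2L_r+\alpha T_r$ ($\alpha\neq p-q$) on $W$ that you need in Step 3. The same relation shows $p\ne q$, since otherwise $L_r$ would kill $V$.

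Two smaller points. First, $p+q+3=r$ does not by itself force $\max\{p,q\}<r$; take $r=2$, $p=5$, $q=-6$. That inequality is an explicit hypothesis of the theorem. Second, the partner of $Q_{r-a-\frac12}$ should be $G_{a+\frac12}$, not $G_{a-\frac12}$, so that the indices sum to $r$.
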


{\it Proof.}  Observe that  $p+q+3=r$ implies that
\begin{equation}\label{p-q}
(2L_r+(p-q)T_r)v=[G_{p+\frac{3}{2}},Q_{q+\frac{3}{2}}]v=0,\quad  \forall v\in V.
\end{equation}
Since $L_r,T_r$ act injectively on $V$, from \eqref{p-q} we have
\begin{equation}\label{=s}
(2L_r+\a T_r)v\ne 0,\quad  \forall \a\ne p-q, 0\ne  v\in V.
\end{equation}

We divide the proof  into the following four cases: (i) $p\ge 0,q\ge 0$; (ii) $p\ge 0,q< 0$; (iii) $p<0,q\ge 0$; (iv) $p<0,q<0$. We prove  (i) $p\ge 0,q\ge 0$ as an example, as the others can be proved similarly.

 Let $\succ$ be the principal  total order on $\Z_2^{q+1}\times \Z_2^{p+1}$ which is introduced in Section 2. Set
 $$W=\sum\limits_{({\bf l,k})\in \Z_2^{q+1}\times \Z_2^{p+1}}Q^{\bf l}G^{\bf k}v_{\bf l,k},$$
 where $v_{{\bf l,k}}\in V$, $$Q^{{\bf l}}=Q_{q+\frac{1}{2}}^{l_{q}}Q_{q-1+\frac{1}{2}}^{l_{q-1}}\cdots Q_{1+\frac{1}{2}}^{l_{1}} Q_{\frac{1}{2}}^{l_{0}}, \quad
 G^{{\bf k}}=G_{p+\frac{1}{2}}^{k_{p}}G_{p-1+\frac{1}{2}}^{k_{p-1}}\cdots G_{1+\frac{1}{2}}^{k_{1}} G_{\frac{1}{2}}^{k_{0}}.$$
It is clear that $W$ is a $\mathfrak{b}$-module.

 For any $v\in W\setminus V$, let $supp(v)$ be the set of all $(\bf l,k)$ with $v_{\bf l,k}\ne 0$, and $deg(v)$ be the maximal element of $supp(v)$ with respect to the above  total order $\succ$.  Let $deg(v)=(\bf n,m)$.
If ${\bf m}\ne 0$, let $a=\min\{s\,|\,m_s\ne 0\}$. It is follows from $p+q+3=r$ that  $r-a-\frac{1}{2}\ge q+\frac{3}{2}$. Thus $Q_{r-a-\frac{1}{2}}V=0$. Since $a\le p$ and $p+q+3=r$, we obtain $2a-r+1\ne p-q$. It follows from \eqref{=s} that
 $$[Q_{r-a-\frac{1}{2}}, G_{a+\frac{1}{2}}]w=(2L_r+(2a-r+1)T_r)w\ne 0,\quad \forall 0\ne  w\in V.$$
 Then by straight calculations, one can see that $deg(Q_{r-a-\frac{1}{2}}v)=({\bf n, m}-\epsilon_a)$. If ${\bf m}=0$, let $b=\min\{s\,|\,n_s\ne 0\}$. Note that $r-b-\frac{1}{2}\ge p+\frac{3}{2}$ and $G_{r-b-\frac{1}{2}}V=0$.
  Since $b\le q$ and $p+q+3=r$, we obtain  $r-2b -1\ne p-q$. Thus from \eqref{=s} we have
 $$[G_{r-b-\frac{1}{2}}, Q_{b+\frac{1}{2}}]w=(2L_r+(r-2b-1)T_r)w\ne 0,\quad \forall 0\ne  w\in V.$$ Then one can see that $deg(G_{r-b-\frac{1}{2}}v)=({\bf n}-\epsilon_b,{\bf m})$. Hence,  $W$ is a simple $\mathfrak{b}$-module. Since  $p\ge 0,q\ge 0$, we have $p<r$ and $q<r$.
Then, one can further observe that
\begin{equation}\label{=sirr}\left\{\begin{array}{l}
 L_r, T_r \mbox{ act injectively on } W,\\
  L_r+\a T_r,\ \a\ne p-q \mbox{ act injectively on } W,\\
L_j(W)=T_j(W)=0,j>r,\\
 G_{k+\frac{1}{2}}(W)= Q_{k+\frac{1}{2}}(W)=0, k\ge r.\end{array}
\right.
\end{equation}
Now the simplicity of $Ind_\ell(V)$  follows from Lemma \ref{LTb-irr}.

Similarly,  we have the following Theorem.

\begin{theo} \label{LT-th-r+1}Let $V$ be a simple $\mathcal{L}_{0,p,q}$-module. Assume that there exist $r\in\Z_{\ge 1}, p,q\in\Z$ with $\max\{p,q\}<\max\{r,s\}$  such that $L_r,T_r, L_r+\a T_r, \a\in\Z$ act injectively on $V$, and $L_iV=T_iV=G_{k+\frac{1}{2}}V=Q_{l+\frac{1}{2}}V=0$ for all $i>r,k>p,l>q$.
If  $p+q+3=r+1$, then for all $\ell\in\C$, $Ind_\ell(V)$ is a simple $\mathcal{L}$-module.
\end{theo}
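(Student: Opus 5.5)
The plan is to mirror the proof of Theorem \ref{LT-th}: first build a simple $\mathfrak{b}$-module $W$ from $V$, then apply Lemma \ref{LTb-irr}. I will treat in detail the case (i) $p\ge 0,q\ge 0$. The cases $p\ge0>q$, $p<0\le q$ and $p,q<0$ are handled the same way, with fewer (or no) odd generators adjoined in the $Q$- or $G$-direction.

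\textbf{Step 1: the module $W$.} Set $W=\sum_{({\bf l,k})\in\Z_2^{q+1}\times\Z_2^{p+1}}Q^{\bf l}G^{\bf k}v_{\bf l,k}$ with $v_{\bf l,k}\in V$, exactly as before, and order it by the principal total order $\succ$ on $\Z_2^{q+1}\times\Z_2^{p+1}$. It is clear that $W$ is a $\mathfrak{b}$-module. The relation $p+q+3=r+1$ gives the vanishing conditions we need:
\begin{itemize}
\item for $0\le a\le p$ we have $r-a-\frac12\ge q+\frac32$, so $Q_{r-a-\frac12}V=0$;
\item for $0\le b\le q$ we have $r-b-\frac12\ge p+\frac32$, so $G_{r-b-\frac12}V=0$.
\end{itemize}

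\textbf{Step 2: $W$ is simple.} Take $v\in W\setminus V$ with $deg(v)=({\bf n,m})$.
\begin{itemize}
\item If ${\bf m}\ne0$, let $a=\min\{s\,|\,m_s\ne 0\}$ and apply $Q_{r-a-\frac12}$. The leading contribution is
$$[Q_{r-a-\frac12},G_{a+\frac12}]w=(2L_r+(2a-r+1)T_r)w.$$
\item If ${\bf m}=0$, let $b=\min\{s\,|\,n_s\ne 0\}$ and apply $G_{r-b-\frac12}$. The leading contribution is $(2L_r+(r-2b-1)T_r)w$.
\end{itemize}
Here Theorem \ref{LT-th} used the identity $[G_{p+\frac32},Q_{q+\frac32}]V=0$ to exclude one value of $\a$. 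Under $p+q+3=r+1$ that identity is no longer available, since $G_{p+\frac32}$ and $Q_{q+\frac32}$ now bracket into degree $r+1$. This is exactly why the hypothesis assumes that $L_r+\a T_r$ acts injectively for \emph{every} $\a\in\Z$. Consequently both coefficients above act injectively, with no parity or exclusion argument needed, and we get $deg(Q_{r-a-\frac12}v)=({\bf n,m}-\epsilon_a)$ and $deg(G_{r-b-\frac12}v)=({\bf n}-\epsilon_b,{\bf m})$.

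To justify that this is the leading term, I would check that the lower-order terms only lower the degree. These come from moving the operator past the other factors of $Q^{\bf l}G^{\bf k}$, which produce $L_j,T_j$ with $j>r$ (these kill $V$) or $j\le r$ paired with fewer odd factors. Iterating Step 2 reaches $V$, and since $V$ is simple over $\mathcal{L}_{0,p,q}$ and $W$ is generated by $V$, $W$ is a simple $\mathfrak{b}$-module.

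\textbf{Step 3: the hypotheses of Lemma \ref{LTb-irr}.} I would verify that $W$ satisfies:
\begin{itemize}
\item $L_r$, $T_r$ and all $L_r+\a T_r$ act injectively on $W$. For $x=L_r+\a T_r$, $x$ commutes with $G^{\bf k}Q^{\bf l}$ modulo terms of higher $\succ$-order zero. Indeed $[L_r,G_{j+\frac12}]$ and $[T_r,G_{j+\frac12}]$ land in degree $\ge r+\frac12$, which kills $V$ as long as $p<r$, and $p<r$ follows from $p,q\ge0$ and $p+q+3=r+1$. Hence $x$ acts on the leading coefficient and is injective there.
\item $L_iW=T_iW=0$ for $i>r$ and $G_{j+\frac12}W=Q_{j+\frac12}W=0$ for $j\ge r$. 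These follow by commuting such elements past the $G$'s and $Q$'s and using the vanishing on $V$.
\end{itemize}
Lemma \ref{LTb-irr} then gives the simplicity of $Ind_\ell(W)\cong Ind_\ell(V)$.

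The main obstacle is Step 2: checking that the leading coefficient is really the injective operator $2L_r+\a T_r$, and not cancelled by other summands of the same degree. I would handle this exactly as in Claim 1, comparing $\min$-indices $a$ versus $a'$ across the summands of $supp(v)$.
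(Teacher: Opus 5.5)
Your outline is the paper's route: the paper proves this theorem ``similarly'' to Theorem~\ref{LT-th}, by forming $W=U(\mathfrak b)V$, showing it is a simple $\mathfrak b$-module, and applying Lemma~\ref{LTb-irr}. Your Step~3 is fine. Since $r>p,q$, each $x\in\mathrm{span}\{L_r,T_r\}$ acts by $Q^{\bf l}G^{\bf k}u\mapsto Q^{\bf l}G^{\bf k}xu$.

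The gap is the step you flag and defer to Claim~1. With the principal order (length first), the assertion $deg(Q_{r-a-\frac12}v)=({\bf n},{\bf m}-\epsilon_a)$ is false. Claim~1 works because everything acting nontrivially on $S$ has degree at most $s$, so applying $T_{s+a}$ can only push lengths down. Here the opposite happens. Take a summand whose smallest $G$-index is $a'<a$. Contracting $Q_{r-a-\frac12}$ against its factors (all indices $<r$ here) yields $L_j,T_j$ with $j<r$. These can shift another factor $G_{k+\frac12}$ to $G_{j+k+\frac12}$ with $j+k\le p$, i.e.\ to a factor of large length.

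Concretely, take $p=3$, $q=0$, $r=5$ and $v=G_{\frac72}u_1+G_{\frac32}G_{\frac12}u_2$ with $0\ne u_1,u_2\in V$. Then $deg(v)=({\bf 0},\epsilon_3)$, $a=3$, and
\[
Q_{\frac32}v=2(L_5+T_5)u_1+2G_{\frac72}u_2+2G_{\frac12}L_3u_2-G_{\frac32}(2L_2-T_2)u_2 .
\]
Its leading term $2G_{\frac72}u_2$ has the same degree as $v$. The prescribed next operator is again $Q_{\frac32}$, and $Q_{\frac32}^2=0$. So iterating Step~2 does not reach $V$; the paper's sketch for Theorem~\ref{LT-th} has the same weak point, so deferring to it does not help.

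The repair uses your own observation that the other contractions come ``with fewer odd factors'', but with an order that compares the number of odd factors $|{\bf l}|+|{\bf k}|$ \emph{first}. Let $d$ be the largest such number in $supp(v)$. Suppose some top monomial contains a $G$, and let $a$ be the smallest $G$-index occurring among the top monomials.
\begin{itemize}
\item $Q_{r-a-\frac12}$ kills $V$, since $r-a-1\ge q+1$. It anticommutes with the $Q$'s and sends a monomial with $e$ odd factors into the span of monomials with at most $e-1$ odd factors.
\item In a top monomial all $G$-indices are $\ge a$. A contraction with $G_{k+\frac12}$, $k>a$, produces $L_{r-a+k},T_{r-a+k}$ of degree $>r$. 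These kill $V$ and shift any $G_{k'+\frac12}$ to an index $>r>p$, so the contraction vanishes.
\item Hence the component of $Q_{r-a-\frac12}v$ with $d-1$ odd factors is $\sum\pm Q^{\bf l}G^{{\bf k}-\epsilon_a}\bigl(2L_r+(2a-r+1)T_r\bigr)v_{\bf l,k}$, summed over the top monomials with $k_a=1$. These are distinct PBW monomials, so this component is nonzero.
\end{itemize}
If all top monomials are pure $Q$-monomials, use $G_{r-b-\frac12}$ in the same way. After $d$ steps you land in $V\setminus\{0\}$.

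Finally, you need $2L_r+\beta T_r$ injective for $\beta=2a-r+1$ and $\beta=r-2b-1$. These are odd when $r$ is even, so injectivity of ``$L_r+\alpha T_r$, $\alpha\in\mathbb{Z}$'' does not give it. Your ``consequently both coefficients act injectively'' needs the hypothesis in the form $2L_r+\alpha T_r$, $\alpha\in\mathbb{Z}$, as in Theorem~\ref{MainTh}(3).
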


\begin{rema}\label{need-irr} If we drop the simplicity of $V$ in Theorem \ref{T-th} and  Theorems \ref{L-th}-\ref{LT-th-r+1}, one can also obtain a nonzero element in $V$ from any nonzero element in $Ind_\ell(V)\setminus V$.
\end{rema}


\section{Characterization of simple restricted modules}

In this section, we will give  precise characterizations of simple restricted modules over the $N=2$  Neveu-Schwarz algebra $\mathcal{L}$.

\begin{lemm}\label{N} Let $V$ be a nontrivial simple restricted $\mathcal{L}$-module. Then the following results hold.

{\rm (1)}  There exist $r,s,p,q\in\Z$ such that
$$N_{r,s,p,q}:=\{v\in V \,|\,L_iv=T_jv=G_{k+\frac{1}{2}}v=Q_{l+\frac{1}{2}}v=0, i>r,j>s,k>p,l>q\}\ne 0.
$$

{\rm (2)}  $r\ge -2$, $s\ge -1$, $p+q+3\ge -1$.

{\rm (3)} $r=-2$ implies  $s=-1,p=q=-2$.

{\rm (4)} $T_0(N_{r,s,p,q})\subset N_{r,s,p,q}$, $ C (N_{r,s,p,q})\subset N_{r,s,p,q}$.

{\rm (5)} There are  smallest integers $r,s,p,q$ such that $N_{r,s,p,q}\ne 0.$ From now on, we  use $ N_{r,s,p,q}$ to denote the following set
$$\{0\}\bigcup \left\{v\in V \ \bigg|\begin{array}{l}L_rv\ne 0,\quad T_sv\ne 0,\quad G_{p+\frac{1}{2}}v\ne 0,\quad  Q_{q+\frac{1}{2}}v\ne 0,\\
L_iv=T_jv=G_{k+\frac{1}{2}}v= Q_{l+\frac{1}{2}}v=0,\quad i>r,j>s,k>p,l>q.
\end{array}\right\}$$

 {\rm (6)} If $r\ge 0,s\ge 0$, then $L_r,T_s$ act injectively on  $N_{r,s,p,q}$.

\end{lemm}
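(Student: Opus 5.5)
We prove the six parts in order. Part (1) uses only the definition of restricted module; parts (2)--(4) are bracket computations inside $N_{r,s,p,q}$; parts (5)--(6) use simplicity of $V$.

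For (1), take any $0\ne v\in V$. By restrictedness there is $n$ with $\mathcal{L}_iv=0$ for all $i>n$. Then $v\in N_{n,n,n,n}$, so this space is nonzero.

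For (4), let $v\in N_{r,s,p,q}$. We have $[L_i,T_0]=0$, $[T_j,T_0]=0$ for $j\ne 0$, $[T_0,G_{k+\frac12}]=G_{k+\frac12}$ and $[T_0,Q_{l+\frac12}]=-Q_{l+\frac12}$. Hence each annihilator applied to $T_0v$ gives a scalar multiple of the same annihilator applied to $v$, which is $0$. (When $j=0>s$, i.e.\ $s<0$, $T_0$ already kills $v$.) Since $C$ is central, $C$ also preserves $N_{r,s,p,q}$.

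For (2) and (3), argue as follows.
\begin{itemize}
\item If $r\le -3$, then $L_{-1},L_{-2}$ and $L_1,L_2$ all kill $v$. Brackets such as $[L_1,L_{-1}]=2L_0$ and $[L_2,L_{-2}]=4L_0+\frac{C}{2}$ then generate $L_0$ and $C$, and hence all $L_m$, acting by zero on $v$.
\item Using $[L_m,T_n]=-nT_{m+n}$ and $[L_m,G]$, $[L_m,Q]$ with nonzero coefficients (choosing $m$ suitably), all of $\mathcal{L}$ annihilates $v$. So $\C v$ is a trivial submodule, and by simplicity $V$ is trivial, a contradiction. Thus $r\ge -2$.
\item If $s\le -2$, then $T_{-1}v=T_1v=0$, so $\frac{C}{3}v=[T_1,T_{-1}]v=0$. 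Then $T_0v=0$ by (4)-type brackets, and $[L_m,T_{-1}]=T_{m-1}$ shows all $T_n$ kill $v$. Next, $G_{k+\frac12}=[T_{k},G_{\frac12}]$-type relations force the odd part to vanish once $G_{1/2}v$ is controlled. Here I expect to use $[T_m,G]=G$ with $m$ large to move indices down into the annihilated range, and then $[G,Q]$ to get $L$'s.
\item For $p+q+3\ge -1$: if $p+q+3\le -2$, pick $k>p$, $l>q$ with $k+l+1$ ranging over $0,-1$ and also over positive values. Then $[G_{k+\frac12},Q_{l+\frac12}]v=0$ yields $2L_n+(k-l)T_n$ plus central terms acting by zero for two different values of $k-l$ at fixed $n$. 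This gives $L_nv=T_nv=0$ for all $n$, and $C$ kills $v$ from the central term. Again the module is trivial.
\item Part (3) is the same mechanism: $r=-2$ means $L_{-1}v=0$ and $L_n v=0$ for $n\ge -1$. Then $[L_{-1},T_{j}]$ shifts the index down, showing that $T_{-1}v=0$ would make the module trivial; this pins $s=-1$, and similarly $p=q=-2$.
\end{itemize}

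The main obstacle is the bookkeeping in (2)--(3). One must check, case by case, that the finitely many brackets used really have nonzero structure constants, e.g.\ $(\frac m2-r-\frac12)\ne0$ and $-n\ne 0$.

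For (5), fix $v$ with $N_{r,s,p,q}\ni v\ne0$. Decreasing each index one at a time while keeping the space nonzero, (2) gives the lower bounds $r\ge-2$ and $s\ge-1$. The lower bound $p+q+3\ge -1$ only bounds the sum, so to get individual lower bounds on $p$ and $q$ I would use $[L_m,G_{k+\frac12}]$: if $G$ vanished for all indices, then $[G,Q]$ would again produce a trivial module. Then choose the componentwise minimum. Minimality gives that vectors in the minimal space not killed by $L_r$ (etc.) exist, and the displayed set describes them.

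For (6), suppose $0\ne w\in N_{r,s,p,q}$ with $L_rw=0$. Then $w\in N_{r-1,s,p,q}$, contradicting the minimality of $r$ when $r\ge0$. The same argument applies to $T_s$ using the minimality of $s$. One needs $L_r$ to preserve the relevant annihilation conditions only to the extent of the definition, so injectivity on $N_{r,s,p,q}$ follows directly from minimality.
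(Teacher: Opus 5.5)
\textbf{The gap is in the bound $s\ge -1$.} Your step ``$[L_m,T_{-1}]=T_{m-1}$ shows all $T_n$ kill $v$'' does not work. From $[A,B]=X$ you may conclude $Xv=0$ only when $Av=Bv=0$. Here $T_{m-1}v=L_mT_{-1}v-T_{-1}L_mv=-T_{-1}L_mv$, and this has no reason to vanish, because $L_mv\ne 0$ is allowed for $m\le r$. Your plan for the odd part also points the wrong way. $[T_m,G_{k+\frac12}]=G_{m+k+\frac12}$ with $m$ large raises indices, but the range where $G$ is known to vanish is the high range $k>p$, and nothing controls $G_{\frac12}v$.

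The missing idea is to use a negative mode that you do know kills $v$, namely $T_{-1}$ (because $s\le -2$), against $G_{k+\frac12}$ with $k>p$:
\[
G_{k-\frac12}v=[T_{-1},G_{k+\frac12}]v=0 .
\]
Descending gives $G_{k+\frac12}v=0$ for all $k\in\Z$. Likewise $Q_{l-\frac12}v=-[T_{-1},Q_{l+\frac12}]v=0$ kills all $Q_{l+\frac12}$. Only then use $[G_{k+\frac12},Q_{l+\frac12}]v=0$ for all $k,l$. For $n=k+l+1\ne 0$, two values of $k-l$ give $L_nv=T_nv=0$. For $n=0$, the choices $k=-1,0,1$ give $L_0v=T_0v=Cv=0$. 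Hence $\mathcal{L}v=0$, which is exactly the paper's argument. As written, your part (3) (which needs $s=-1$) and the lower bound on $s$ in (5) rest on $s\ge -1$, so they inherit this gap. Your treatment of (1), (4), (6) and of $r\ge -2$ is fine.

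\textbf{Two smaller points.} First, in the argument for $p+q+3\ge -1$, pairs $k>p$, $l>q$ with $k+l+1=n$ exist only for $n\ge p+q+3$. So you get $L_nv=T_nv=0$ only for $n\ge -1$, together with $Cv=0$ from the at least three pairs at $n=0$, not ``for all $n$''. To finish, note that $T_{-1}v=0$ means $s\le -2$, and invoke the corrected $s$-argument. Second, in (5) a componentwise least tuple need not exist. Minimize one index at a time with the others fixed: for fixed $q$, the bound $p\ge -4-q$ bounds $p$ below. Decreasing a later index keeps the earlier ones minimal, since e.g.\ $N_{r-1,s',p,q}\subseteq N_{r-1,s,p,q}=0$ for $s'<s$. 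Your per-vector finiteness argument via $[G,Q]$ is then unnecessary, and it would not give a uniform lower bound anyway.
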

{\it Proof.}  (1) Since $V$ is a nontrivial restricted $\mathcal{L}$-module, it follows  that $N_{r,s,p,q}\ne 0$ for sufficiently large $r,s,p,q$.

(2) Assume that $r<-2$, then there exists $0\ne v\in N_{r,s,p,q}$ such that $L_iv=T_jv=G_{k+\frac{1}{2}}v=Q_{l+\frac{1}{2}}v=0, i\ge -2,j>s,k>p,l>q$.  Applying $L_{-2}$ to
$$L_{-1}v,L_{0}v, T_{s+1}v, T_{s+2}v,G_{p+\frac{3}{2}}v,G_{p+\frac{5}{2}}v, Q_{q+\frac{3}{2}}v,Q_{q+\frac{5}{2}}v$$  repeatedly,
one can obtain that $L_iv=T_iv=G_{i+\frac{1}{2}}v=Q_{i+\frac{1}{2}}=0$ for all $i\in\Z$. Hence,
$\mathcal{L}v=0$ and $V$ is a trivial module. This is a contradiction. Thus $r\ge -2$.

Assume that $s<-1$, then there exists $0\ne v\in N_{r,s,p,q}$ such that $L_iv=T_jv=G_{k+\frac{1}{2}}v=Q_{l+\frac{1}{2}}v=0, i>r,j\ge -1,k>p,l>q$. Applying $T_{-1}$ to $G_{p+\frac{3}{2}}v$ and $Q_{q+\frac{3}{2}}v$ repeatedly,
one can obtain that $G_{k+\frac{1}{2}}v=Q_{k+\frac{1}{2}}v=0$ for all $k\in\Z$. Using this fact we  further deduce  that $\mathcal{L}v=0$. This is a contradiction. Thus $s\ge -1$.

Assume that $p+q+3<-1$.  Then we can find at least two integer pairs $(k,l)$  satisfying $k>p,l>q, k+l+1=-1$. Thus for $0\ne v\in N_{r,s,p,q}$, we have
$$[G_{k+\frac{1}{2}},Q_{l+\frac{1}{2}}]v=(2L_{-1}+(k-l)T_{-1})v=0,$$
from which we obtain that
$L_{-1}v=T_{-1}v=0.$ Then one can further deduce that $V$ is a trivial module and obtain a contradiction. Hence, $p+q+3\ge -1.$

(3) Assume that $r=-2$, there exists $0\ne v\in N_{r,s,p,q}$ such that $L_iv=T_jv=G_{k+\frac{1}{2}}v=Q_{l+\frac{1}{2}}v=0, i\ge -1,j>s,k>p,l>q$. From (2) we know that $s\ge -1$. If $p<-2$, apply $L_{-1}$ to $G_{p+\frac{3}{2}}v$ repeatedly, one can obtain that $G_{i+\frac{3}{2}}v=0$ for all $i\in\Z$. From this fact one can further deduce that $T_{-1}v=0$ and $V$ is a trivial module. Hence, $p\ge -2$. Similarly, one can show that $q\ge -2$. Now applying $L_{-1}$ to $T_{s+2}v,G_{p+\frac{5}{2}}v,Q_{q+\frac{5}{2}}v$ repeatedly,
one can obtain that $T_iv=G_{k+\frac{1}{2}}v=Q_{k+\frac{1}{2}}v=0$ for all $i\ge 0, k\ge -1$, which implies that $s\le -1,p,q\le -2$. Thus we must have $s=-1,p=q=-2$. Hence,  $r=-2$ implies  $s=-1,p=q=-2$.

(4) It is easy to verify that $T_0 v, C v\in N_{r,s,p,q}$ for every $v\in N_{r,s,p,q}$.

(5) follows from (2).

 (6) Since $r\ge 0, s\ge 0$, we observe that $L_iv, T_jv\in N_{r,s,p,q}$ for all $i\ge 0,j\ge 0, v\in N_{r,s,p,q}.$ Assume that $L_r$ does not act injectively on $N_{r,s,p,q}$. One can find $0\ne w\in N_{r,s,p,q}$ such that $L_iw=0, i\ge r$, which contradicts the minimality of $r$. Thus $L_r$ acts injectively  on $N_{r,s,p,q}$. Similarly, one can show that
$T_s$ acts injectively  on $N_{r,s,p,q}$.

Below  we fix the  following {\bf convention}: for  each $0\ne v\in N_{r,s,p,q}$,  we use $r,s,p,q$ to denote the  smallest integers such that $L_iv=0,i>r,T_jv=0,j>s,G_{k+\frac{1}{2}}v=0, k>p,Q_{l+\frac{1}{2}}v=0,l>q$, and  use $N$ to denote $N_{r,s,p,q}$ for brevity.  For $w:=G_{p+\frac{1}{2}}v$,
 we use $r',s',p',q'$ denote the  smallest integers such that $L_iw=0,i>r',T_jw=0,j>s',G_{k+\frac{1}{2}}w=0, k>p', Q_{l+\frac{1}{2}}w=0,l>q'$.

From now on, we retain the notation of Lemma \ref{N} and the convention established above.

\begin{lemm}\label{hw} Let $V$ be a nontrivial simple restricted $\mathcal{L}$-module. If $r,s\le 0, p,q\le -1$, then $V$ is  a highest weight module.
\end{lemm}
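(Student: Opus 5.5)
Since $r,s\le 0$ and $p,q\le -1$, every nonzero $v\in N=N_{r,s,p,q}$ satisfies $L_iv=T_iv=G_{k+\frac12}v=Q_{k+\frac12}v=0$ for all $i\ge 1$ and $k\ge 0$. In other words, $\mathcal{L}^+N=0$. The plan is therefore to find in $N$ a common eigenvector of the Cartan subalgebra $\eta={span}\{L_0,T_0,C\}$. Such a vector is a highest weight vector, and simplicity of $V$ then forces $V=U(\mathcal{L})w=U(\mathcal{L}^-)w$, so $V$ is a highest weight module.

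\textbf{Step 1: $\eta$-stability.} First record that $N':=\{v\in V\mid \mathcal{L}^+v=0\}$ is nonzero and stable under $\eta$. By Lemma \ref{N}(4), $T_0$ and $C$ preserve $N$. For $L_0$ one checks directly: if $x\in\mathcal{L}^+$ is homogeneous, then $[x,L_0]$ is a scalar multiple of $x$, so $xL_0v=L_0xv+[x,L_0]v=0$. Since $C$ is central, it acts on the simple module $V$ by a scalar $\ell$ via the usual Schur-type argument (Dixmier's lemma, valid since $V$ has countable dimension).

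\textbf{Step 2: reduce to a finite-dimensional piece.} This is the key step. Take $0\ne v\in N'$. By simplicity, $V=U(\mathcal{L})v=U(\mathcal{L}^-)U(\eta)v$, using PBW and $\mathcal{L}^+v=0$. I would then show that $N'=U(\eta)v$. For $u\in U(\mathcal{L}^-)$ of positive length, the element $u\,U(\eta)v$ cannot lie in $N'$ unless it is zero. To see this, argue that $U(\mathcal{L}^-)_{<0}U(\eta)v$, where the subscript means the span of PBW monomials of positive length, is a proper submodule. It is stable under $\mathcal{L}^-$ and $\eta$ by construction; stability under $\mathcal{L}^+$ should follow by commuting $\mathcal{L}^+$ through, which needs an inductive argument on the length. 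The main obstacle is making this rigorous without assuming a weight decomposition.

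The cleaner route I would try first avoids the submodule argument. Let $\mathcal{A}$ be the commutative algebra generated by $L_0,T_0$ acting on $N'$. I would show $N'$ is a simple module over the centralizer-type algebra $U(\mathcal{L})_0/(U(\mathcal{L})\mathcal{L}^+)_0$, which acts through $U(\eta)$ on $N'$. Here $U(\mathcal{L})_0$ is the degree-zero part in the grading by $ad\,L_0$ eigenvalues, and the claim uses the standard fact that for a simple module $V$, the annihilated subspace $N'$ is a simple module over the zero-degree subalgebra. Granting this, $N'$ is a simple module over the commutative algebra $\mathbb{C}[L_0,T_0]$ of countable dimension, so by Dixmier/Schur it is one-dimensional.

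\textbf{Step 3: conclude.} Once $N'$ is one-dimensional, $N'=\mathbb{C}w$ with $L_0w=hw$, $T_0w=\lambda w$, $Cw=\ell w$ and $\mathcal{L}^+w=0$. Hence $V=U(\mathcal{L}^-)w$ is a highest weight module, indeed the simple quotient of the Verma module of highest weight $(h,\lambda,\ell)$.
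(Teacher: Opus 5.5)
\textbf{There is a genuine gap in Step 2.} Your Steps 1 and 3 are fine, but Step 2 carries the entire content of the lemma, and neither of your routes works.

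The first route fails because $U(\mathcal{L}^-)_{<0}U(\eta)v$ is not $\mathcal{L}^+$-stable. Raising a depth-$d$ term by an element of degree exactly $d$ lands back in $U(\eta)v$; for example $G_{\frac12}Q_{-\frac12}v=(2L_0+T_0)v$, which in a Verma module is nonzero and lies outside the lower weight spaces.

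The second route assumes as a ``standard fact'' that $N'=V^{\mathcal{L}^+}$ is simple over $U(\mathcal{L})_0/(U(\mathcal{L})\mathcal{L}^+)_0\cong U(\eta)$. The usual proof writes $w=uv$ and projects onto a weight space to keep only $u_0v$. That requires $V$ to be a weight module, which is exactly what you are trying to prove. Without a grading you only get $w=\pi(u_0)v+\sum_{d<0}u_dv$, and nothing prevents the negative-degree part from being a nonzero element of $N'$ outside $U(\eta)v$.

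In fact the ``fact'' is false in general. Take the Lie algebra with basis $d,x,y$ and brackets $[d,x]=x$, $[d,y]=-y$, $[x,y]=0$, with positive part $\mathbb{C}x$ and Cartan part $\mathbb{C}d$. Let it act on $\mathbb{C}[t]$ with $d$ acting as multiplication by $t$, $y$ acting by $f(t)\mapsto f(t+1)$, and $x$ acting by $0$.
\begin{itemize}
\item The module is simple: a nonzero submodule is an ideal $(g)$ with $g(t+1)\in(g)$, which forces $g$ to be constant.
\item Its $x$-invariants are all of $\mathbb{C}[t]$, which is not simple over $\mathbb{C}[d]$.
\item $d$ has no eigenvector, so the module is not highest weight.
\end{itemize}
So any proof must use how $[\mathcal{L}^+,\mathcal{L}^-]$ lands in $\eta$. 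In this example that pairing is zero.

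\textbf{What the paper does instead.} It uses exactly this input, through the brackets $[G_{k+\frac12},Q_{l+\frac12}]=2L_0+(k-l)T_0+\frac{C}{3}\bigl((k+\frac12)^2-\frac14\bigr)$ with $k+l+1=0$. It combines them with $L_0N=0$ when $r<0$, with $T_0N=0$ when $s<0$, and with the relations forced on $N$ when $p+q+3\le 0$. This gives a dichotomy.
\begin{itemize}
\item Either one of these Cartan elements has a nonzero kernel on $N$, which directly yields a common eigenvector of $L_0,T_0$.
\item Or they all act injectively. Then the PBW leading-term argument of Section 3 produces a nonzero element of $N$ from any nonzero element of $\ker(\mathrm{Ind}(N)\to V)$, so this kernel is $0$. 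Hence $N$ is simple over the inducing subalgebra, which acts on $N$ through $\mathbb{C}[L_0,T_0]$, and Dixmier gives $\dim N=1$.
\end{itemize}
In the hardest case $r=s=0$, $p=q=-1$, the paper must also pass to new vectors such as $G_{-\frac12}v_{0,\epsilon_1,0,0}$ or $G_{-\frac12}(2L_0+T_0+2)v_{0,\epsilon_1,0,0}$, which have smaller $p+q+3$.

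To repair your approach you would need a substitute for this analysis. For instance, you could show that $U(\eta)v$ is finite-dimensional for some $0\ne v\in N'$ via a Shapovalov/Kac-determinant argument. That argument would have to handle annihilators such as $(2L_0+T_0)$ that contain a determinant factor, which is where the paper's passage to vectors like $Q_{-\frac12}v$ or $G_{-\frac12}v$ becomes unavoidable.
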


{\it Proof.} Since $V$ is simple, $C$ acts on $V$ as a  scalar. If $r,s<0$, it is clear that $V$ is  a highest weight module. Assume that $s=0$ or $r=0$. Note that $T_0$ (resp. $L_0$) acts injectively on $N$ if $s=0$ (resp. $r=0$).

Set $\mathfrak{T}=span\{L_0, T_0\}$.  Note that  maximal ideals of $U(\mathfrak{T})$ must be of the form $(L_0-a,T_0-b), a,b\in \C$. If $N$ is a  simple $\mathfrak{T}$-module, one easily see that $L_0,T_0$ act on $N$ as scalars, and $V$ is  a highest weight module.

If  $p+q+3=-1$,  there are at least two integer pairs $(k,l)$ satisfying $k>p,l>q, k+l+1=0$. Then we have
$$[G_{k+\frac{1}{2}},Q_{l+\frac{1}{2}}]v=2L_0v+(k-l)T_0v+\frac{C}{3}\left((k+\frac{1}{2})^2-\frac{1}{4}\right)v=0,\quad \forall v\in N.$$
This forces that $L_0,T_0$ act on $N$ as scalars, and we are done. In what follows, we assume that $p+q+3\ge 0.$

If $s=0, r<0$, then $N$ is a $\mathcal{L}_{0,p,q}$-module. If  there exist $c\in\Z_{\ge 1}$  and $0\ne v\in N$ such that $[Q_{c-\frac{1}{2}},G_{-c+\frac{1}{2}}]v=0$ or $[G_{c-\frac{1}{2}},Q_{-c+\frac{1}{2}}]v=0$, one can easily see that $T_0v=\l v$ for some $\l\in\C$. Since  $V$ is simple and generated by $v$, $V$ must be a highest weight module.  Assume that
\begin{equation}\label{0hw}[Q_{c-\frac{1}{2}},G_{-c+\frac{1}{2}}]v\ne 0 \mbox{ and } [G_{c-\frac{1}{2}},Q_{-c+\frac{1}{2}}]v\ne 0 \quad \forall c\in\Z_{\ge 1}, 0\ne v\in N.
\end{equation} Since  $V$ is simple and generated by $N$, there exists  the canonical map
 $$\pi: Ind^{\mathcal{L}}_{\mathcal{L}_{0,p,q}}(N)\rightarrow V,\quad \pi(1\otimes v)= v,\quad v\in N.$$
 Let $K=ker(\pi)$. It is clear that  $\pi$ is surjective and  $K\cap N=0$. If $K\ne 0$, then using \eqref{0hw} and arguments similar to those used in Theorem \ref{T-th} (cf. Lemma \ref{Tb-irr}), one can obtain  a nonzero element in $N$ from any nonzero element in $K$, which yields a contradiction. This proves $K=0$ and $V\cong Ind^{\mathcal{L}}_{\mathcal{L}_{0,p,q}}(N).$ By the property of induced modules, one can see that $N$ is a simple $\mathcal{L}_{0,p,q}$-module. Furthermore, $N$ is a simple $\mathfrak{T}$-module, and we are done.

If $r=0, s<0$, then $N$ is a $\mathcal{L}_{0,p,q}$-module. If  there exist $c\in\Z_{\ge 1}$  and $0\ne v\in N$ such that $[Q_{c-\frac{1}{2}},G_{-c+\frac{1}{2}}]v=0$ or $[G_{c-\frac{1}{2}},Q_{-c+\frac{1}{2}}]v=0$, one can easily see that $L_0v=\l v$ for some $\l\in\C$. It follows that  $V$ must be a highest weight module.  Assume that
$[Q_{c-\frac{1}{2}},G_{-c+\frac{1}{2}}]v\ne 0$ and $[G_{c-\frac{1}{2}},Q_{-c+\frac{1}{2}}]v\ne 0 $ for all $c\in\Z_{\ge 1}$ and all nonzero $v\in N$.
Then one can similarly show that  $V\cong Ind^{\mathcal{L}}_{\mathcal{L}_{0,p,q}}(N)$ and  $N$ is a simple $\mathcal{L}_{0,p,q}$-module. Furthermore, $N$ is a simple $\mathfrak{T}$-module, and we are done.

If $s=r=0,\, p+q+3=0$,   then $N$ is also a $\mathcal{L}_{0,p,q}$-module. Similarly, starting from any nonzero element of $\mathrm{Ind}^{\mathcal{L}}_{\mathcal{L}_{0,p,q}}(N)\setminus N$,
one can obtain a nonzero element in $N$. This yields $V\cong \mathrm{Ind}^{\mathcal{L}}_{\mathcal{L}_{0,p,q}}(N)$ and further shows that $N$ is a simple $\mathcal{L}_{0,p,q}$-module
(cf. Theorem \ref{LT-th} and Lemma \ref{LTb-irr}). Furthermore, $N$ is a  simple $\mathfrak{T}$-module, and we are done.

If $s=r=0,\, p+q+3=1$, then $p=q=-1$ and $N$ is a $\mathfrak{b}$-module.
By the PBW Theorem, each $v\in V\setminus N$ can be uniquely written in  the form
\begin{equation}
\sum\limits_{({\bf n',m',j',i'})\in \mathbb{S}_1\times \mathbb{S}_1\times \mathbb{S}_0\times \mathbb{S}_0}Q^{\bf n'}G^{\bf m'}T^{\bf j'}L^{\bf i'}v_{\bf n',m',j',i'},
\end{equation}
 where $v_{{\bf n',m',j',i'}}\in N.$ Let $supp(v)$ be the set of all $({\bf n',m',j',i'})$ with $v_{{\bf n',m',j',i'}}\ne 0$, and $deg(v)$ be the maximal element of $supp(v)$ with respect to the  order $\succ$ introduced in Section 2.  Let $deg(v)=({\bf n,m,j,i})$.
 Using similar arguments to those used in Section 3, one can prove  the following Claim.

{\bf Claim 3.}
(1) If ${\bf i}\ne 0$, let $a:=\min\{s\,|\,i_s\ne 0\}.$ Then $deg(T_{a}v)=({\bf n,m,j,i}-\epsilon_a).$

(2) If $ {\bf j}\ne 0$, let $b:=\min\{s\,|\,j_s\ne 0\}.$ Then  $deg(L_{b}v)=({\bf n,m,j}-\epsilon_b,{\bf i}).$

(3) If ${\bf m}\ne 0$, let $c:=\min\{s\,|\,m_s\ne 0\}.$  Assume further that $[Q_{c-\frac{1}{2}}, G_{-c+\frac{1}{2}}]v_{{\bf n,m,j,i}}\ne 0$. Then $deg(Q_{c-\frac{1}{2}}v)=({\bf n,m}-\epsilon_c,{\bf j,i}).$

(4) If $ {\bf n}\ne 0$, let $d:=\min\{s\,|\,n_s\ne 0\}.$  Assume further that $[G_{d-\frac{1}{2}}, Q_{-d+\frac{1}{2}}]v_{{\bf n,m,j,i}}\ne 0$. Then $deg(G_{d-\frac{1}{2}}v)=({\bf n}-\epsilon_d,{\bf m,j,i}).$

If  $[Q_{c-\frac{1}{2}}, G_{-c+\frac{1}{2}}]v_{{\bf n,m,j,i}}=0$ and $[G_{d-\frac{1}{2}}, Q_{-d+\frac{1}{2}}]v_{{\bf n,m,j,i}}=0$, one  easily sees that $L_0v_{{\bf n,m,j,i}}=\l v_{{\bf n,m,j,i}}, T_0v_{{\bf n,m,j,i}}=\mu v_{{\bf n,m,j,i}}$ for some $\l,\mu\in\C$, and we are done. Without loss of generality, we assume that $[Q_{c-\frac{1}{2}}, G_{-c+\frac{1}{2}}]v_{{\bf n,m,j,i}}=0$ and $[G_{d-\frac{1}{2}}, Q_{-d+\frac{1}{2}}]v_{{\bf n,m,j,i}}\ne 0$. From Claim 3 we can further assume that ${\bf i}= {\bf j}={\bf n}=0$.
If $c>1$, it is not difficult to see that $deg(T_1v)=({\bf 0},{\bf m}-\epsilon_c+\epsilon_{c-1},{\bf 0,0}).$  Combining this with Claim 3, one can similarly  show that
$V\cong Ind^{\mathcal{L}}_{\mathfrak{b}}(N)$ and $N$ is a simple $\mathfrak{b}$-module. Furthermore, $N$ is a simple $\mathfrak{T}$-module, and we are done.

If $c=1$, ${\bf w(n,m,j,i)}>\frac{1}{2}$ and there are $\a,\b\in \Z_{>0}$ with $\b>\a+1$ such that $m_1=\cdots=m_{\a-1}=m_\a=m_\b=1$ and $m_{\a+1}=\cdots=m_{\b-1}=0$, it is not difficult to see that $deg(T_1v)=({\bf 0},{\bf m}-\epsilon_\b+\epsilon_{\b-1},{\bf 0,0}).$ Then one can similarly show that $V\cong Ind^{\mathcal{L}}_{\mathfrak{b}}(N)$ and $N$ is a simple $\mathfrak{b}$-module. Thus we are done for this case.

If $c=1$, ${\bf w(n,m,j,i)}>\frac{1}{2}$ and ${\bf m}=\epsilon_1+\epsilon_2+\cdots +\epsilon_\a$ for some $\a\in\Z_{>1}$,  straightforward computation yields
\begin{eqnarray*}Q_{2-\frac{1}{2}}v&=&G^{({\bf 0},{\bf m}-\epsilon_2,{\bf 0}, {\bf 0})}\left((2L_0-3T_0+\frac{2C}{3}+\xi)v_{{\bf n,m,j,i}}\right)\\&+& \sum\limits_{({\bf n'',m'',j'',i''})\ne ({\bf 0},{\bf m}-\epsilon_2,{\bf 0}, {\bf 0})}Q^{\bf n''}G^{\bf m''}T^{\bf j''}L^{\bf i''}v_{\bf n'',m'',j'',i''},\end{eqnarray*}
where $\xi\in\C$ and $v_{\bf n'',m'',j'',i''}\in N$. If $(2L_0-3T_0+\frac{2C}{3}+\xi)v_{{\bf n,m,j,i}}=0$, using the relation $[Q_{c-\frac{1}{2}}, G_{-c+\frac{1}{2}}]v_{{\bf n,m,j,i}}=0$,  one  easily sees that $L_0v_{{\bf n,m,j,i}}=\l v_{{\bf n,m,j,i}}, T_0v_{{\bf n,m,j,i}}=\mu v_{{\bf n,m,j,i}}$ for some $\l,\mu\in\C$, and we are done.   If $(2L_0-3T_0+\frac{2C}{3}+\xi)v_{{\bf n,m,j,i}}\ne 0$, then  $deg(Q_{2-\frac{1}{2}}v)<deg(v)$. By arguments similar to those above, one can further show that $V$ is a highest weight module.

If $c=1$ and ${\bf w(n,m,j,i)}=\frac{1}{2}$, then we have $[Q_{\frac{1}{2}},G_{-\frac{1}{2}}]v_{0,\epsilon_1,0,0}=0$ and $v=G_{-\frac{1}{2}}v_{0,\epsilon_1,0,0}+a Q_{-\frac{1}{2}}v_{\epsilon_1,0,0,0}$ for some $a\in\C$.
Then  $Q_{-\frac{1}{2}}v=Q_{-\frac{1}{2}}(G_{-\frac{1}{2}}v_{0,\epsilon_1,0,0}).$ If $Q_{-\frac{1}{2}}(G_{-\frac{1}{2}}v_{0,\epsilon_1,0,0})=0$, set $w_1:=G_{-\frac{1}{2}}v_{0,\epsilon_1,0,0}$. Then  we  have
$$L_iw_1=T_iw_1=0, i\ge 1,\quad G_{k+\frac{1}{2}}w_1=0,k\ge -2,\quad Q_{l+\frac{1}{2}}w_1=0,l\ge -2.$$
Thus corresponding to $w_1$, we obtain $r',s'\le 0, p',q'\le -2.$  Therefore, this case reduces to the case $p+q+3\le -1$, for  which we have already shown that $V$ is  a highest weight module.
If $Q_{-\frac{1}{2}}(G_{-\frac{1}{2}}v_{0,\epsilon_1,0,0})\ne 0$, then $$G_{\frac{1}{2}}Q_{-\frac{1}{2}}(G_{-\frac{1}{2}}v_{0,\epsilon_1,0,0})=(2L_0+T_0)(G_{-\frac{1}{2}}v_{0,\epsilon_1,0,0})=G_{-\frac{1}{2}}(2L_0+T_0+2)v_{0,\epsilon_1,0,0}.$$
If $(2L_0+T_0+2)v_{0,\epsilon_1,0,0}=0$, using the relation $[Q_{\frac{1}{2}},G_{-\frac{1}{2}}]v_{0,\epsilon_1,0,0}=0$, one easily sees that $L_0v_{0,\epsilon_1,0,0}=\lambda v_{0,\epsilon_1,0,0}, T_0v_{0,\epsilon_1,0,0}=\mu v_{0,\epsilon_1,0,0}$ for some \(\lambda,\mu\in\mathbb{C}\), and we are done. If $(2L_0+T_0+2)v_{0,\epsilon_1,0,0}\ne 0$, set $w_2:=G_{-\frac{1}{2}}(2L_0+T_0+2)v_{0,\epsilon_1,0,0}$. Then  we have
$$L_iw_2=T_iw_2=0, i\ge 1,\quad G_{k+\frac{1}{2}}w_2=0,k\ge -1,\quad Q_{l+\frac{1}{2}}w_2=0,l\ge 0.$$
Thus corresponding to $w$, we obtain $r',s'\le 0, p'\le -2$ and $q'\le -1.$  Therefore, this case reduces to the case $p+q+3\le 0$, for  which we have already shown that $V$ is a highest weight module.

\begin{rema} (1) For $0\ne v\in N$, we know that $G_{p+\frac{1}{2}}v\ne 0$ and  $Q_{q+\frac{1}{2}}v\ne 0$. Since $G^2_{p+\frac{1}{2}}=Q^2_{p+\frac{1}{2}}=0$, we have $G_{p+\frac{1}{2}}(G_{p+\frac{1}{2}}v)=0$ and $Q_{p+\frac{1}{2}}(Q_{p+\frac{1}{2}}v)=0$. Thus, $G_{p+\frac{1}{2}}v, Q_{q+\frac{1}{2}}v \notin N$. Consequently, when we replace $v$ with $G_{p+\frac{1}{2}}v$ or $Q_{q+\frac{1}{2}}v$, $N$ is no longer the original space.

(2) The arguments employed to treat the case $p+q+3=1$ in Lemma \ref{hw} do not apply to Theorem \ref{LT-th-r+1}.
\end{rema}

\begin{prop}\label{hwrs0} Let $V$ be a nontrivial simple  restricted $\mathcal{L}$-module.  If $r\le 0, s\le 0$, Then $V$ is a highest weight module.
\end{prop}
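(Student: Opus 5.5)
The plan is to reduce Proposition \ref{hwrs0} to Lemma \ref{hw}. That lemma already covers $r,s\le 0$, $p,q\le -1$, so what remains is to show that under $r,s\le 0$ we can always arrange $p,q\le -1$, after possibly replacing the vector $v$ by another nonzero vector of $V$. The key relation is
$$[G_{k+\frac12},Q_{l+\frac12}]=2L_{k+l+1}+(k-l)T_{k+l+1}+\tfrac{C}{3}\Big(\big(k+\tfrac12\big)^2-\tfrac14\Big)\d_{k+l+1,0}.$$
For $0\ne v\in N$ with $r,s\le 0$, every $L_i$ and $T_j$ with $i,j\ge 1$ kills $v$. Because $V$ is simple and nontrivial, Lemma \ref{N}(2) gives $p+q+3\ge -1$.

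Suppose, say, $p\ge 0$. I would replace $v$ by $w:=G_{p+\frac12}v\neq 0$ and track $r',s',p',q'$ as in the convention after Lemma \ref{N}.

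\emph{The $L_i$ and $T_i$ with $i\ge 1$.} For $i\ge 1$ we have $L_iw=[L_i,G_{p+\frac12}]v$, which is a multiple of $G_{p+i+\frac12}v=0$. Likewise $T_iw=G_{p+i+\frac12}v=0$. Hence $r',s'\le 0$.

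\emph{The $G$-modes.} For $k>p$, $G_{k+\frac12}w=-G_{p+\frac12}G_{k+\frac12}v=0$. For $k=p$, $G_{p+\frac12}w=0$ since $G^2=0$. So $p'\le p-1$.

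\emph{The $Q$-modes.} For $l>q$,
$$Q_{l+\frac12}w=[Q_{l+\frac12},G_{p+\frac12}]v=\big(2L_{p+l+1}+(p-l)T_{p+l+1}\big)v+(\text{central term}).$$
This vanishes when $p+l+1\ge 1$, i.e. $l\ge -p$, and also for $l>q$ when $p+l+1>0$. Therefore $q'\le \max\{q,-p-1\}$.

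Iterating this step strictly decreases $p$ while leaving $r,s\le 0$. The symmetric step uses $Q_{q+\frac12}v$ to decrease $q$. I would alternate the two steps, always lowering the larger of $p,q$. The hope is to reach $p,q\le -1$, at which point Lemma \ref{hw} applies and $V$ is a highest weight module.

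The main obstacle is that lowering $p$ can raise $q'$ up to $-p-1$. The danger is that $q'$ becomes large when $p$ is very negative, so the alternation might not terminate. To handle this I would order the steps: first reduce $p$ down to $-1$ using $G$-modes. Along the way $q'\le\max\{q,-p-1\}\le \max\{q,0\}$, so $q$ never exceeds $\max\{q,0\}$; one extra step from $p'=0$ could give $q'=\max\{q,-1\}$. Then reduce $q$ using $Q$-modes. Each such step gives $p''\le\max\{p',-q'-1\}$, and this stays $\le -1$ as long as $q'\ge 0$, which holds throughout the reduction of $q$ to $-1$.

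The delicate point is the last step, from $q=0$ to $q'=-1$: it only gives $p''\le\max\{p',-1\}=-1$, which is fine. Thus both $p$ and $q$ end up $\le -1$, and we apply Lemma \ref{hw}.
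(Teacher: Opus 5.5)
Your proof is correct. It rests on the same mechanism as the paper's: replace $v$ by $G_{p+\frac12}v$ (or $Q_{q+\frac12}v$), check that $r',s'\le 0$ survive, and finish with Lemma \ref{hw}. The difference is in the bookkeeping.

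The paper proceeds in two stages:
\begin{itemize}
\item It first applies $G$-steps only while $p+q+3>1$. In that range your bound $q'\le\max\{q,-p-1\}$ reduces to $q'\le q$, so $p+q+3$ strictly drops.
\item Once $p+q+3\le 1$, it assumes $p\ge q$ and splits into the subcases $p+q+3=-1,0,1$ allowed by Lemma \ref{N}(2). It applies Lemma \ref{N}(2) again to $w$ to get $p-3\le p'\le p-1$, enumerates the possible pairs $(p,q)$ for $p'\in\{-1,-2,-3\}$, and recurses when $p'\ge 0$.
\end{itemize}

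Your single inequality $q'\le\max\{q,-p-1\}$, its mirror $p'\le\max\{p,-q-1\}$ for the $Q$-step, and your two-phase ordering make termination transparent. They remove the need for the case enumeration and for the lower bound from Lemma \ref{N}(2), which you quote but never actually use.

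Your bound also shows why the paper's enumeration is superfluous. In each of its three subcases one automatically has $q'\le -p-1\le -1$. So as soon as $p'\le -1$, Lemma \ref{hw} applies whatever the particular values of $p,q$ are.
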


{\it Proof.}  Let $w=G_{p+\frac{1}{2}}v$ for some $0\ne v\in N$. If $p+q+3>1$, then we have
\begin{equation}\label{hw-1}L_iw=T_iw=0,i>0,\quad  G_{k+\frac{1}{2}}w=0,k\ge p,\quad  Q_{l+\frac{3}{2}}w=0, l\ge q.
\end{equation}
Thus corresponding to  $w$, we have $r'\le 0, s'\le 0, p'\le p-1, q'\le q$ and $p'+q'+3<p+q+3$. Repeating the above procedure if necessary, we may assume that  $r\le 0, s\le 0, p+q+3\le 1.$

Without  loss of generality, assume that $p\ge q$. If $p<0$, $V$ is a highest weight module and we are done.  Assume  $p\ge 0$ and  set $w=G_{p+\frac{1}{2}}v$.  From Lemma \ref{N} (2),  we have the following three subcases.

{\bf subcase 1.} $p+q+3=-1$. In this subcase, we have $q\le -4,$  and  $G_{k+\frac{1}{2}}w=0, k\ge p,$ $ Q_{l+\frac{3}{2}}w=0, l\ge q+3.$ Thus $p'\le p-1,q'\le q+3<0$. Since $V$ is nontrival, from Lemma \ref{N}, we also have
$$G_{p-3+\frac{1}{2}}w\ne 0, \quad Q_{q+\frac{3}{2}}w\ne 0.$$
Thus  corresponding to  $w$, we obtain $p-3\le p'\le p-1, q'\le q+3$ and $-1\le p'+q'+3\le 1.$

If $p'=-1$, then $(p,q)=(0,-4),(1,-5),(2,-6)$. We obtain that $q'\le -1,-2,-3$. Combining this with \eqref{hw-1} and Lemma \ref{hw}, one can  see that $V$ is a highest weight module.

If $p'=-2$, then $(p,q)=(0,-4),(1,-5)$. We obtain that $q'\le -1,-2$, and $V$ is a highest weight module.

If $p'=-3$, then $(p,q)=(0,-4)$. We obtain that $q'\le -1$, and $V$ is a highest weight module.

If $p'\ge 0$,  we arrive at the case $r\le 0,s\le 0, p+q+3\le 1$ with strictly smaller $p$.

{\bf subcase 2.} $p+q+3=0$. In this subcase, we have $q\le -3,$  and $G_{k+\frac{1}{2}}w=0, k\ge p,$ $Q_{l+\frac{3}{2}}w=0, l\ge q+2.$ Thus $p'\le p-1,q'\le q+2<0$. Again from Lemma \ref{N}, we also have
$$G_{p-3+\frac{1}{2}}w\ne 0, \quad Q_{q+\frac{1}{2}}w\ne 0.$$
Thus corresponding to $w$, we obtain $p-3\le p'\le p-1, q'\le q+2$ and $-1\le p'+q'+3\le 1.$

If $p'=-1$, then $(p,q)=(0,-3),(1,-4),(2,-5)$. We obtain that $q'\le -1,-2,-3$, and $V$ is a highest weight module.

If $p'=-2$, then $(p,q)=(0,-3),(1,-4)$. We obtain that $q'\le -1,-2$, and $V$ is a highest weight module.

If $p'=-3$, then $(p,q)=(0,-3)$. We obtain that $q'\le -1$, and $V$ is a highest weight module.

If $p'\ge 0$,  we arrive at the case $r\le 0,s\le 0, p+q+3\le 1$ with strictly smaller $p$ .

{\bf subcase 3.} $p+q+3=1$. In this subcase, we have $q\le -2,$ and  $G_{k+\frac{1}{2}}w=0, k\ge p,$ $Q_{l+\frac{3}{2}}w=0, l\ge q+1.$ Again from Lemma \ref{N}, we also have
$$G_{p-3+\frac{1}{2}}w\ne 0, \quad Q_{q-\frac{1}{2}}w\ne 0.$$
Thus  corresponding to  $w$, we obtain $p-3\le p'\le p-1, q'\le q+1$ and $-1\le p'+q'+3\le 1.$

If $p'=-1$, then $(p,q)=(0,-2),(1,-3),(2,-4)$. We obtain that $q'\le -1,-2,-3$, and $V$ is a highest weight module.

If $p'=-2$, then $(p,q)=(0,-2),(1,-3)$. We obtain that $q'\le -1,-2$, and $V$ is a highest weight module.

If $p'=-3$, then $(p,q)=(0,-2)$. We obtain that $q'\le -1$, and $V$ is a highest weight module.

If $p'\ge 0$,  we arrive at  the case $r\le 0,s\le 0, p+q+3\le 1$ with  strictly smaller $p$.

Repeating  the above procedures if necessary, we finally conclude  that $V$ is a highest weight module.

\begin{coro}\label{pq0} Let $V$ be a nontrivial simple  restricted $\mathcal{L}$-module.  If $p+q+3\le 0$, then $V$ is a highest weight module.
\end{coro}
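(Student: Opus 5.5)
Assume $p+q+3\le 0$, where $r,s,p,q$ are the minimal integers attached to some $0\ne v\in N$ as in Lemma \ref{N}. By Proposition \ref{hwrs0} it suffices to show $r\le 0$ and $s\le 0$; the whole argument is that few odd annihilators force many even ones. By Lemma \ref{N}(2), $p+q+3\in\{-1,0\}$.

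\textbf{Step 1: find annihilators of $v$.} Consider pairs $(k,l)$ with $k>p$, $l>q$. Since $G_{k+\frac12}v=Q_{l+\frac12}v=0$, we get $[G_{k+\frac12},Q_{l+\frac12}]v=0$. Fix $n\ge 1$. The condition $k+l+1=n$ together with $k\ge p+1$, $l\ge q+1$ requires $n\ge p+q+3$. The admissible $k$ range over $p+1\le k\le n-q-2$, giving $n-(p+q+3)+1\ge n+1\ge 2$ distinct values. So for every $n\ge 1$ at least two distinct differences $k-l$ occur, and each gives
$$\big(2L_n+(k-l)T_n\big)v=0.$$
The central term vanishes since $n\ne 0$. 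Subtracting two such relations with different $k-l$ gives $T_nv=0$, and then $L_nv=0$, for all $n\ge 1$.

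\textbf{Step 2: conclude.} By minimality, $r\le 0$ and $s\le 0$. Proposition \ref{hwrs0} then gives that $V$ is a highest weight module. Alternatively, since $p+q+3\le 0$ here, one can apply the earlier cases of Lemma \ref{hw} and Proposition \ref{hwrs0} directly.

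\textbf{Main check.} The only point needing care is the count in Step 1: $n\ge 1$ with $p+q+3\le 0$ must leave at least two admissible pairs. The count $n-(p+q+3)+1\ge 2$ settles this. The odd generators never enter Step 2 beyond citing Proposition \ref{hwrs0}, so no further obstacle is expected.
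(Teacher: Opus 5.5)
Your proposal is correct and follows essentially the same route as the paper. Both arguments find two pairs $(k,l)$ with $k>p$, $l>q$, $k+l+1=n\ge 1$, and use $[G_{k+\frac12},Q_{l+\frac12}]v=(2L_n+(k-l)T_n)v=0$ to get $L_nv=T_nv=0$ for all $n\ge 1$, hence $r\le 0$ and $s\le 0$, and then invoke Proposition \ref{hwrs0}. Your explicit count $n-(p+q+3)+1\ge 2$ just makes precise the step the paper asserts without detail.
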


{\it Proof.} If $p+q+3\le 0$,  there are at least two integer pairs $(k,l)$ satisfying $k>p,l>q, k+l+1=i\ge 1$. Then we have
$$[G_{k+\frac{1}{2}},Q_{l+\frac{1}{2}}]v=(2L_i+(k-l)T_i)v=0,\quad \forall v\in N.$$
This forces that $L_iv=T_iv=0,i>0$. Thus $r\le 0,s\le 0$. It is  from Proposition \ref{hwrs0} that $V$ is a highest weight module.

\begin{prop}\label{pq<}  Let $V$ be a nontrivial simple  restricted $\mathcal{L}$-module, but not a highest weight module. If $r>0$ or $s>0$, we can assume that $p+q+3\le \max\{r+1,s+1\}.$
\end{prop}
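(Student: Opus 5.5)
The plan is to mimic the descent used in Proposition \ref{hwrs0}: starting from $0\ne v\in N$ with minimal data $(r,s,p,q)$, I will replace $v$ by $w=G_{p+\frac{1}{2}}v$ (or $w=Q_{q+\frac{1}{2}}v$) whenever $p+q+3>\max\{r+1,s+1\}$. This should strictly decrease $p+q+3$ while keeping $r,s$ from increasing, so after finitely many steps the inequality will hold. Throughout, the standing convention after Lemma \ref{N} is used, so $(r',s',p',q')$ denotes the minimal data attached to $w$.

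\textbf{Step 1: $w\ne 0$ and the action of positive elements on $w$.} By Lemma \ref{N}(5), $G_{p+\frac{1}{2}}v\ne 0$, so $w\ne 0$. For $i>r$ and $L_i$, we have
$$L_iw=[L_i,G_{p+\frac{1}{2}}]v+G_{p+\frac{1}{2}}L_iv=c\,G_{i+p+\frac{1}{2}}v=0,$$
since $i+p>p$. In the same way $T_jw=0$ for $j>s$, and $G_{k+\frac{1}{2}}w=G_{k+\frac{1}{2}}G_{p+\frac{1}{2}}v=-G_{p+\frac{1}{2}}G_{k+\frac{1}{2}}v=0$ for $k\ge p$, using $[G,G]=0$ and $G_{p+\frac12}^2=0$. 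Hence $r'\le r$, $s'\le s$ and $p'\le p-1$.

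\textbf{Step 2: controlling $q'$; this is the key point.} For $l>q$,
$$Q_{l+\frac{1}{2}}w=[Q_{l+\frac{1}{2}},G_{p+\frac{1}{2}}]v-G_{p+\frac{1}{2}}Q_{l+\frac{1}{2}}v=(2L_{p+l+1}+(p-l)T_{p+l+1})v.$$
This vanishes as soon as $p+l+1>\max\{r,s\}$, that is, for $l\ge \max\{r,s\}-p$. So $q'\le \max\{q,\max\{r,s\}-p-1\}$. Under the hypothesis $p+q+3>\max\{r,s\}+1$ we get $\max\{r,s\}-p-1<q+1$, hence $q'\le q$. Therefore $p'+q'+3\le p+q+2<p+q+3$. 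Note that $(r',s')$ could drop, which would only make the target inequality $p+q+3\le\max\{r+1,s+1\}$ harder to reach. This is where the other hypotheses enter.

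\textbf{Step 3: handling possible drops of $r,s$ and termination.} If during the descent both $r'\le 0$ and $s'\le 0$, then Proposition \ref{hwrs0} makes $V$ a highest weight module, contradicting the hypothesis. Likewise, if $p'+q'+3\le 0$ at some step, Corollary \ref{pq0} gives the same contradiction. So along the descent we always have $\max\{r',s'\}>0$ and $p'+q'+3\ge 1$, and also $p'+q'+3\ge -1$ by Lemma \ref{N}(2). Each step either achieves $p'+q'+3\le\max\{r'+1,s'+1\}$, in which case we stop, or strictly decreases the integer $p+q+3$, which is bounded below. Hence the process terminates at some nonzero vector whose data satisfy the inequality. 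We then rename that vector $v$ and its data $(r,s,p,q)$; this is exactly what ``we can assume'' means in the statement.

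The main obstacle is Step 2, specifically the borderline values of $l$ where $p+l+1=\max\{r,s\}$. There the term $2L_{r}+(p-l)T_{r}$ (or the corresponding $T_s$-term) need not vanish. The fact that the strict inequality gives $l\ge\max\{r,s\}-p$ for all $l>q$ avoids these values, but I would double-check the parity and central-term cases (the central term appears only when $p+l+1=0$, which does not occur here since $\max\{r,s\}>0$). If the $G$-step were insufficient in some case, I would use the symmetric choice $w=Q_{q+\frac{1}{2}}v$.
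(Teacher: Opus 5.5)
Your proposal is correct and is essentially the paper's proof. As in the paper, you replace $v$ by $w=G_{p+\frac{1}{2}}v$, get $p'\le p-1$, and use $[G_{p+\frac{1}{2}},Q_{l+\frac{1}{2}}]v=(2L_{p+l+1}+(p-l)T_{p+l+1})v$ together with $p+q+3>\max\{r,s\}+1$ to get $q'\le q$; Proposition \ref{hwrs0} and Corollary \ref{pq0} then keep $\max\{r',s'\}>0$ and $p'+q'+3\ge 1$, so the strict descent of $p+q+3$ must terminate.

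One small slip in Step 1: the bounds $r'\le r$ and $s'\le s$ fail when $r=-1$ or $s=-1$. In that case $i=0>r$ does not give $i+p>p$, and indeed $L_0w=-(p+\frac{1}{2})w\ne 0$ (resp.\ $T_0w=w\ne 0$), so the correct bounds are $r'\le\max\{0,r\}$ and $s'\le\max\{0,s\}$, as the paper states. This is harmless, since $\max\{r,s\}>0$ and your termination argument never uses these bounds.
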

{\it Proof.} Assume that  $p+q+3> \max\{r+1,s+1\}$. Set $w=G_{p+\frac{1}{2}}v$ for some $0\ne v\in N$. Then we have
\begin{eqnarray*}&&\left\{\begin{array}{ll} L_iw=0,i>0,& \mbox{ if }r\le 0,\\
 L_iw=0,i>r,& \mbox{ if }r> 0,
\end{array}\right.\quad \left\{\begin{array}{ll} T_jw=0,j>0,& \mbox{ if }s\le 0,\\
 T_jw=0,j>s,& \mbox{ if }s> 0,\end{array}\right.\\
&& G_{k+\frac{1}{2}}w=0,k\ge p,\quad  Q_{l+\frac{3}{2}}w=0, l\ge q.
\end{eqnarray*}
Thus corresponding to  $w$, we have  $r'\le \max\{0,r\},s'\le \max\{0,s'\}$, $p'+q'+3<p+q+3.$  Note that $V$ is not a highest weight module.
It is follows from Proposition \ref{hwrs0} and Corollary \ref{pq0} that $r'>0$ or $s'>0$, and  $p'+q'+3\ge 1$.
Repeating the above procedures if necessary, we can finally obtain that $r>0$ or $s>0$, and $p+q+3\le \max\{r+1,s+1\}.$

\begin{rema} In the proof of Proposition \ref{pq<}, one can also replace $v$ by $Q_{q+\frac{1}{2}}v$.
\end{rema}

\begin{lemm}\label{<pq} If $r>0$ or $s>0$, then $p+q+3\ge \max\{r,s\}.$ Furthermore, if $s<r, 0<r$, then $p+q+3>r$.
\end{lemm}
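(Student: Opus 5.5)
We argue by contradiction, using the bracket $[G_{k+\frac12},Q_{l+\frac12}]=2L_{k+l+1}+(k-l)T_{k+l+1}$ (there is no central term when $k+l+1>0$) on a nonzero $v\in N$, with $r,s,p,q$ minimal as in Lemma \ref{N}. For $k>p$ and $l>q$ both odd elements kill $v$, so the bracket kills $v$:
$$(2L_m+(k-l)T_m)v=0\quad\mbox{whenever } k>p,\ l>q,\ k+l+1=m.$$
Writing $k=p+1+x$, $l=q+1+y$ with $x,y\ge 0$, we get $m=p+q+3+x+y$. So for every $m\ge p+q+3$ there are exactly $m-(p+q+3)+1$ admissible pairs. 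The corresponding coefficients $k-l$ are pairwise distinct, since $k-l$ changes by $2$ as $x$ increases.

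For the first inequality, suppose $p+q+3<\max\{r,s\}$, and set $M=\max\{r,s\}\ge 1$. Then $m=M$ admits at least two admissible pairs. Since $k-l$ takes two distinct values $\alpha\neq\beta$, the relations $(2L_M+\alpha T_M)v=0$ and $(2L_M+\beta T_M)v=0$ give $T_Mv=0$ and $L_Mv=0$. This contradicts the convention that $L_rv\ne0$ if $M=r$, or $T_sv\ne0$ if $M=s$. Hence $p+q+3\ge \max\{r,s\}$.

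For the refinement, assume $s<r$ with $r>0$, and suppose $p+q+3=r$; by the first part this is the only remaining case to exclude. Take $m=r$. There is exactly one admissible pair, $(k,l)=(p+1,q+1)$, giving $(2L_r+(p-q)T_r)v=0$. Since $r>s$, we have $T_rv=0$, hence $L_rv=0$, contradicting $L_rv\ne0$. Therefore $p+q+3>r$.

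The main point needing care is bookkeeping of the convention. The minimal indices $r,s,p,q$ are those attached to a fixed $0\ne v\in N$, so that $L_rv\neq0$ and $T_sv\neq 0$ hold for that $v$ (Lemma \ref{N}(5)). We also need the index $m$ used to be positive, so that no central term $\frac{C}{3}(\cdots)\delta_{m,0}$ enters the bracket. This holds because $M\ge1$ in the first part and $r>0$ in the second. No injectivity statements such as Lemma \ref{N}(6) are required.
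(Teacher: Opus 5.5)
Your proof is correct and is essentially the paper's argument. At level $\max\{r,s\}$, two admissible pairs with distinct values of $k-l$ force $L_{\max\{r,s\}}v=T_{\max\{r,s\}}v=0$; at level $r$, the vanishing $T_rv=0$ turns the single bracket into $2L_rv=0$. The only differences are cosmetic: you reduce the second claim to the case $p+q+3=r$ via the first part, whereas the paper chooses $k\ge p$, $l\ge q$ with $k+l+3=r$ directly, and you explicitly check that no central term appears, which the paper leaves implicit.
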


{\it Proof.} Assume that $p+q+3< \max\{r,s\}.$ Then there are at least two integer pairs $(k,l)$ satisfying  $k>p,l>q,k+l+1=\max\{r,s\}$. Thus for $0\ne v\in N$, we  have
$$[G_{k+\frac{1}{2}},Q_{l+\frac{1}{2}}]v=(2L_{\max\{r,s\}}+(k-l)T_{\max\{r,s\}})v=0,$$
which forces $L_{\max\{r,s\}}v=T_{\max\{r,s\}}v=0$. This is a  contradiction. Hence, $p+q+3\ge \max\{r,s\}.$

If $s<r, 0<r$, and $p+q+3\le r$, then one may choose $k,l\in\Z$ with $k\ge p, l\ge q, k+l+3=r$  such that
$$[G_{k+\frac{3}{2}},Q_{l+\frac{3}{2}}]v=(2L_{r}+(k-l)T_{r})v=2L_{r}v=0,\ \forall v\in N.$$
 This is a  contradiction. Hence,  $p+q+3>r$.

\begin{prop} Let $V$ be a nontrivial simple restricted $\mathcal{L}$-module.  Then $V$ must fall into one of the following cases:

{\rm (1)} $V$ is a highest weight module;

{\rm (2)} $0\le r<s,\ p+q+3=s,s+1$;

{\rm (3)} $0\le s<r,\ p+q+3=r+1$;

{\rm (4)} $0<r=s,\ p+q+3=s,s+1.$

In  cases {\rm (2)}-{\rm (4)}, we can further assume that $\max\{p,q\}< \max\{r,s\}$.
\end{prop}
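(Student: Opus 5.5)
The proof is a case analysis assembled from Proposition \ref{hwrs0}, Corollary \ref{pq0}, Proposition \ref{pq<} and Lemma \ref{<pq}. Suppose $V$ is not a highest weight module. By Proposition \ref{hwrs0}, $r>0$ or $s>0$. By Proposition \ref{pq<}, we may replace $v$ by a suitable $G_{p+\frac12}v$ (possibly iterating) so that $p+q+3\le \max\{r+1,s+1\}$. Lemma \ref{<pq} gives $p+q+3\ge\max\{r,s\}$. Hence $p+q+3\in\{\max\{r,s\},\max\{r,s\}+1\}$.

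We then split according to the order of $r$ and $s$. If $r<s$, then $s>0$; I need $r\ge0$, which should follow by arguing as in Lemma \ref{N}(2),(3). Concretely, $r\ge -2$, and $r=-2$ forces $s=-1$, contradicting $s>0$. If $r=-1$, then $L_0$ kills $v$, and commuting $L_0$ with $T_{s}$ and with $T_{s+1},\dots$ leaves room to reach a contradiction via $[L_{-1},\cdot]$-type arguments. If that fails, I would instead replace $v$ by $L_{-1}v$-type elements, or simply absorb $r\in\{-1,-2\}$ by noting that $N$ is still a module over the relevant subalgebra. This gives case (2). If $s<r$, then $r>0$ and Lemma \ref{<pq} gives $p+q+3>r$, so $p+q+3=r+1$. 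The bound $s\ge0$ is handled like $r\ge0$ above, using $s\ge -1$ from Lemma \ref{N}(2). This gives case (3). If $r=s$, both are positive and we get case (4).

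The extra claim is that $\max\{p,q\}<\max\{r,s\}$ can be arranged. Assume without loss of generality $p\ge q$ and $p\ge\max\{r,s\}$. Set $w=G_{p+\frac12}v$. As in the proof of Proposition \ref{pq<}, $L_i w=0$ for $i>\max\{r,0\}$, $T_j w=0$ for $j>\max\{s,0\}$, $G_{k+\frac12}w=0$ for $k\ge p$, and $Q_{l+\frac12}w=0$ for $l>q+p-\max\{r,s\}$ roughly. The last bound uses $[G_{p+\frac12},Q_{l+\frac12}]\in \C L_{p+l+1}+\C T_{p+l+1}$, which kills $v$ once $p+l+1>\max\{r,s\}$. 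Because $p\ge\max\{r,s\}$, this yields $q'\le q$ and $p'\le p-1$. Then $p'+q'+3$ strictly decreases, unless the new $r',s'$ also drop.

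Re-running the first part of the argument on $w$ (re-applying Proposition \ref{pq<} and Lemma \ref{<pq}) returns us to one of cases (2)--(4) with a strictly smaller value of $p$. Since $p+q+3$ is bounded below by $\max\{r',s'\}\ge1$, the process terminates, and at termination $\max\{p,q\}<\max\{r,s\}$.

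I expect the main obstacle to be this termination/bookkeeping step. When passing from $v$ to $w$, the invariants $r,s$ may change, and one must check that the new quadruple still avoids the highest weight case, which is guaranteed since $V$ is not highest weight. One must also check that some nonnegative quantity, such as $p+q+3-\max\{r,s\}$ or $p$ itself, strictly decreases. A secondary nuisance is excluding negative $r$ (resp. $s$) in cases (2) (resp. (3)).
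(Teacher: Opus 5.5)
\textbf{There is a genuine gap.} Your skeleton for the first part matches the paper: non-highest-weight gives $r>0$ or $s>0$ by Proposition~\ref{hwrs0}, then you force $p+q+3\in\{\max\{r,s\},\max\{r,s\}+1\}$, then you split by the order of $r,s$. The failure is the step that should give $r\ge 0$ in case (2) and $s\ge 0$ in case (3).

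No contradiction can be extracted from $r=-1<s$ by commuting relations on $v$. Take the irreducible highest weight module with highest weight vector $v_0$, $L_0v_0=-v_0$, $T_0v_0=0$, $C=c\neq 0$. Then $u=T_{-1}v_0$ satisfies $L_iu=0$ for all $i\ge 0$, yet $T_1u=\frac{c}{3}v_0\neq 0$. Accordingly, the paper treats $r=-1,\ s>0$ and $s=-1,\ r>0$ as genuine cases ((a), (b), then (i), (ii)).

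Your fallbacks do not close the gap. ``Absorbing'' $r\in\{-1,-2\}$ does not prove the asserted $0\le r$. Passing to $L_{-1}v$ does make $L_0$ act nontrivially, but it sends $s$ to $s+1$, since $T_{s+1}L_{-1}v=(s+1)T_sv\neq 0$, and typically sends $p,q$ to $p+1,q+1$. You would then have to re-establish both the window for $p+q+3$ and the bound on $\max\{p,q\}$, and you do not do this.

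The missing idea is the paper's: replace $v$ by $w=G_{p+\frac{1}{2}}v$.
\begin{itemize}
\item If $L_0v=0$, then $L_0w=-(p+\frac{1}{2})w\neq 0$ and $L_iw=0$ for $i>0$, so $r'=0$.
\item If $T_0v=0$, then $T_0w=w\neq 0$, so $s'=0$.
\item In either case $s'\le s$ (resp.\ $r'\le r$) and $p'\le p-1$.
\end{itemize}
One then re-checks the window. For example, when $p+q+3=s$, the relation $(2L_s+(p-q)T_s)v=0$ together with $T_sv\neq 0$ gives $Q_{q+\frac{5}{2}}w=(2L_s+(p-q-2)T_s)v\neq 0$. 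One iterates when $s'<s$, using Proposition~\ref{hwrs0} to keep $s'>0$.

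In the second part your estimate on $q'$ is wrong. For $l>q$ you have $Q_{l+\frac{1}{2}}w=[G_{p+\frac{1}{2}},Q_{l+\frac{1}{2}}]v$. This is only guaranteed to vanish when $p+l+1>M:=\max\{r,s\}$. Since $p+q+3\in\{M,M+1\}$, you only get $q'\le q+2$ (if $p+q+3=M$), respectively $q'\le q+1$ (if $p+q+3=M+1$). For instance, $Q_{q+\frac{3}{2}}w=(2L_M+(p-q-1)T_M)v$ need not vanish when $p+q+3=M+1$. The hypothesis $p\ge M$ plays no role in this bound. Hence $p'+q'+3$ may stay the same or grow by one, and your termination measure $p+q+3$ fails.

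What rescues the argument, as in the paper, is that $p\ge M$ and $p+q+3\le M+1$ force $q\le -2$. Then $q'\le q+2\le 0<\max\{r',s'\}$, the last inequality because $V$ is not a highest weight module. So $q$ never becomes the offending index, and you can induct on $p$ alone. This iteration must be interleaved with the repair above, because passing to $w$ can itself produce $r'=-1$ or $s'=-1$. For example, if $r=0$ then $L_0w=G_{p+\frac{1}{2}}(L_0-p-\frac{1}{2})v$ may vanish.
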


{\it Proof.} Assume that  $V$ is not a highest weight module. Then  from Lemma \ref{N} and Proposition \ref{hwrs0}, we have all the remaining cases:

 (a) $r=-1, s>0$;
(b) $s=-1,r>0$; (c) $0\le r<s$; (d) $0\le s<r$; (e) $0< s=r$.

 We  will show that  cases (a)-(e)  can be reduced to  cases (2)-(4) with $\max\{p,q\}< \max\{r,s\}$ after  replacing $v$ by some  appropriate vectors in $V$.

  (a) $r=-1, s>0$. Assume that $p+q+3>s+1$ and  $w=G_{p+\frac{1}{2}}v$ for some $0\ne v\in N$. Then
$$ L_0w\ne 0,\ L_iw=0,i>0; \quad T_jw=0, j>s, \quad G_{k+\frac{1}{2}}w=0,k\ge p\,; \quad  Q_{l+\frac{3}{2}}w=0, l\ge q.$$  Thus corresponding to $w$, we have $r'=0, s'\le s,p'\le p-1, q'\le q,$ and $p'+q'+3<p+q+3$. Since  $V$ is not a highest weight module, from Proposition \ref{hwrs0} we  have $s'>0$. Thus this case   reduces to  case (c) with  strictly smaller $p+q+3$.

  (b) $s=-1,r>0$. Assume that $p+q+3>r+1$ and  set  $w=G_{p+\frac{1}{2}}v$ for some $0\ne v\in N$. Then
 $$ L_iw=0,i>r; \quad T_0w\ne 0, T_jw=0,j>0;\quad G_{k+\frac{1}{2}}w=0,k\ge p; \quad  Q_{l+\frac{3}{2}}w=0, l\ge q.$$
Thus corresponding to $w$, we have $r'\le r, s'=0,p'\le p-1,q'\le q,$ and $p'+q'+3<p+q+3$.  Again by Proposition \ref{hwrs0}, we  have $r'>0$. Thus this case  reduces to case (d) with   strictly smaller $p+q+3$.

  In cases (c)-(e), assume that $p+q+3>\max\{s+1,r+1\}$ and set  $w=G_{p+\frac{1}{2}}v$ for some $0\ne v\in N$. Then we have
 $$ L_iw=0,i>r; \quad  T_jw=0,j>s;\quad G_{k+\frac{1}{2}}w=0,k\ge p; \quad  Q_{l+\frac{3}{2}}w=0, l\ge q.$$
Thus corresponding to $w$, we have $r'\le r, s'\le s,p'\le p-1, q'\le q,$  and $p'+q'+3<p+q+3$. Again by Proposition \ref{hwrs0}, we  have $r'>0$ or $s'>0$. Thus these cases   reduces to  cases (a)-(e)  with  strictly smaller $p+q+3$.

From Corollary \ref{pq0}, we always have $p'+q'+3\ge 1$.  Repeating the above procedures if necessary, we can assume that $p+q+3\le \max\{s+1,r+1\}$ in all cases (a)-(e).
Combining this with  Lemma \ref{<pq},  cases (a)-(e)  reduce to the following five cases:
$$\begin{array}{ll} {\rm (i)}\,  r=-1, s>0, p+q+3=s, s+1;& {\rm (ii)}\,  s=-1,r>0, p+q+3=r+1; \\
{\rm (iii)}\,  0\le r<s, p+q+3=s,s+1; & {\rm (iv)}\, 0<r=s, p+q+3=s,s+1; \\
{\rm  (v)} \, 0\le s<r, p+q+3=r+1.&
\end{array}
$$

Next, we will show that cases (i) and (ii) can be reduced to cases (iii)-(v).

(i) $r=-1, s>0, p+q+3=s, s+1$.   Set $w=G_{p+\frac{1}{2}}v$ for some $0\ne v\in N$.  If $p+q+3=s$, then  from
$[G_{p+\frac{3}{2}},Q_{q+\frac{3}{2}}]v=(2L_s+(p-q)T_s)v=0,$
we have
\begin{equation}\label{a=p-q}
(2L_s+\a T_s)v\ne 0,\quad \forall \a\ne p-q.
\end{equation}
Thus
$$ L_0w\ne 0,\ L_iw=0,i>0; \quad T_jw=0, j>s,\quad G_{k+\frac{1}{2}}w=0,k\ge p\,; $$
$$ Q_{q+\frac{5}{2}}w\ne 0, \quad  Q_{l+\frac{1}{2}}w= 0,\ l\ge q+3.$$  Hence corresponding to $w$, $r'=0, s'\le s.$
If $s'=s$, then $G_{p-\frac{1}{2}}w\ne 0$, $p'+q'+3=s'+1$, and  this case  reduces to  case (iii) with $p+q+3=s+1.$  If $s'<s$, it is follows from   $V$ is not a highest weight module and Proposition \ref{hwrs0} that $s'>0$. From Lemma \ref{<pq} we also have $p'+q'+3\ge s'$. If  $p'+q'+3=s',s'+1$,  we arrive at case (iii). If $p'+q'+3>s'+1,$ we can proceed as in cases (c)-(e) to obtain cases (i)-(v) with strictly smaller $p+q+3$. Repeating the above argument, we can finally arrive at  cases (iii)-(v).

If  $p+q+3=s+1$, then we have
$$ L_0w\ne 0,\ L_iw=0,i>0; \quad T_jw=0, j>s,\quad G_{k+\frac{1}{2}}w=0,k\ge p\,; $$
$$ Q_{q+\frac{3}{2}}w\ne 0, \quad  Q_{l+\frac{1}{2}}w= 0,\ l\ge q+2.$$  Thus corresponding to $w$, $r'=0, s'\le s.$
If $s'=s$, then $G_{p-\frac{1}{2}}w\ne 0$, $p'+q'+3=s'$, and  this case reduces to  case (iii) with $p+q+3=s.$  If $s'<s$, then $s'>0$ and $p'+q'+3\ge s'$. If  $p'+q'+3=s',s'+1$,  we arrive at case (iii).
If $p'+q'+3>s'+1$, we can again proceed as in cases (c)-(e) to obtain cases (i)-(v) with strictly smaller $p+q+3$. Repeating the above argument, we can finally arrive at  cases (iii)-(v).

Similarly, one can show that case (ii)  can be reduced to cases (iii)-(v).

Finally, we will show that one can further assume that $\max\{p,q\}< \max\{r,s\}$ in cases (2)-(4). The technique we will use is similar to the above. We note that the above procedures are consistent with those below.

(i) $r=-1, s>0, p+q+3=s, s+1$. If  $\max\{p,q\}\ge s$, say $p\ge s$, then $q\le -2$. Set $w=G_{p+\frac{1}{2}}v$ for some $0\ne v\in N$. Then we have
$$ L_0w\ne 0,\ L_iw=0,i>0; \quad T_jw=0, j>s,\quad G_{k+\frac{1}{2}}w=0,k\ge p\,; $$
$$\left\{\begin{array}{rl} Q_{q+\frac{5}{2}}w\ne 0, \quad  Q_{l+\frac{1}{2}}w = 0,\ l\ge q+3,& \mbox{ if } p+q+3=s,\\
 Q_{l+\frac{1}{2}}w= 0,\ l\ge q+2,& \mbox{ if } p+q+3=s+1.\end{array}\right.$$
 Thus corresponding to $w$, $r'=0, s'\le s,p'\le p-1, q'\le q+2.$
If $s'=s$ or $s'<s, p'+q'+3=s',s'+1$, we reduce to case (iii) with strictly smaller $p$ and $q\le 0<s$. Otherwise, from Proposition \ref{hwrs0} and Lemma \ref{<pq}, we have $0<s'<s, p'+q'+3>s'+1$ and $q'\le 0<p'$. Then we can proceed as in  cases (a)-(e) to obtain cases  (iii)-(v) with strictly smaller $p+q+3$ and $p$. Repeating the above argument, we can finally arrive at   cases (iii)-(v) with $\max\{p,q\}< \max\{r,s\}.$

(ii) $s=-1,r>0, p+q+3=r+1$. If  $\max\{p,q\}\ge r$, say $p\ge r$, then $q\le -2$. Set $w=G_{p+\frac{1}{2}}v$ for some $0\ne v\in N$. Then we have
$$ T_0w\ne 0,\ T_iw=0,i>0; \quad L_jw=0, j>r,\quad G_{k+\frac{1}{2}}w=0,k\ge p,\quad Q_{l+\frac{1}{2}}w= 0,l\ge q+2.$$
Thus corresponding to $w$, $s'=0, r'\le r,p'\le p-1, q'\le q+1.$
If $r'=r$ or $r'<r, p'+q'+3=r'+1$, we arrive at case (v) with strictly smaller $p$ and $q<0<r$. Otherwise, from Proposition \ref{hwrs0} and Lemma \ref{<pq}, we have $0<r'<r, p'+q'+3>r'+1$ and $q'<0<p'$. Then we can  again  proceed as in  cases (a)-(e) to obtain cases  (iii)-(v) with strictly smaller $p+q+3$ and $p$. Repeating the above argument, we can finally arrive at  cases (iii)-(v) with $\max\{p,q\}< \max\{r,s\}.$

(iii) $0\le r<s, p+q+3=s,s+1$. If  $\max\{p,q\}\ge s$, say $p\ge s$, then $q\le -2$. Set $w=G_{p+\frac{1}{2}}v$ for some $0\ne v\in N$. Then we have
$$ L_iw=0,i>r; \quad T_jw=0, j>s,\quad G_{k+\frac{1}{2}}w=0,k\ge p\,; $$
$$\left\{\begin{array}{rl} Q_{q+\frac{5}{2}}w\ne 0, \quad  Q_{l+\frac{1}{2}}w = 0,\ l\ge q+3,& \mbox{ if } p+q+3=s,\\
 Q_{l+\frac{1}{2}}w= 0,\ l\ge q+2,& \mbox{ if } p+q+3=s+1.\end{array}\right.$$
 Thus corresponding to $w$, $r'\le r, s'\le s,p'\le p-1, q'\le q+2\le 0.$
If $r'>\max\{s',0\}, p'+q'+3=r'+1$, we arrive at case (v) with strictly smaller $p$ and $q'\le 0$; if $s'>\max\{r',0\}, p'+q'+3=s',s'+1$, we arrive at  case (iii) with strictly smaller $p$ and $q'\le 0$; if $r'=s'>0, p'+q'+3=s',s'+1$, we arrive at case (iv) with strictly smaller $p$ and $q'\le 0$. Otherwise, from Proposition \ref{hwrs0} and Lemma \ref{<pq}, we have $r'>\max\{s',0\}, p'+q'+3>r'+1$; $s'>\max\{r',0\}, p'+q'+3>s'+1$, or $r'=s'>0, p'+q'+3>s'+1$. Then we can again proceed  as in  cases (a)-(e) to obtain cases  (iii)-(v) with strictly smaller $p+q+3$ and $p$. Repeating the above argument, we can finally arrive at  cases (iii)-(v) with $\max\{p,q\}< \max\{r,s\}.$

For cases (iv)  and  (v), using the similar argument as in case (iii), we can finally arrive at  cases (iii)-(v) with $\max\{p,q\}< \max\{r,s\}.$


We are now ready to state one of our main results in this section.

\begin{theo}\label{MainTh} Let $V$ be a nontrivial simple restricted  $\mathcal{L}$-module, but  not a highest weight module.
Recall $$N=\{v\in V \,|\,L_iv=T_jv=G_{k+\frac{1}{2}}v=Q_{l+\frac{1}{2}}v=0, i>r,j>s,k>p,l>q\}\ne 0,$$ and  $L_r,T_s$ act injectively on $N$. Then the following results hold.

{\rm (1)} If $0\le s<r,\, p+q+3=r+1,\ p,q<r,$ then $N$ is a simple $\mathcal{L}_{0,p,q}$-module, and $V\cong Ind^{\mathcal{L}}_{\mathcal{L}_{0,p,q}}(N).$

{\rm (2)} If  $0<r=s, \,p+q+3=r, \ p,q<r$, then $N$ is a simple $\mathcal{L}_{0,p,q}$-module, and $V\cong Ind^{\mathcal{L}}_{\mathcal{L}_{0,p,q}}(N).$

{\rm (3)}  If  $0<r=s, \,p+q+3=r+1,\ p,q<r,$  and  $2L_r+\a T_r$ with $\a\in\Z$ acts injectively on $N$, then $N$ is a simple $\mathcal{L}_{0,p,q}$-module, and $V\cong Ind^{\mathcal{L}}_{\mathcal{L}_{0,p,q}}(N).$

{\rm (4)}  If  $0\le r<s,\, p+q+3=s,s+1, \ p,q<s$, then $N$ is a simple $\mathcal{L}_{s-r,p,q}$-module, and $V\cong Ind^{\mathcal{L}}_{\mathcal{L}_{s-r,p,q}}(N).$
\end{theo}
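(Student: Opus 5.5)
The plan is the standard ``induced module'' argument, running the machinery of Section 3 backwards. In all four cases I first check that $N$ is a module over the relevant subalgebra $\mathfrak{a}$, which is $\mathcal{L}_{0,p,q}$ in (1)--(3) and $\mathcal{L}_{s-r,p,q}$ in (4). For $x\in\mathfrak{a}$ homogeneous of degree $\ge 0$ (for $G,Q$ only the indices $>p$, $>q$ occur), and for $v\in N$, the brackets $[L_i,x]$, $[T_j,x]$, $[G_{k+\frac12},x]$, $[Q_{l+\frac12},x]$ with $i>r$, $j>s$, $k>p$, $l>q$ land in the span of elements that kill $v$. The only delicate brackets are $[G_{k+\frac12},Q_{l+\frac12}]$, which produce $L_{k+l+1}$ and $T_{k+l+1}$. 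These land above the thresholds because $k+l+1\ge p+q+3\ge \max\{r,s\}$ (Lemma~\ref{<pq}). In case (4) the shift $d=s-r$ is needed so that $[L_i,T_{j+d}]=-(j+d)T_{i+j+d}$ with $i\ge0$ stays above $s$ whenever $T_{j+d}$ already does. Together with Lemma~\ref{N}(4), which handles $T_0$ and $C$, this shows $\mathfrak{a}N\subset N$.

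Since $V$ is simple and $N\ne0$, the canonical map
$$\pi: Ind^{\mathcal{L}}_{\mathfrak{a}}(N)\to V,\qquad 1\otimes v\mapsto v,$$
is surjective. Set $K=\ker\pi$; then $K\cap N=0$. I would then show that $K=0$ by invoking Remark~\ref{need-irr}. The proofs of Theorems~\ref{T-th}, \ref{L-th}, \ref{LT-th} and \ref{LT-th-r+1} do not use simplicity of $V$ to extract a nonzero element of $N$ from any nonzero element of $Ind_\ell(N)\setminus N$. They only use two kinds of hypotheses:
\begin{itemize}
\item the vanishing conditions above the thresholds $r,s,p,q$, together with $\max\{p,q\}<\max\{r,s\}$;
\item injectivity of the relevant operators on $N$.
\end{itemize}
The theorems match the cases of the statement as follows:
\begin{itemize}
\item case (1) corresponds to Theorem~\ref{L-th};
\item case (2) corresponds to Theorem~\ref{LT-th}, where \eqref{p-q} gives the needed injectivity of $2L_r+\alpha T_r$ for $\alpha\ne p-q$;
\item case (3) corresponds to Theorem~\ref{LT-th-r+1}, using the extra injectivity hypothesis on $2L_r+\alpha T_r$;
\item case (4) corresponds to Theorem~\ref{T-th}.
\end{itemize}
The injectivity of $L_r$ and $T_s$ on $N$ comes from Lemma~\ref{N}(6), and $C$ acts on $V$ by a scalar $\ell$. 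So $K$, being a submodule of $Ind_\ell(N)$ meeting $N$ trivially, must be zero. Hence $V\cong Ind^{\mathcal{L}}_{\mathfrak{a}}(N)$.

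Simplicity of $N$ as an $\mathfrak{a}$-module then follows formally. If $0\ne N'\subsetneq N$ were an $\mathfrak{a}$-submodule, then $U(\mathcal{L})\otimes N'$ would be a proper nonzero submodule of $Ind(N)\cong V$, by PBW freeness. This contradicts the simplicity of $V$.

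The main obstacle is the claim that the leading-term arguments transfer verbatim. In particular, in case (4) the two-step induction (first from $\mathcal{L}_{d,p,q}$ to $\mathcal{L}_{0,p,q}$ via $L_{s-a}$, then up to $\mathfrak{b}$, then to $\mathcal{L}$ via Lemma~\ref{Tb-irr}) needs the intermediate module $M$ to satisfy \eqref{M} without simplicity. I would verify this directly: the argument for \eqref{T-irr} uses only $L_{s-a}N=0$ for $s-a>r$, and the injectivity of $T_s$ on $M$ follows from its injectivity on $N$ together with a leading-term computation. The odd step needs nonvanishing of $[Q,G]$ and $[G,Q]$ on $N$, which is where the hypotheses $p,q<\max\{r,s\}$ and $p+q+3\in\{s,s+1\}$ enter.
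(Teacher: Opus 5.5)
Your outline is the paper's own proof. It uses the canonical surjection $\pi\colon Ind^{\mathcal{L}}_{\mathfrak{a}}(N)\to V$ with $\ker\pi\cap N=0$, then the non-simple versions of the Section 3 leading-term arguments (Remarks \ref{no-irr} and \ref{need-irr}) to force $\ker\pi=0$, and finally simplicity of $N$ from the freeness of $U(\mathcal{L})$ over $U(\mathfrak{a})$. Your matching of cases (1)--(4) with Theorems \ref{L-th}, \ref{LT-th}, \ref{LT-th-r+1} and \ref{T-th} is the intended one. One small correction: the check $\mathfrak{a}N\subset N$ involves no $[G,Q]$ brackets at all, because the odd part of $\mathfrak{a}$ annihilates $N$ by definition. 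Your claim that such brackets land strictly above the thresholds is also false when $k+l+1=\max\{r,s\}$.

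There is, however, a genuine gap in case (4). The step you single out as the main obstacle is not settled by the hypotheses you cite, and the paper passes over it in the same way. The passage from $W$ back to $M$ needs $[Q_{s-a-\frac{1}{2}},G_{a+\frac{1}{2}}]=2L_s+(2a-s+1)T_s$ to act injectively on $M$ for $0\le a\le p$, and likewise $[G_{s-b-\frac{1}{2}},Q_{b+\frac{1}{2}}]=2L_s+(s-2b-1)T_s$ for $0\le b\le q$. Since $L_sM=0$ by \eqref{M}, these operators are $(2a-s+1)T_s$ and $(s-2b-1)T_s$. They vanish when $s=2a+1$ (resp.\ $s=2b+1$), and neither $p,q<s$ nor $p+q+3\in\{s,s+1\}$ rules this out.

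For $(r,s,p,q)=(0,3,1,0)$ one gets $[Q_{\frac{3}{2}},G_{\frac{3}{2}}]=2L_3$, which kills $M$, and Theorem \ref{T-th}(2) in fact fails there. Take $V=\mathbb{C}[h]$ with $L_0=-3h$, $T_3f(h)=f(h-1)$, $T_0$ and $C$ scalars, and all other basis elements of $\mathcal{L}_{3,1,0}$ acting by $0$; this is a simple module satisfying all the hypotheses. It extends to $\mathcal{L}_{3,0,0}$ with $G_{\frac{3}{2}}$ acting by $0$. Hence the nonzero vectors $G_{\frac{3}{2}}\otimes w$ lie in the kernel of the nonzero surjection $Ind^{\mathcal{L}}_{\mathcal{L}_{3,1,0}}(V)\to Ind^{\mathcal{L}}_{\mathcal{L}_{3,0,0}}(V)$, so the induced module is not simple.

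So in case (4) you cannot simply quote Remark \ref{need-irr}; you need an extra ingredient. One option is the minimality of $p,q$ built into Lemma \ref{N}: it excludes the configuration above, since there $G_{\frac{3}{2}}N\subset N$ while $G_{\frac{3}{2}}^2=0$. You would still need a separate reduction for the remaining configurations with $s=2a+1$, $a\le p$, or $s=2b+1$, $b\le q$.
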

{\it Proof.} We prove (1) as an example, as the others can be proved similarly.

 If   $0\le s<r, p+q+3=r+1,$ it is clear that $N$ is a  $\mathcal{L}_{0,p,q}$-module. Since  $V$ is simple and generated by $N$, there exists  the canonical map
 $$\pi: Ind^{\mathcal{L}}_{\mathcal{L}_{0,p,q}}(N)\rightarrow V,\quad \pi(1\otimes v)= v,\quad v\in N.$$
Clearly, $\pi$ is surjective.  Next we only need to show that $\pi$ is also injective.  Let $K=ker(\pi)$. Obviously, $K\cap N=0$.

If $K\ne 0$, then as in
the proof of Lemma \ref{Lb-irr} and Theorem \ref{L-th} (cf. Remark \ref{no-irr} and Remark \ref{need-irr}), we can obtain  a nonzero element in $N$ from any nonzero element in $K$, which yields a contradiction. This proves $K=0$ and $V\cong Ind^{\mathcal{L}}_{\mathcal{L}_{0,p,q}}(N).$ By the property of induced modules, one can see that $N$ is a simple $\mathcal{L}_{0,p,q}$-module.

For $t\in\Z_{\ge 1}$, let $$\mathcal{L}^{(t)}=\sum\limits_{n\ge t}(\C L_n \oplus \C T_n) \oplus \sum\limits_{k\ge t}
\C G_{k+\frac{1}{2}}\oplus \sum\limits_{l\ge t}\C G_{l+\frac{1}{2}}.$$
Clearly,  each $\mathcal{L}^{(t)}$ is a  subalgebra of $\mathcal{L}_{0,p,q}$ (resp. $\mathcal{L}_{d,p,q}$) for all $t\ge \max\{d,p,q\}$.

\begin{rema} For any $0\ne v\in N$, there exists $t\ge \max\{1,p,q\}$ (resp. $t\ge \max\{1,d,p,q\}$) such that $\mathcal{L}^{(t)}v=0$.  One can easily check that $\{v\in N~|~\mathcal{L}^{(t)}v=0\}$  is a $\mathcal{L}_{0,p,q}$-module (resp. $\mathcal{L}_{d,p,q}$-module).
Thus $\{v\in N~|~\mathcal{L}^{(t)}v=0\}=N$. Set $$Ann_{\mathcal{L}_{0,p,q}}(N)=\{x\in \mathcal{L}_{0,p,q}\mid x(N)=0\}\  (resp.\, Ann_{\mathcal{L}_{d,p,q}}(N)=\{x\in \mathcal{L}_{d,p,q}\mid x(N)=0\}).$$ Then $\mathcal{L}^{(t)}\subset Ann_{\mathcal{L}_{0,p,q}}(N)\ \,(resp.\ \mathcal{L}^{(t)}\subset Ann_{\mathcal{L}_{0,p,q}}(N))$. Let
$$\mathcal{L}_{0,p,q}^{(t)}= \mathcal{L}_{0,p,q}/Ann_{\mathcal{L}_{0,p,q}}(N)\ (resp. \,\mathcal{L}_{d,p,q}^{(t)}= \mathcal{L}_{d,p,q}/Ann_{\mathcal{L}_{d,p,q}}(N)).$$ Then
 $\mathcal{L}_{0,p,q}^{(t)}$ \ (resp. \,$\mathcal{L}_{d,p,q}^{(t)}$) is a finite-dimensional solvable Lie superalgebra. Furthermore,
$N$ can be regarded as simple $\mathcal{L}_{0,p,q}^{(t)}$-module (resp. $\mathcal{L}_{d,p,q}^{(t)}$-module).
\end{rema}

Next, we present several equivalent conditions for   simple restricted $\mathcal{L}$-modules.
\begin{theo}\label{local}
Let $V$ be a simple $\mathcal{L}$-module. Then the following conditions are equivalent:

{\rm (1)} $V$ is a restricted $\mathcal{L}$-module;

{\rm (2)} There exists $t\in\Z_{\ge 1}$ such that  $L_i,T_i$  act locally finitely on $V$ for all $i\ge t$;

{\rm (3)} There exists $t\in\Z_{\ge 1}$ such that  $L_i,T_i$ act locally nilpotently on $V$ for all $i\ge t$;

{\rm (4)} There exists $t\in\Z_{\ge 1}$ such that $V$ is a locally finite $\mathcal{L}^{(t)}$-module;

{\rm (5)} There exists $t\in\Z_{\ge 1}$ such that $V$ is a locally nilpotent $\mathcal{L}^{(t)}$-module.

\end{theo}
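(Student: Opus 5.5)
We will prove the cycle of implications $(1)\Rightarrow(5)\Rightarrow(3)\Rightarrow(2)\Rightarrow(1)$, together with $(5)\Rightarrow(4)\Rightarrow(2)$. Several of these steps are formal. The implication $(5)\Rightarrow(3)$ is immediate from the definitions, and so is $(3)\Rightarrow(2)$. For $(5)\Rightarrow(4)$ we use the Remark following the definitions. Although $\mathcal{L}^{(t)}$ is not itself finitely generated, we can argue directly: for $v\in V$, local nilpotency of $\mathcal{L}^{(t)}$ together with the grading will give $\mathcal{L}_jv=0$ for $j\gg0$. Then $U(\mathcal{L}^{(t)})v$ is spanned by monomials in finitely many root vectors of bounded length, hence is finite-dimensional. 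For $(4)\Rightarrow(2)$: each $L_i,T_i$ with $i\ge t$ lies in $\mathcal{L}^{(t)}$, so it acts locally finitely.

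For $(1)\Rightarrow(5)$, fix $v\in V$ and choose $n$ with $\mathcal{L}_jv=0$ for all $j>n$. Take $t\ge1$ and a product $x_1\cdots x_m$ of homogeneous elements of $\mathcal{L}^{(t)}$. Using $[\mathcal{L}_i,\mathcal{L}_j]\subset\mathcal{L}_{i+j}$, we move factors of degree greater than $n$ to the right, where they kill $v$. Every term that survives has total degree $\ge tm/2$ concentrated in a single factor, or else it is killed. A cleaner way to say this is that $U(\mathcal{L}^{(t)})_{>n}v=0$ by a PBW argument. Hence any product of $m>2n$ elements of $\mathcal{L}^{(t)}$ annihilates $v$, which gives (5) with $t=1$.

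The main step is $(2)\Rightarrow(1)$. First we find a nonzero $w$ with $L_iw=T_iw=0$ for all $i$ large, in the style of Mazorchuk--Zhao for the Virasoro algebra. Fix $k\ge t$ and take $0\ne v\in V$. Local finiteness of $L_k$ gives a finite-dimensional $L_k$-stable space $W=\sum_n\C L_k^nv$. Set $W'=\sum_{i\ge t}\C\,$... Concretely, following \cite{MZ2,LPX20}, we consider $X=\sum_{m\ge0}\C L_k^m L_{k+1}^{m'}v$. The relation $[L_k,L_{k+1}]=-L_{2k+1}$ and similar brackets then show that $U(\mathrm{span}\{L_i,T_i:i\ge k\})v$ is finite-dimensional, because every $L_i,T_i$ with $i\ge 2k$ is generated by brackets of $L_k,L_{k+1},T_k,T_{k+1}$. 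On this finite-dimensional module the graded nilpotent algebra $\mathcal{L}^{(2k)}_{\bar0}$ acts. By Engel/Lie theory it has a common eigenvector, and by the grading all its elements act nilpotently, so we get $w\ne0$ killed by $L_i,T_i$ for all $i\ge R$.

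Next we handle the odd part. Since $\{T_i,G_{j+\frac12}\}$ satisfy $[T_m,G_{j+\frac12}]=G_{m+j+\frac12}$, finitely many $G_{j+\frac12}$, $Q_{j+\frac12}$ applied to $w$ span a finite-dimensional space stable under the even positive part. Because $G$'s and $Q$'s square to zero, the space $U(\mathcal{L}^{(R)})w$ is finite-dimensional, and we iterate the eigenvector argument to get $0\ne u$ with $\mathcal{L}_ju=0$ for $j$ large. Finally, $\{x\in V: \mathcal{L}_jx=0 \text{ for } j\gg0\}$ is an $\mathcal{L}$-submodule, because $\mathcal{L}_j\mathcal{L}_ix\subset\mathcal{L}_i\mathcal{L}_jx+\mathcal{L}_{i+j}x$. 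This submodule is nonzero, so by simplicity it equals $V$, which proves (1).

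The expected obstacle is making the finite-dimensionality of $U(\mathrm{span}\{L_i,T_i\}_{i\ge k})v$ rigorous from local finiteness of individual elements only. We will first try the Mazorchuk--Zhao trick: show that $\sum_m\C L_k^mL_{k+1}^nv$ is finite-dimensional, then generate the remaining $L_i,T_i$ by commutators.
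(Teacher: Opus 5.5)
\textbf{There is a genuine gap in the main step $(2)\Rightarrow(1)$.} Your overall skeleton is the same as the paper's: find one nonzero vector killed by $\mathcal{L}_j$ for $j\gg0$, then use that such vectors form a submodule. But the step that produces this vector is not proved, as you acknowledge.

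The bracket relations do not transfer local finiteness from $L_k,L_{k+1},T_k,T_{k+1}$ to the subalgebra they generate. These operators do not commute. So although $\sum\C L_k^mL_{k+1}^{m'}v$ is finite-dimensional, it is not stable under $L_{k+1}$, and the span of all words in them applied to $v$ is not controlled.

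The generation claim is also false. An iterated bracket of $m\ge2$ generators has degree $\ge mk$. In degree $2k$ one only gets $[L_k,L_k]=0$, $[L_k,T_k]=-kT_{2k}$ and $[T_k,T_k]=0$, so $L_{2k}$ is never produced.

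The ``Engel/Lie'' step is unsupported too. Your space is not $L_0$-stable, so the grading forces no nilpotency, and you would still need solvability of the image in $\mathfrak{gl}$.

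The paper avoids all of this by importing Lemma~4.1 of \cite{G}, which directly gives $0\ne v$ and $s\ge t$ with $L_iv=T_iv=0$ for all $i\ge s$. You must either cite such a lemma or actually prove it, for example from an eigenvector of $L_t$ together with the polynomial argument of Theorem~\ref{last}.

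\textbf{The odd step has the same defect.} $G_a^2=0$ does not bound products of distinct odd factors, so finite-dimensionality of $U(\mathcal{L}^{(R)})w$ is unjustified. Lie's theorem is also not available for Lie superalgebras in general.

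Your observation that $T_s^nG_{k+\frac12}v=G_{k+ns+\frac12}v$ makes $\sum_{n\ge0}\C G_{n+k+\frac12}v$ finite-dimensional is exactly the paper's starting point. The missing idea is the collapse:
\begin{itemize}
\item Take a shortest relation $G_{m+\frac12}v+\sum_{i=1}^pa_iG_{m+i+\frac12}v=0$ with $m$ large and $a_p\ne0$.
\item Apply $T_s$ and $L_s$ to it, and subtract $(\frac s2-m-\frac12)$ times the first image from the second.
\item This gives $\sum_{i=1}^p ia_iG_{s+m+i+\frac12}v=0$, a strictly shorter relation, so $p=0$.
\item Then applying $T_i$ for $i\ge s$ kills all $G_{j+\frac12}v$ with $j$ large. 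The same argument works for $Q$.
\end{itemize}

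\textbf{Your $(1)\Rightarrow(5)$ is wrong as stated.} The claim $U(\mathcal{L}^{(t)})_{>n}v=0$ fails. For example, $L_n^2v$ need not vanish, and $T_s^mv\ne0$ for all $m$ when $T_s$ acts injectively as in Lemma~\ref{Tb-irr}. So $t=1$ does not work in general; $t$ must depend on $V$.

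The fix: choose a generator $v_0$ with $\mathcal{L}_jv_0=0$ for $j>n\ge0$, and take $t=n+1$. By induction on $\ell$ (move a factor of degree $>n$ to the right), any product of $\ell$ homogeneous elements of total degree $>\ell n$ kills $v_0$. This gives local nilpotency.

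Your direct $(5)\Rightarrow(4)$ likewise assumes $\mathcal{L}_jv=0$ for $j\gg0$ without proof. Route it through (1) instead.
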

{\it Proof.} $(5)\Rightarrow (3)\Rightarrow (2)$ and  $(4)\Rightarrow (2)$ are clear.
Since $V$ is a simple $\mathcal{L}$-module, $V$ is generated by $N$ (cf. Lemma \ref{N} and Theorem \ref{MainTh}).  Thus, $(1)\Rightarrow (5)$ and  $(1)\Rightarrow (4)$ are  also clear.   So we only need to  prove $(2)\Rightarrow (1)$.

Suppose that $V$ is a simple $\mathcal{L}$-module and there exists $t\in\Z_{\ge 1}$ such that $L_i,T_i$  act locally finitely on $V$ for all $i\ge t$.  By Lemma 4.1 in \cite{G}, there exist
$0\ne v\in V$ and $s\in\Z_{\ge t}$ such that $L_iv=T_iv=0, i\ge s.$

{\bf Claim.}  $G_{m+\frac{1}{2}}v=Q_{m+\frac{1}{2}}v=0, m\ge 2s.$

Fix $k\ge 2s$ and consider the vector space
$$W_k:=\sum\limits_{n\in\Z_{\ge 0}}\C T^n_sG_{k+\frac{1}{2}}v=\sum\limits_{n\in\Z_{\ge 0}}\C G_{ns+k+\frac{1}{2}}v.$$
It follows from  $T_s$ acts locally finitely on $V$ that  $W_k$ is finite-dimensional. Therefore,
$$W:=\sum\limits_{i=k}^{s+k-1}W_i=\sum\limits_{i=k}^{s+k-1}\left(\sum\limits_{n\in\Z_{\ge 0}}\C G_{ns+i+\frac{1}{2}}v\right)=\sum\limits_{n\in\Z_{\ge 0}}\C G_{n+k+\frac{1}{2}}v$$
is also finite-dimensional.  If $W\ne 0$,
then we can choose a minimal $p\in\Z_{\ge 0}$ such that
\begin{equation}\label{e0}G_{m+\frac{1}{2}}v+a_1G_{m+1+\frac{1}{2}}v+\cdots +a_pG_{m+p+\frac{1}{2}}v=0,\end{equation}
for some $m\ge k$ and $a_i\in\C, i=1,\cdots,p$ with $a_p\ne 0$.
Applying $T_s$ and $L_s$ to \eqref{e0}, we have
\begin{equation}\label{contra}\left\{\begin{array}{l}
G_{s+m+\frac{1}{2}}v+a_1G_{s+m+1+\frac{1}{2}}v+\cdots +a_pG_{s+m+p+\frac{1}{2}}v=0,\\
b_0G_{s+m+\frac{1}{2}}v+b_1G_{s+m+1+\frac{1}{2}}v+\cdots +b_pG_{s+m+p+\frac{1}{2}}v=0,\end{array}
\right.
\end{equation}
where $b_0=\frac{s}{2}-(m+\frac{1}{2})$, $b_i=a_i\left(\frac{s}{2}-(m+i+\frac{1}{2})\right),i=1,\cdots,p.$  It  follows from  $m\ge k\ge 2s$ that $b_0\ne 0$. Observe that $b_0a_i-b_i=ia_i, i=1,\cdots,p.$ Then from \eqref{contra}, we obtain
$$a_1G_{s+m+1+\frac{1}{2}}v+2a_2G_{s+m+1+\frac{1}{2}}v+\cdots +pa_pG_{s+m+p+\frac{1}{2}}v=0.$$
To avoid  a contradiction, we must have $p=0$. Thus $W=0$ and  $G_{m+\frac{1}{2}}v=0, m\ge 2s.$

Similarly, one can show that $Q_{m+\frac{1}{2}}v=0, m\ge 2s.$    This proves the Claim.

Now there exists  $0\ne v\in V$ such that
$$L_iv=T_iv=G_{j+\frac{1}{2}}v=Q_{j+\frac{1}{2}}v=0,\quad i\ge s, j\ge 2s.$$

Note that $V$ is simple and $V=U(\mathcal{L})v$. By the PBW Theorem, one can easily see that $V$ is a restricted $\mathcal{L}$-module.

Finally, using the techniques developed in \cite{MNTZ}, we can simplify the conditions in Theorem \ref{local}.

\begin{theo}\label{last} Let $V$ be a simple $\mathcal{L}$-module. If there exists $p\in\Z_{\ge 1}$ such that $L_p$ acts locally finitely on $V$, then $V$ is a restricted module.
\end{theo}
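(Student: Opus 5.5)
The plan is to reduce to Theorem \ref{local}, condition (2): we show that $L_i$ and $T_i$ act locally finitely on $V$ for all sufficiently large $i$. Then Theorem \ref{local} gives that $V$ is restricted. Following the method of \cite{MNTZ} for the Virasoro algebra, we proceed in three stages. First, we pass from local finiteness of $L_p$ to the existence of a nonzero vector annihilated by suitable elements. Next, we propagate this to all large $L_i$. Finally, we handle $T_i$, $G$ and $Q$ through the brackets.

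\emph{Step 1.} Since $L_p$ acts locally finitely and $V$ is simple (so $C$ acts as a scalar and the countable-dimension Schur lemma applies), choose $0\ne v\in V$ that is an eigenvector of $L_p$: $L_pv=\lambda v$. Set $W=\sum_{n\ge 0}\C L_p^n U v$ for a suitable finite-dimensional subspace $U$ of $U(\mathcal{L})v$. The key point of \cite{MNTZ} is that the space $\mathcal{L}_{\ge p}$-invariants can be controlled by the identity $\operatorname{ad}(L_p)^{n}$ acting on $L_{j}$, which produces $L_{j+np}$ with nonzero coefficients $\prod(j+kp-p)$ for $j$ generic. For $x\in\{L_j,T_j,G_{j+\frac12},Q_{j+\frac12}\}$ with $j$ in a fixed residue class mod $p$, the vectors $x_{j+np}v$ are obtained as linear combinations of $L_p^{a}x_jL_p^{b}v$. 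Hence the span $\sum_n \C x_{j+np}v$ lies in $\sum_{a}L_p^a x_j F$, where $F=\sum_b\C L_p^bv$ is finite-dimensional. We then apply the finite-dimensional trick already used in the Claim of Theorem \ref{local}. Along each residue class, the span of $\{x_{m}v\}_{m\ge k}$ is finite-dimensional modulo the action of $L_p$. A minimal linear relation $\sum a_i x_{m+ip}v=0$, combined with an application of $L_p$ and the distinct weights $\frac{p}{2}-(m+ip+\frac12)$ (resp. $-(m+ip)$ for $T$, and $m-(m+ip)$ for $L$), forces the relation to shrink. This gives $x_mv=0$ for $m\gg0$.

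\emph{Step 2.} The subtle point is that $L_pv=\lambda v$ alone does not make $\sum_n\C x_{j+np}v$ finite-dimensional. We need that $U(\C L_p)$ acting on the finite-dimensional space $\C x_jv+\cdots$ stays finite-dimensional. We would follow \cite{MNTZ}: consider the subalgebra $\mathfrak{v}_p=\operatorname{span}\{L_{np}\}$ (a copy of Virasoro) and show, using $\operatorname{ad}L_p$ locally nilpotent on $\mathcal{L}$ modulo degree shifts, that $L_p$ acts locally finitely on $U(\mathcal{L})v$ with the filtration by degree. Concretely, $L_p$ acting on $x v$ equals $[L_p,x]v+x\lambda v$, so $\sum_n\C L_p^n(xv)$ finite-dimensional implies $\sum_n \C(\operatorname{ad}L_p)^n(x)v$ finite-dimensional. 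For $x=L_j$, $T_j$, $G_{j+\frac12}$, $Q_{j+\frac12}$ with suitable $j$, the elements $(\operatorname{ad}L_p)^n x$ are nonzero multiples of $x_{j+np}$. Running over $j$ in a full residue system gives finite dimensionality of $\sum_{m\ge k}\C x_mv$, and Step 1's argument finishes.

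\emph{Step 3.} Once we have $0\ne v$ with $L_iv=T_iv=G_{i+\frac12}v=Q_{i+\frac12}v=0$ for $i\gg0$, simplicity gives $V=U(\mathcal{L})v$, and PBW shows that $V$ is restricted, exactly as at the end of the proof of Theorem \ref{local}. The main obstacle is Step 2. There, one must ensure that the coefficients in $(\operatorname{ad}L_p)^n x_j$ never vanish; this requires choosing $j$ large, e.g. $j>p$, and avoiding the cases where $p/2-(j+\frac12)+np$ type factors hit zero. One must also ensure that the residue-class decomposition covers all large indices. For the $T$-family, the coefficient is $-(j+np)\ne0$, which is harmless. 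For $G$ and $Q$, the coefficient $\frac{p}{2}-j-np-\frac12$ never vanishes once $j\ge p$.
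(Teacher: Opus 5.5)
Your Step 2 observation is correct and matches the paper's starting point. Since $L_pv=\lambda v$, one has $(L_p-\lambda)x_kv=[L_p,x_k]v$, so each $\sum_n\C x_{j+np}v$ lies in the finite-dimensional space $\sum_a\C L_p^ax_jv$. But this is the easy part.

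\textbf{The gap is the shrinking step in Step 1.} Write $[L_p,x_k]=\gamma_kx_{k+p}$. Apply $L_p$ to a minimal relation $\sum_{i=0}^na_ix_{m+ip}v=0$; the $\lambda$-terms cancel and you get only $\sum_{i=0}^na_i\gamma_{m+ip}x_{m+(i+1)p}v=0$. This is a relation of the same length on the window shifted by $p$. Each of the two relations has a nonzero end coefficient outside the other's window, so no combination of them is shorter.

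The trick in the Claim of Theorem \ref{local} works only because two operators, $T_s$ and $L_s$, both \emph{annihilate} $v$ and shift by the same amount with non-proportional coefficients. That produces two different relations on the same window. Here you have a single weighted shift, and $v$ is merely an eigenvector. The data you use are consistent with no vanishing at all: take $x_{j+np}v=\alpha_nw$ with $L_pw=(\lambda+\mu)w$, $\mu\neq0$, and $\alpha_{n+1}=\mu\alpha_n/\gamma_{j+np}$. Propagating a relation along a residue class is exactly what the paper records in \eqref{poly+}; it is the start of the argument, not the conclusion.

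\textbf{What is missing} is an operator other than $L_p$ that kills $v$. The paper gets it from Lemma 3.10 of \cite{MNTZ}: there is $K_L$ with $L_iv=0$ for all $i\ge K_L$. This is a substantial Virasoro-algebra result, and your weighted-shift argument does not reproduce it. So your claim that Step 1 yields $L_mv=0$ for $m\gg0$ is unsupported.

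With this input, the paper handles $T$ (and similarly $G$, $Q$) as follows.
\begin{itemize}
\item It takes a polynomial $P_j$ with $P_j(L_p)T_{j+ap}v=0$ for all $a\ge s_j$, and chooses $rp>\max\{K_L,s_jp\}$.
\item It compares $P_j(L_p)T_{j+2rp}v=0$ with $[L_{rp},P_j(L_p)T_{j+rp}]v=0$. The latter holds because $L_{rp}v=0$ and $P_j(L_p)T_{j+rp}v=0$.
\item These are two relations on the same window $T_{j+2rp}v,\dots,T_{j+(2r+n_j)p}v$. Their top-coefficient ratio $c_{n_j}/b_{n_j}$ tends to infinity with $r$.
\item Eliminating gives a polynomial of smaller degree that annihilates $T_{j+ap}v$ for all large $a$. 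Iterating forces $T_{j+np}v=0$ for $n\gg0$.
\end{itemize}
Your proposal needs both ingredients: the annihilation $L_iv=0$ for large $i$, and the use of such $L_{rp}$ to produce a second relation on the same window.

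Minor points:
\begin{itemize}
\item Your announced reduction to condition (2) of Theorem \ref{local} is not what your steps establish. They concern one vector, not local finiteness on all of $V$, although Step 3 makes the reduction unnecessary.
\item The $L$-coefficient is $[L_p,L_k]=(p-k)L_{p+k}$, not $m-(m+ip)$.
\end{itemize}
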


{\it Proof.}  Since $L_p$ acts locally finitely on $V$, there exist $\xi\in\C$  and $0\ne v\in V$ such that
$L_pv=\xi v$. Then by Lemma 3.10 in \cite{MNTZ}, there exists $K_L\in\Z_{>p}$ such that
\begin{equation}\label{L}L_iv=0,\ i\ge K_L.
\end{equation}

Fix $j\in\{1,2,\ldots,p\}$, we want to show that there exists $N_j\in\Z_{\ge 1}$ such that
\begin{equation}\label{j}T_{j+np}v=0,\ n\ge N_j.
\end{equation}

 Since $dim(\sum\limits_{n\in\Z_{\ge 0}}\C L^n_pT_{j}v)<\infty$ and $(L_p-\xi)T_{j}v=-jT_{j+p}v,$
there exists the smallest $n_j\in\Z_{\ge 0}$ such that
$$T_{j+s_jp}v,\ T_{j+(s_j+1)p}v,\ \ldots,\ T_{j+(s_j+n_j)p}v$$
are linearly dependent for some $s_j\in\Z_{\ge 0}.$ Hence, there exists a polynomial
$$P_j(x)=a_0^j+a_1^j(x-\xi)+\cdots + a^j_{n_j}(x-\xi)^{n_j},\quad a_0^ja^j_{n_j}\ne 0$$
such that
\begin{equation}\label{poly}P_j(L_p)T_{j+s_jp}v=0.
\end{equation}
Applying $L_p-\xi$ to \eqref{poly} repeatedly, we can obtain that
\begin{equation}\label{poly+}P_j(L_p)T_{j+ap}v=0,\quad a\ge s_j.
\end{equation}

Denote by $\partial(P_j(x))$ the degree of $P_j(x)$. If $\partial(P_j(x))=0$, we are done. Now assume that $\partial(P_j(x))>0$.
 By considering $P_j(L_p)T_{j+2rp}v$ and $[L_{rp},P_j(L_p)T_{j+rp}]v$, where $rp>\max\{K_L, s_jp\}$, we deduce from \eqref{L} and \eqref{poly+} that
 \begin{eqnarray}\label{b}&&b_0T_{j+2rp}v+b_1T_{j+(2r+1)p}v+\cdots +b_{n_j}T_{j+(2r+n_j)p}v=0,\\
&&\label{c}c_0T_{j+2rp}v+c_1T_{j+(2r+1)p}v+\cdots +c_{n_j}T_{j+(2r+n_j)p}v=0,
 \end{eqnarray}
where
\begin{eqnarray*}&&b_0=a_0^j ,\quad b_i=(-1)^{i}a_i^j\prod_{l=0}^{i-1}\left(j+(2r+l)p\right),i=1,\ldots,n_j,\\
&&c_0=-(j+rp)a_0^j, \ \ c_i=(-1)^{i+1}a_i^j\prod_{l=0}^{i}\left(j+(r+l)p\right),i=1,\ldots,n_j.
\end{eqnarray*}
Note that $\lim\limits_{r\rightarrow \infty}\frac{c_{n_j}}{b_{n_j}}=\infty$. For a sufficiently large $r$, solving equations  \eqref{b} and \eqref{c}, one can obtain  a polynomial $Q^*_j(x)$  such that
$\partial(Q^*_j(x))<\partial(P_j(x))$ and $Q^*_j(L_p)T_{j+2rp}v=0$. Write $Q^*_j(x)=(x-\xi)^{m}Q_j(x)$, where $Q_j(\xi)\ne 0, m\in\Z_{\ge 0}$. Then we have
\begin{equation}\label{r+m}Q_j(L_p)T_{j+(2r+m)p}v=0.
\end{equation}  Applying $L_p-\xi$ to \eqref{r+m} repeatedly,
we obtain that
$$Q_j(L_p)T_{j+ap}v=0,\quad a\ge 2r+m.$$
If $\partial(Q_j(x))=0$, we are done. If $\partial(Q_j(x))>0$, iterate the above process. Consequently, there exists  $N_j\in \Z_{\ge 1}$ such that $\eqref{j}$ hold.

Now set $K_T=\max\{(N_1+1)p,(N_2+2)p, \ldots, (N_p+1)p\}$. Then we have
\begin{equation}\label{T}T_{i}v=0,\ i\ge K_T.
\end{equation}

 Similarly, one can show that there exist  $K_G, K_Q\in\Z_{>0}$ such that
 \begin{equation}\label{GQ}G_{i+\frac{1}{2}}v=0,\ i\ge K_G,\quad Q_{j+\frac{1}{2}}v=0,\ j\ge K_Q.
\end{equation}

Set $t=\max\{K_L,K_T,K_G,K_Q\}$. Then from \eqref{L}, \eqref{T} and \eqref{GQ}, we have
\begin{equation}\label{LTG}L_iv= T_iv=G_{i+\frac{1}{2}}v= Q_{i+\frac{1}{2}}v=0,\ i\ge t
\end{equation} Since $V$ is simple, $V=U(\mathcal{L})v$. From the PBW Theorem and \eqref{LTG}, it is straightforward to  see that  $V$ is a restricted $\mathcal{L}$-module.

 \vs{8pt}

\noindent {\bf Data availability}  No data was used for the research described in this article.

\end{document}